\documentclass[a4paper,12pt]{amsart}
\usepackage{amsmath, amssymb, amsthm, xcolor, enumerate}
\usepackage{hyperref}
\usepackage{tikz-cd}
\usepackage[all]{xy}
\usepackage{cancel}
\usepackage[utf8]{inputenc}
\usepackage{csquotes}
\usepackage[english]{babel}
\usepackage{comment}

\newcounter{theorem}
\newtheorem{theorem}[theorem]{Theorem}
\newtheorem{lemma}[theorem]{Lemma}
\newtheorem{prop}[theorem]{Proposition}
\newtheorem{cor}[theorem]{Corollary}

\theoremstyle{definition}
\newtheorem{definition}[theorem]{Definition}

\newtheorem{question}[theorem]{Question}

\theoremstyle{remark}
\newtheorem*{remark*}{Remark}
\newtheorem{remark}[theorem]{Remark}

\numberwithin{equation}{section}
\allowdisplaybreaks

\newcommand{\Z}{\mathcal Z}
\newcommand{\C}{\mathrm{C}^*}

\newcommand{\linear}{\mathrm{linear}}

\newcommand{\Aff}{\mathrm{Aff}}

\newcommand{\Ad}{\operatorname{Ad}}

\newcommand{\KK}{\operatorname{KK}}
\newcommand{\UCT}{\operatorname{UCT}}

\newcommand{\nuc}{\operatorname{nuc}}
\newcommand{\Ann}{\operatorname{Ann}}

\title[]{Inclusions of real rank zero}
\author[J.\ Gabe]{James Gabe}
\address{\hskip-\parindent James Gabe, Department of Mathematics and Computer Science, University of Southern Denmark, Campusvej 55, 5230 Odense, Denmark}
\email{gabe@imada.sdu.dk}
\author[R.\ Neagu]{Robert Neagu}
\address{\hskip-\parindent Robert Neagu, Department of mathematics, KU Leuven, Celestijnenlaan 200B, 3001, Leuven, Belgium.}
\email{robert.neagu@kuleuven.be}

\begin{document}
\maketitle

    \begin{abstract}
We introduce a notion of real rank zero for inclusions of $\C$-algebras. After showing that our definition has many equivalent characterisations, we offer a complete description of the commutative case. We provide permanence and $K$-theoretic properties of inclusions of real rank zero and prove that a large class of full infinite inclusions have real rank zero. 
	\end{abstract}

\numberwithin{theorem}{section}

\section*{Introduction}
\renewcommand*{\thetheorem}{\Alph{theorem}}

The real rank of $\C$-algebras was introduced by Brown and Pedersen in \cite{rr0} as a non-commutative analogue of topological dimension. Particularly, the notion of real rank zero distinguished itself due to the fact that many interesting classes of $\C$-algebras were shown to have this property.\ To mention a few, simple purely infinite $\C$-algebras (\cite{zhangRR0}), irrational rotation algebras (\cite{IrrRotRR0,EllEv93}), AF-algebras (\cite{rr0}), and von Neumann algebras (\cite{rr0}) have real rank zero. Subsequently, real rank zero started to appear in the context of the classification programme of simple nuclear $\C$-algebras. Most notably in \cite{RR0Classif}, real rank zero facilitated access to classification by $K$-theory for large classes of $\C$-algebras.

Real rank zero was characterised in striking ways in \cite{rr0}, and is often thought of as the property of having a rich supply of projections. More precisely, a  $\C$-algebra has real rank zero exactly when all of its hereditary $\C$-subalgebras have an approximate unit consisting of projections.

On the other hand, the study of inclusions in operator algebras is ubiquitous in the last four decades. In the case of von Neumann algebras we can mention Jones' celebrated theory of subfactors (\cite{jonessubfactors}), or the Cartan subalgebras of Feldman--Moore (\cite{FeldmanMoore,FeldmanMooreII}). On the $\C$-algebraic side, inclusions appear in many concrete realisations, such as the AF core of a Cuntz-Krieger algebra (\cite{CuntzKrieger}) or Cartan pairs induced by an étale groupoid (cf.~\cite{Renault}), and more abstractly in Watatani's index theory (\cite{Watatani}) or Rørdam's $\C$-irreducible inclusions (\cite{RordamIrr}).\ 
 Inclusions also appear implicitly in the Elliott classification programme, as one often classifies inclusions of simple nuclear $\C$-algebras up to approximate unitary equivalence 
(\cite{O2stableclassif, ChrisClassif, oinftyclass, DynamicalKP, GLN23, classif}). 

We introduce the following notion of real rank zero for inclusions of $\C$-algebras.

\begin{definition}[Definition \ref{defn: RR0inclusions}]
We say that an inclusion of $\C$-algebras $A\subseteq B$ has \emph{real rank zero} if for any nonzero positive element $a\in A$, the hereditary $\C$-subalgebra $\overline{aBa}$ of $B$ has an approximate unit of projections.
\end{definition} 
In fact, as in the case of real rank zero for $\C$-algebras, the counterpart notion for inclusions enjoys a series of equivalent characterisations. For ease of notation, let us state the result in the unital case.

\begin{theorem}[Theorem \ref{thm: eqdefunital}]
Let $A\subseteq B$ be an inclusion of unital $\C$-algebras such that $1_B\in A$. Then the following are equivalent:
\begin{enumerate}
\item the inclusion $A\subseteq B$ has real rank zero;

\item any self-adjoint in $A$ can be approximated arbitrarily well by self-adjoints in $B$ with finite spectrum;

\item any self-adjoint in $A$ can be approximated arbitrarily well by invertible self-adjoints in $B$.
\end{enumerate}
\end{theorem}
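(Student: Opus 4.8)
The plan is to prove the cycle $(i)\Rightarrow(ii)\Rightarrow(iii)\Rightarrow(i)$, exploiting throughout one structural feature of the relative setting: the hypotheses will only ever be applied to elements manufactured by continuous functional calculus from a fixed self-adjoint (or positive) element of $A$, so that these stay inside $A$, while all approximants and all projections are allowed to live in $B$. The implication $(ii)\Rightarrow(iii)$ is the soft one. Given a self-adjoint $x\in A$ and $\epsilon>0$, choose a self-adjoint $y\in B$ with finite spectrum and $\|x-y\|<\epsilon/2$; writing $y=\sum_i\lambda_i p_i$ with pairwise orthogonal spectral projections $p_i\in \C(y,1_B)\subseteq B$, one perturbs by at most $\epsilon/2$ every eigenvalue too close to $0$ and absorbs the complementary projection $1_B-\sum_i p_i$ into a small nonzero scalar, producing an invertible self-adjoint within $\epsilon$ of $x$.

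For $(i)\Rightarrow(ii)$ I would first reduce to approximating a positive element, replacing $x$ by $x+\|x\|1_B\in A_+$ (here $1_B\in A$ is used) and shifting back by $\|x\|1_B$ at the end. Given $a\in A_+$ and $\epsilon>0$, set $a_k=(a-k\epsilon)_+\in A_+$ for $k=0,\dots,m$ with $m\epsilon\geq\|a\|$; these lie in $\C(a)\subseteq A$ and generate a decreasing chain $\overline{a_0Ba_0}\supseteq\overline{a_1Ba_1}\supseteq\cdots$. Using the approximate units of projections supplied by $(i)$, I would select projections $p_k\in\overline{a_kBa_k}$ that are approximate units for $a_{k+1}$ and for the previously chosen projection, and then invoke the standard perturbation lemma for close projections to arrange an honest nesting $p_0\geq p_1\geq\cdots\geq p_m=0$. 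The finite-spectrum element $y=\epsilon\sum_k p_k\in B$ then satisfies $\|y-a\|=O(\epsilon)$, since $\epsilon\sum_k\chi_{(k\epsilon,\infty)}(a)$ approximates $a$ and each $p_k$ approximates the corresponding spectral projection. The points needing care are the cumulative error control over the $m\approx\|a\|/\epsilon$ steps and the exact nesting; crucially, here the projections are handed to us already inside the prescribed hereditary subalgebras, so no alignment issue arises.

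The implication $(iii)\Rightarrow(i)$ is where the real difficulty lies, and it is the relative analogue of Brown--Pedersen's theorem that density of invertible self-adjoints forces real rank zero. Fix $a\in A_+$ and $\delta>0$; it suffices to produce a projection in $\overline{aBa}$ acting as an approximate unit on $(a-\delta)_+$, as the truncations exhaust $\overline{aBa}$. Applying $(iii)$ to $a-\tfrac\delta2 1_B\in A$ (again using $1_B\in A$) yields an invertible self-adjoint $b\in B$ with a genuine spectral gap at $0$, and the projection $p=\chi_{(0,\infty)}(b)\in \C(b,1_B)\subseteq B$ is an approximate unit for $(a-\delta)_+$; this is verified by a norm estimate carried out inside the commutative algebra $\C(b,1_B)$ and transported to $a$ via the bound on $\|b-(a-\tfrac\delta2 1_B)\|$.

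The serious obstacle, and the step I expect to be the main technical hurdle, is that $p$ need \emph{not} lie in $\overline{aBa}$: the spectral projections of $b$ may be tilted relative to $a$ (already in $M_2$ a rank-one projection can be a unit for a rank-one positive element without being comparable to it inside the hereditary subalgebra it generates), so a single application of $(iii)$ does not deposit a projection in the prescribed hereditary subalgebra. Overcoming this is the heart of the argument. I would iterate the gap-creation while compressing successively by the spectral projections produced, passing at each stage to a corner and re-applying the hypothesis there; this forces one first to establish a permanence statement, namely that real rank zero of the inclusion is inherited by compressions by projections of $A$, so that $(iii)$ remains available after leaving $A$. This inductive alignment, producing projections that genuinely sit inside $\overline{aBa}$, is precisely what makes the relative theorem more than a formal consequence of the classical Brown--Pedersen equivalences.
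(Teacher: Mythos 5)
Your implication (ii)$\Rightarrow$(iii) is fine, and your plan for (i)$\Rightarrow$(ii) is workable in outline: you correctly apply hypothesis (i) only to the truncations $(a-k\epsilon)_+\in A$, and exact nesting can indeed be arranged by standard (if cascading) perturbations since the number $m\approx\|a\|/\epsilon$ of steps is fixed before the tolerances are chosen. However, your stated justification of the final estimate is not right: "each $p_k$ approximates the corresponding spectral projection" is false in norm ($\chi_{(k\epsilon,\infty)}(a)$ need not exist in $B$, and $p_k$ need not be close to it), and naive term-by-term bookkeeping accumulates an error of order $m\epsilon\approx\|a\|$. What actually closes this step is an operator sandwich in $B''$: membership $p_k\in\overline{(a-k\epsilon)_+B(a-k\epsilon)_+}$ forces $p_k\le\chi_{(k\epsilon,\infty)}(a)$, while \emph{exactly} fixing $(a-(k+1)\epsilon)_+$ forces $p_k\ge\chi_{((k+1)\epsilon,\infty)}(a)$, and summing these inequalities gives $a-2\epsilon\le\epsilon\sum_k p_k\le a+\epsilon$. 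Since approximate fixing does not pass to support projections, the tolerances and spectral levels need genuine care here; this is precisely why the paper routes through its condition (iv), makes all relations exact in the sequence algebra $B_\infty$, and lifts back to $B$ by semiprojectivity of $\mathbb{C}^n$.

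The genuine gap is in (iii)$\Rightarrow$(i), which you yourself identify as the heart of the theorem. Your proposed strategy --- iterate the gap-creation while compressing by the spectral projections produced, supported by a permanence statement that real rank zero of the inclusion passes to compressions by projections of $A$ --- cannot get off the ground: the projections produced by applying (iii) are spectral projections of elements of $B$ and lie in $B$, never in $A$ (which may well be projectionless), so the permanence statement you propose is never applicable to them; moreover, once you compress by a projection $p\in B$, the inclusion structure is destroyed ($pAp$ is not an algebra, and there is no pair in the corner $pBp$ to which hypothesis (iii) could be re-applied). The missing idea is the alignment device that the paper deploys in its proof of (ii)$\Rightarrow$(i): having found the "tilted" projection $q\in B$ (a spectral projection of the approximant $b$ supported in $[\delta,1]$), one pushes it into the prescribed hereditary subalgebra by forming $z=a^{1/n}qa^{1/n}\in\overline{aBa}$. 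The point is that $q$ lives approximately where $a$ is bounded below by $\delta$, so for large $n$ the element $a^{1/n}$ acts on $q$ almost as the identity, whence $\|z-q\|\le 3\delta$; then $\|z-z^2\|\le 9\delta$, so $\sigma(z)$ has a gap at $1/2$, and $p=\chi_{[1/2,1]}(z)$ is an honest projection which lies in $\overline{aBa}$ (because $z$ does) and still acts as an approximate unit on $a$. A variant of the same trick (conjugating $\chi_{(0,\infty)}(b)$ by $g(a)$ for a suitable bump function $g$ vanishing at $0$, using that $pb_-=0$ forces $p$ to approximately annihilate the low spectral part of $a$) would repair your direct route from an invertible approximant of $a-\tfrac{\delta}{2}1_B$; but without this step, or an equivalent one, your cycle is not closed, and no iteration of (iii) alone will deposit a projection inside $\overline{aBa}$.
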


For commutative $C^\ast$-algebras, real rank zero is characterised by the spectrum of the $\C$-algebra being  totally disconnected. The following gives a similar characterisation for inclusions of commutative $\C$-algebras.  

\begin{theorem}[Theorem \ref{thm: commutativecase}]\label{thm: IntroComm}
Let $A\subseteq B$ be an inclusion of unital commutative $\C$-algebras with $1_B\in A$. The inclusion $A\subseteq B$ has real rank zero if and only if there exists an intermediate commutative $\C$-algebra with totally disconnected spectrum.    
\end{theorem}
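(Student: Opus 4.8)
The plan is to pass to the topological picture via Gelfand duality and reduce the statement to a factorisation problem. Write $A=C(X)$ and $B=C(Y)$ for compact Hausdorff spaces $X,Y$; since $1_B\in A$ and the inclusion is unital and injective, it is dual to a continuous surjection $\pi\colon Y\to X$. An intermediate unital commutative $\C$-subalgebra $A\subseteq C\subseteq B$ then corresponds to a factorisation $Y\xrightarrow{q}Z\xrightarrow{p}X$ of $\pi$ through a compact Hausdorff space $Z$ with $C=C(Z)$, and $C$ has totally disconnected spectrum precisely when $Z$ is totally disconnected, equivalently when $C$ is generated as a $\C$-algebra by its projections. Recall finally that a self-adjoint element of $B=C(Y)$ has finite spectrum exactly when it is a finite real-linear combination $\sum_i c_i 1_{D_i}$ of indicator projections $1_{D_i}$ of clopen sets $D_i\subseteq Y$.

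For sufficiency, suppose such an intermediate $C=C(Z)$ exists with $Z$ totally disconnected. Then $C$ has real rank zero in the usual sense of \cite{rr0}, so every self-adjoint $a\in A\subseteq C$ is approximated arbitrarily well by self-adjoints of finite spectrum lying in $C\subseteq B$; by the equivalence of (i) and (ii) in Theorem \ref{thm: eqdefunital} the inclusion $A\subseteq B$ has real rank zero. For necessity, assume $A\subseteq B$ has real rank zero. Using again (ii) of Theorem \ref{thm: eqdefunital}, every self-adjoint $a\in A$ admits, for each $\varepsilon\in\{1/n:n\in\N\}$, a finite-spectrum self-adjoint $g\in B$ with $\|a-g\|<\varepsilon$, and writing $g=\sum_i c_i 1_{D_i}$ records a finite family of clopen sets $D_i\subseteq Y$. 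Let $\mathcal D$ be the collection of all clopen sets arising in this way as $a$ ranges over the self-adjoint part of $A$ and $\varepsilon$ over $\{1/n:n\in\N\}$, and let $C\subseteq B$ be the $\C$-subalgebra generated by $\{1_D:D\in\mathcal D\}$. Each approximant $g$ lies in $C$ by construction, and since $C$ is closed this forces $a\in C$ for every self-adjoint $a\in A$, hence $A\subseteq C$; as $C$ is generated by projections, its spectrum is totally disconnected, so $A\subseteq C\subseteq B$ is the desired intermediate algebra.

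Both implications are driven entirely by the characterisation in Theorem \ref{thm: eqdefunital}, so I expect the only genuine work to be in the necessity direction, namely checking that the algebra $C$ generated by the accumulated projections simultaneously contains $A$ and has totally disconnected spectrum. The tension here is only apparent: although $X$ may well be connected, the spectrum $Z$ of $C$ is a totally disconnected space mapping onto $X$, and the functions pulled back from $A$ become approximable by locally constant functions on $Z$ precisely because real rank zero of the inclusion supplies enough clopen subsets of $Y$ to separate any two points of $Y$ with distinct images under $\pi$. The two standard ingredients I will invoke from \cite{rr0} are that $C(Z)$ has real rank zero when $Z$ is totally disconnected, and conversely that a commutative $\C$-algebra generated by its projections has totally disconnected spectrum.
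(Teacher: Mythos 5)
Your proof is correct, and the necessity direction takes a genuinely different route from the paper's. The paper argues on the topological side: using condition (iii) of Theorem \ref{thm: eqdefunital} (invertible self-adjoint approximants), it shows that the map $\phi\colon Y\to X$ dual to the inclusion is constant on each connected component of $Y$ --- if $\phi$ took two distinct values on a component, an invertible approximant of $\phi^*(f)$ for a suitable $f$ would have to vanish somewhere on that component by connectedness. The intermediate algebra is then $\pi^*(C(Y_c))$, where $Y_c$ is the space of connected components of $Y$ with the quotient topology; this forces the paper to verify that $Y_c$ is compact, Hausdorff and totally disconnected (using that components coincide with quasicomponents in compact Hausdorff spaces). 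You instead use condition (ii) (finite-spectrum approximants), harvest the spectral projections $1_{D_i}$ of all approximants, and let $C$ be the $\C$-subalgebra of $B$ they generate; then $A\subseteq C$ because $C$ is closed and contains every approximant, and $C$ has totally disconnected spectrum because it is generated by commuting projections. Your route avoids the point-set topology of the quotient $Y_c$ entirely, at the price of producing a smaller, non-canonical intermediate algebra; what the paper's construction buys is the explicit factorisation $\phi=\psi\circ\pi$ through the component space, which is reused verbatim in the equivariant setting of Theorem \ref{thm: RR0CrossedP}, where your $C$ would not obviously serve. One small caveat: the fact that a unital commutative $\C$-algebra generated by projections has totally disconnected spectrum is not literally a statement of \cite{rr0}, so you should justify it --- either directly (finite products of commuting projections are projections, so the linear span of projections is dense in $C$; every projection in $C(Z)$ is constant on each connected component of $Z$, and since $C(Z)$ separates the points of $Z$, every component is a singleton), or by noting that density of finite-spectrum self-adjoints gives real rank zero via Theorem \ref{thm: RR0Alg} and then invoking \cite[Proposition 1.1]{rr0}. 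This is a routine addition, not a gap.
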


Furthermore, there are natural inclusions arising from dynamics which have real rank zero. The canonical inclusion $\C_r(G)\subseteq \ell^{\infty}(G)\rtimes_r G$ has real rank zero (Theorem \ref{roealgebraex}) for any countable discrete exact group $G$. This example is particularly intriguing since real rank zero seems to be quite a rare phenomenon both for group $\C$-algebras and for uniform Roe algebras.\footnote{We invite the reader to see the discussion in {\cite[Section 3]{RRRoeAlg}}. For example, the uniform Roe algebra of $SL(n,\mathbb{Z})$ does not have real rank zero if $n\geq 3$.} 

Next, we consider the class of inclusions recently classified in \cite{classif}.\ In the tracial setting, real rank zero of an inclusion between stabilised unital $\C$-algebras can be characterised by the canonical pairing between $K$-theory and traces. We refer the reader to Section \ref{sect: Kth} for the relevant definitions. 

\begin{theorem}[see Theorems \ref{thm: traceimpliesRR0} and \ref{thm: RR0impliestrace}]
Let $A\subseteq B$ be an inclusion of unital and separable $\C$-algebras such that $1_B\in A$.\ Suppose that $A$ and $B$ are simple, exact, stably finite, and $\Z$-stable.\ Then the inclusion $A\otimes\mathcal{K}\subseteq B\otimes\mathcal{K}$ has real rank zero if and only if the image of the induced map $\gamma^*:\Aff(T(A))\to\Aff(T(B))$ is contained in $\overline{\rho_B(K_0(B))}$.  
\end{theorem}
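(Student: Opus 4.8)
The plan is to prove the two implications separately, matching the two cited theorems; in both directions the bridge between the analytic condition (real rank zero) and the $K$-theoretic one (the position of $\gamma^*(\Aff(T(A)))$) is the rank function $d_\tau(a)=\lim_n\tau(a^{1/n})$ on positive elements. The two standing facts I would invoke throughout are: since $A$ and $B$ are simple, exact, stably finite and $\Z$-stable, they have strict comparison and their Cuntz semigroups split as $V\sqcup\mathrm{LAff}_{++}$, so projections are detected by their traces and $\rho_B([p])\colon\tau\mapsto\tau(p)=d_\tau(p)$; and for a projection $p$ in the hereditary subalgebra $\overline{a(B\otimes\mathcal K)a}$ one automatically has $p\precsim a$, hence $d_\tau(p)\le d_\tau(a)$ for every $\tau\in T(B)$. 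The observation linking everything is that for positive $a\in A\otimes\mathcal K$ the function $\tau\mapsto d_\tau(a)$ on $T(B)$ factors through restriction to $A$, i.e.\ equals $\gamma^*$ of the rank function of $a$ computed in $A$; this is exactly why an inclusion condition on $A$ produces a condition on the image of $\gamma^*$.

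For the direction \emph{real rank zero $\Rightarrow$ trace condition}, I would start from a strictly positive $f\in\Aff(T(A))$ and use $\Z$-stability of $A$ to realise it as a rank function: pick $a\in(A\otimes\mathcal K)_+$ with $d_\sigma(a)=f(\sigma)$ for all $\sigma\in T(A)$. Applying real rank zero of the inclusion to $a$ yields, for each $\epsilon>0$, a projection $p\in\overline{a(B\otimes\mathcal K)a}$ that is an approximate unit on $(a-\epsilon)_+$; combining $p\precsim a$ with $(a-\epsilon)_+\precsim p$ sandwiches $\tau(p)$ between $d_\tau((a-\epsilon)_+)$ and $d_\tau(a)=\gamma^*(f)(\tau)$. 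Using continuity of the rank functions of the cut-downs together with strict comparison to make the convergence uniform, $\rho_B([p])$ approximates $\gamma^*(f)$ in $\Aff(T(B))$, so $\gamma^*(f)\in\overline{\rho_B(K_0(B))}$. Finally, since every element of $\Aff(T(A))$ is a difference of two strictly positive ones, the constant $1=\rho_B([1_B])$ already lies in the image, and $\overline{\rho_B(K_0(B))}$ is a closed subgroup, this extends from strictly positive $f$ to all of $\Aff(T(A))$.

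For the harder direction \emph{trace condition $\Rightarrow$ real rank zero}, fix positive $a\in A\otimes\mathcal K$ and $\epsilon>0$; I aim to produce a projection $p\in\overline{a(B\otimes\mathcal K)a}$ with $(a-\epsilon)_+\precsim p$, and then iterate along $\epsilon_n\downarrow 0$ with $p_n\le p_{n+1}$ to assemble an approximate unit of projections. The rank functions of the cut-downs satisfy $d_\tau((a-\epsilon)_+)<d_\tau((a-\epsilon/2)_+)$ on $T(B)$, and both factor through $\gamma^*$; a Dini/compactness interpolation lets me insert a continuous affine $h$ with $\gamma^*(h)$ strictly between them, and by hypothesis $\gamma^*(h)\in\overline{\rho_B(K_0(B))}$, so $\gamma^*(h)$ is approximated by $\rho_B([q_1]-[q_2])$ for projections $q_i$. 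The decisive step is then a realisation statement in the simple, $\Z$-stable, stably finite algebra $\overline{a(B\otimes\mathcal K)a}$: a $K_0$-class whose rank lies strictly between $d_\tau((a-\epsilon)_+)$ and $d_\tau((a-\epsilon/2)_+)$ is represented by an honest projection $p$ Cuntz-below $(a-\epsilon/2)_+$ and Cuntz-above $(a-\epsilon)_+$, which is supplied by strict comparison together with the $V\sqcup\mathrm{LAff}_{++}$ description of $\operatorname{Cu}$. This $p$ lies in $\overline{a(B\otimes\mathcal K)a}$ and is the required approximate unit.

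The main obstacle is precisely this last realisation step: the hypothesis $\gamma^*(\Aff(T(A)))\subseteq\overline{\rho_B(K_0(B))}$ is a global statement about $\Aff(T(B))$, whereas real rank zero of the inclusion demands projections living inside a prescribed hereditary subalgebra $\overline{a(B\otimes\mathcal K)a}$ with ranks pinned between two cut-downs of $a$. Reconciling the global $K$-theoretic input with this local Cuntz-comparison constraint, and doing so compatibly as $\epsilon\to0$ so that the chosen projections increase to an approximate unit, is where the $\Z$-stability machinery (strict comparison, the structure of $\operatorname{Cu}$, and realisability of $K_0$-classes as projections below a given positive element) must be deployed with care. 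A subsidiary technical point, also handled via $\Z$-stability and Dini's theorem, is upgrading the merely lower semicontinuous functions $d_\tau((a-\epsilon)_+)$ to genuine elements of $\Aff(T(A))$ so that the hypothesis on $\gamma^*$ can be invoked.
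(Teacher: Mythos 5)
Your overall skeleton matches the paper's: the forward direction realises a strictly positive $f\in\Aff(T(A))$ as a rank function $d_\tau(a)$ of some $a\in(A\otimes\mathcal K)_+$, pairs it against projections coming from real rank zero, and gets uniformity from a Dini-type argument (the paper does exactly this in Theorem \ref{thm: RR0impliestrace}, using an increasing approximate unit of projections); the converse direction sandwiches a $K_0$-class between rank functions of two cut-downs of $a$ and uses strict comparison, which is the strategy of Theorem \ref{thm: traceimpliesRR0}. However, your converse direction has a genuine gap at precisely the step you call decisive. You produce a projection $p\in\overline{a(B\otimes\mathcal K)a}$ with $(a-\epsilon)_+\preceq p\preceq (a-\epsilon/2)_+$ and then assert that $p$ ``is the required approximate unit''. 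Cuntz domination $(a-\epsilon)_+\preceq p$ does \emph{not} imply that $p$ acts approximately as a unit on $(a-\epsilon)_+$: the former is a comparison of ranks, while the latter requires $p$ to dominate the relevant spectral part of $a$ in the operator order, and a projection of large rank inside $\overline{a(B\otimes\mathcal K)a}$ can be ``at an angle'' to the spectral subspace of $a$ above $\epsilon$. Neither strict comparison nor the $V\sqcup\mathrm{LAff}_{++}$ description of the Cuntz semigroup closes this gap, since these tools only ever produce comparison in $\operatorname{Cu}$, never operator inequalities. The same problem leaves your inductive choice $p_n\le p_{n+1}$ unjustified.

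The missing ingredient is stable rank one of $B$ (available here because $B$ is simple, exact, stably finite and $\Z$-stable), deployed as in the last part of the proof of \cite[Theorem 7.2]{rordamUHF}, and this is exactly how the paper proceeds: having found, via the trace hypothesis and strict comparison, a projection $q$ with $f_{\delta/2}(a)\preceq q\preceq f_{\delta/8}(a)$, one uses stable rank one of the hereditary subalgebra $\overline{f_{\delta/8}(a)Bf_{\delta/8}(a)}$ (obtained from Brown's stable isomorphism theorem) to implement these Cuntz subequivalences by unitary conjugation, yielding a projection $p$ that satisfies the \emph{operator} inequalities $f_\delta(a)\le p\le f_{\delta/16}(a)$. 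It is these inequalities, not Cuntz comparison, that make the resulting projections increasing and an approximate unit for $\overline{aBa}$. Two smaller points: the paper sidesteps your technical issues with traces on the stabilisation by proving the statement for the unital inclusions $M_n(A)\subseteq M_n(B)$ and then invoking the inductive-limit permanence of Proposition \ref{prop: rr0inductivelim}; and the uniform gap between $d_\tau(f_{\delta/2}(a))$ and $d_\tau(f_{\delta/8}(a))$ needed to insert your class is obtained by placing a full positive element supported in the spectral gap and using compactness of $T(B)$ --- the functions $\tau\mapsto d_\tau$ of the cut-downs are only lower semicontinuous, so your appeal to their ``continuity'' is not available as stated.
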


In the absence of traces, we consider inclusions which are simple and purely infinite in the sense that all non-zero positive elements of the subalgebra are full and properly infinite in the larger algebra. Hence we obtain a relative version of a classical result of Zhang (\cite{zhangRR0}) which says that simple purely infinite $\C$-algebras have real rank zero. 

\begin{theorem}[Theorem \ref{thm: purelyinfrr0}]\label{thm:IntroInf1}
Let $A\subseteq B$ be an inclusion of $\C$-algebras such that every nonzero positive element in $A$ is full and properly infinite in $B$.\ Then $A\subseteq B$ has real rank zero.
\end{theorem}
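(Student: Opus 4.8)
The plan is to verify the definition directly. Fix a nonzero $a\in A_+$; I must show the hereditary subalgebra $D:=\overline{aBa}$ has an approximate unit of projections. Now $a$ is a strictly positive element of the $\sigma$-unital algebra $D$, so an increasing net of projections $(p_\lambda)$ in $D$ is an approximate unit if and only if $\|p_\lambda a-a\|\to0$; hence it suffices to produce, for each $\varepsilon>0$, a projection $p\in D$ with $\|pa-a\|\le\varepsilon$, and then to organise these into an increasing net. By functional calculus this reduces further, since if $p$ acts as a local unit on the cut-down $(a-\varepsilon)_+$, i.e.\ $p(a-\varepsilon)_+=(a-\varepsilon)_+$, then $pa-a=(p-1)\min(a,\varepsilon)$, which has norm at most $\varepsilon$. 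So the problem becomes: \emph{for each $\varepsilon>0$ find a projection $p\in D$ dominating $(a-\varepsilon)_+$ as a local unit.}

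The hypothesis is used through its stability under functional calculus on $a$: for every continuous $f$ with $f(0)=0$ one has $f(a)\in A_+$, so every cut-down $(a-\varepsilon)_+$ and every collar function $f(a)$ is again full and properly infinite in $B$. Setting $d=(a-\varepsilon)_+$, proper infiniteness gives $d\oplus d\precsim d$, which yields orthogonal positive elements $e_1,e_2\in\overline{dBd}$ with $(d-\eta)_+\precsim e_i$; the room left by the orthogonal second copy is the hallmark of an infinite element and, through Kirchberg and Rørdam's analysis of properly infinite elements, can be refined to genuine projections of $\overline{dBd}\subseteq D$. Fullness of the slightly smaller cut-downs would then be invoked to guarantee that such a projection can be taken to dominate $d$ as a local unit.

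I expect this last refinement to be the main obstacle. Proper infiniteness is purely a Cuntz-comparison statement, whereas a local unit requires the honest identity $p(a-\varepsilon)_+=(a-\varepsilon)_+$, and there is generally no spectral gap at $0$ to supply a support projection. I would cross this gap with Rørdam's lemma, shrinking $d$ to a nearby cut-down and absorbing its non-projection part into the orthogonal infinite copy supplied above. Fullness appears to be essential rather than cosmetic here: a strictly positive element of $C_0([0,1))\otimes\mathcal{O}_\infty$ is full and properly infinite, yet that algebra has no nonzero projections whatsoever, so proper infiniteness together with fullness of a \emph{single} element is not enough. What excludes such degenerations is exactly that the hypothesis forces every cut-down $(a-\varepsilon)_+$, and not only $a$ itself, to be full; channeling this fullness of all cut-downs into the comparison is, I anticipate, the technical heart of the argument.

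Finally, the projections dominating the successive cut-downs $(a-\varepsilon_n)_+$ for $\varepsilon_n\downarrow0$ can be organised into an increasing approximate unit of projections for $D$, the required nesting being arranged with the comparison room afforded by proper infiniteness so that each new projection sits above its predecessors. This produces an approximate unit of projections for $\overline{aBa}$ for every nonzero $a\in A_+$, which is precisely the statement that $A\subseteq B$ has real rank zero. Conceptually the proof is a relative version of Zhang's theorem that simple purely infinite $C^*$-algebras have real rank zero \cite{zhangRR0}, with simplicity replaced by fullness of every positive element of $A$ and infiniteness by their proper infiniteness.
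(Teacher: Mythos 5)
Your reduction is sound and matches the paper's: it suffices to produce, for each $\varepsilon>0$, a projection $p\in\overline{aBa}$ with $p(a-\varepsilon)_+=(a-\varepsilon)_+$, and your observation that the hypothesis passes to every $f(a)$ with $f(0)=0$, together with the $C_0([0,1))\otimes\mathcal{O}_\infty$ example showing why fullness of \emph{all} cut-downs is essential, is exactly right. But the step you yourself flag as ``the technical heart'' is the entire content of the theorem, and your sketch of it fails concretely. The orthogonal elements $e_1,e_2\in\overline{dBd}$ with $(d-\eta)_+\preceq e_i$ supplied by proper infiniteness of $d=(a-\varepsilon)_+$ are orthogonal to \emph{each other} but not to $d$ or its cut-downs, so there is no room to ``absorb the non-projection part'' while leaving the cut-down fixed: any device producing a projection that acts as a unit on $(d-\eta)_+$ needs a receptacle orthogonal to $(d-\eta)_+$, which $e_2$ is not. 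Worse, $e_1,e_2$ are merely elements of $B$, not of $A$, so the standing hypothesis (fullness and proper infiniteness) cannot be invoked for them at all; it applies only to elements of $A$, i.e.\ to functional-calculus images of $a$.

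The paper resolves both problems at once with a collar, which is the idea missing from your proposal. If $\sigma(a)$ misses $(\varepsilon/2,\varepsilon)$ there is a spectral projection and nothing to do; otherwise choose $f\geq0$ continuous with open support $(\varepsilon/2,\varepsilon)$. Then $f(a)$ lies in $A$ (so the hypothesis \emph{does} apply to it: it is full and properly infinite in $B$) and is orthogonal to $(a-\varepsilon)_+$. Fullness puts $a$ in the ideal generated by $f(a)$, so proper infiniteness of $f(a)$ gives $a\preceq f(a)$ by \cite[Proposition 3.5]{purelyinfKR}: the whole element is absorbed into the collar, not into a copy sitting under a cut-down. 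Next, \cite[Remark 2.5]{pasnicurordam} converts this subequivalence into a contraction $x\in\overline{aBa}$ with $x^*x(a-\varepsilon/2)_+=(a-\varepsilon/2)_+$ and $xx^*\in\overline{f(a)Bf(a)}\subseteq\overline{(a-\varepsilon/2)_+B(a-\varepsilon/2)_+}$; hence $x^*x\,xx^*=xx^*$, so $x$ is a scaling element in the sense of Blackadar--Cuntz \cite{ScalingElem}, and $xx^*(a-\varepsilon)_+=0$ by orthogonality of the collar. Finally $v=x+(1-x^*x)^{1/2}$ is an isometry in the unitisation of $\overline{aBa}$ and $p=1-vv^*$ is an honest projection in $\overline{aBa}$ with $p(a-\varepsilon)_+=(a-\varepsilon)_+$. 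This scaling-element mechanism (not Rørdam's lemma, which only yields further Cuntz comparisons) is what bridges the gap you identified between comparison data and a genuine local unit.
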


Motivated by the commutative setting (Theorem \ref{thm: IntroComm}), it seems fundamental to understand whether any inclusion of real rank zero (approximately) factors through a $\C$-subalgebra of real rank zero. We give a positive answer for the full inclusions classified by the first named author in \cite{oinftyclass}. 

\begin{theorem}[Theorem \ref{thm: factKirch}]
Let $A \subseteq B$ be an inclusion of separable exact $\C$-algebras such that $1_B \in A$ and suppose that the inclusion $\iota \colon A\hookrightarrow B$ is full, nuclear and $\mathcal{O}_\infty$-stable.\ Then $\iota$ is approximately unitarily equivalent to a $^*$-homomorphism which factors through a $\C$-algebra with real rank zero.
\end{theorem}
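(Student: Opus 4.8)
The plan is to use the classification theorem from \cite{oinftyclass} to reduce the problem to constructing a suitable model inclusion, rather than working with $\iota$ directly. Concretely, the classification of full, nuclear, $\mathcal{O}_\infty$-stable embeddings says that two such embeddings $A \hookrightarrow B$ are approximately unitarily equivalent precisely when they induce the same $KK$-class (subject to the relevant unitality/fullness bookkeeping). So it suffices to produce a single $^*$-homomorphism $\varphi \colon A \to B$ that (a) is itself full, nuclear and $\mathcal{O}_\infty$-stable, (b) induces the same $KK$-class as $\iota$, and (c) factors as $A \xrightarrow{\psi} D \xrightarrow{\theta} B$ through some $\C$-algebra $D$ of real rank zero. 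Once such a $\varphi$ is built, the classification theorem upgrades the $KK$-agreement to approximate unitary equivalence $\iota \approx_{au} \varphi$, and since $\varphi$ visibly factors through $D$, we are done.

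The key steps, in order, would be as follows. First I would fix a concrete intermediate algebra $D$ of real rank zero into which $A$ maps and out of which $B$ receives a map, while keeping track of $K$-theory and $KK$. The natural candidate is to absorb an $\mathcal{O}_\infty$-stable, real-rank-zero model: for instance one can take a factorisation through (a stabilised corner of) a Kirchberg algebra, since Kirchberg algebras are purely infinite and hence have real rank zero by Zhang's theorem, and here Theorem \ref{thm:IntroInf1} provides the relative statement we need. Second, I would verify that the composite $\theta \circ \psi$ can be arranged to be full, nuclear and $\mathcal{O}_\infty$-stable: nuclearity and $\mathcal{O}_\infty$-stability are inherited from the ambient $\mathcal{O}_\infty$-stability hypothesis and the choice of $D$, while fullness of the composite must be checked against fullness of $\iota$. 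Third, I would compute $KK(\varphi)$ and match it with $KK(\iota)$; this is where the freedom in choosing $D$ and the maps $\psi, \theta$ is spent, using the $KK$-theoretic flexibility afforded by $\mathcal{O}_\infty$-stability (which makes the relevant $KK$-groups behave well and the embeddings absorbing).

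The main obstacle I expect is step three combined with the fullness constraint in step two: one must realise the prescribed $KK$-class $KK(\iota)$ by a map that genuinely factors through a real-rank-zero algebra \emph{and} remains full. Factoring a $KK$-class through an intermediate algebra is not automatic — it requires that $D$ be chosen with enough $K$-theory (and enough room, via stabilisation and $\mathcal{O}_\infty$-absorption) to host a decomposition $KK(\iota) = KK(\theta)\circ KK(\psi)$, and simultaneously that the factoring maps do not collapse ideals, so that fullness of $\iota$ is preserved through $D$. I would handle this by exploiting that $\mathcal{O}_\infty$-stable embeddings are determined by $KK$ together with an ideal/fullness datum, and by choosing $D$ to be a large enough purely infinite (hence real-rank-zero) algebra so that the $KK$-factorisation becomes a formal consequence of the surjectivity/flexibility built into Kirchberg-type absorption. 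The remaining verifications — that the intermediate algebra indeed has real rank zero (via Theorem \ref{thm:IntroInf1} or the purely infinite case) and that approximate unitary equivalence is preserved under the factorisation — should then be routine applications of the cited classification and permanence results.
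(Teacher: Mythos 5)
Your high-level strategy coincides with the paper's: invoke the existence and uniqueness parts of the classification of full, nuclear, $\mathcal{O}_\infty$-stable maps from \cite{oinftyclass} to replace $\iota$ by a map with the same $\KK_{\nuc}$-class that visibly factors through a purely infinite (hence real rank zero) algebra. However, there is a genuine gap exactly at the point you yourself flag as the main obstacle: you never actually produce the factorisation of $\KK_{\nuc}(\iota)$ through a real-rank-zero algebra. Your proposed remedy --- choose $D$ ``large enough'' so that the $\KK$-factorisation becomes ``a formal consequence of the surjectivity/flexibility built into Kirchberg-type absorption'' --- is not an argument. There is no general principle by which an arbitrary class in $\KK_{\nuc}(A,B)$ factors through a prescribed purely infinite algebra: for instance, nothing factors nontrivially through $\mathcal{O}_2$, and for a general choice of Kirchberg algebra $D$ there is no reason the map $\KK(A,D)\times \KK_{\nuc}(D,B)\to \KK_{\nuc}(A,B)$ given by the Kasparov product should hit $\KK_{\nuc}(\iota)$.

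The paper closes this gap with one specific input you are missing: Kumjian's theorem (building on Pimsner's work), which provides a simple, separable, purely infinite, \emph{exact} $\C$-algebra $C$ together with an embedding $\kappa\colon A\to C$ that is a $\KK$-equivalence (\cite[Theorem 3.1]{KumjianKKeqInclusion}, \cite[Corollary 4.5]{pimsner}). With the first leg a $\KK$-equivalence, the factorisation problem trivialises: the second leg is forced to have class $\KK_{\nuc}(\iota)\circ\KK(\kappa)^{-1}\in\KK_{\nuc}(C,B)$ (this lands in $\KK_{\nuc}$ by Skandalis's result that the product of a $\KK$-class with a $\KK_{\nuc}$-class is nuclear, \cite[Proposition 2.2(b)]{Skandalis}), and the existence theorem \cite[Theorem A]{oinftyclass} realises it by a full, nuclear, $\mathcal{O}_\infty$-stable map $\psi\colon C\to B$; uniqueness \cite[Theorem 8.10]{oinftyclass} then gives the desired equivalence of $\iota$ with $\psi\circ\kappa$. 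Note also that your concern about preserving fullness evaporates in this setup: $C$ is simple, so $\kappa$ is automatically full, and fullness of $\psi\circ\kappa$ follows from fullness of $\psi$. Finally, your insistence on a Kirchberg algebra is too restrictive in general: since $\kappa$ is a $\KK$-equivalence, $C$ can only be taken nuclear when $A$ is $\KK$-equivalent to a nuclear $\C$-algebra (e.g.\ under the $\UCT$), which is precisely the extra hypothesis the paper imposes for that refinement; for general exact $A$ one must settle for $C$ simple, separable, purely infinite and exact, which still has real rank zero by Zhang's theorem.
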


This paper is organised as follows. In Section \ref{sect: eqdef} we introduce real rank zero for inclusions and $^*$-homomorphisms. After showing a series of equivalent characterisations for this property, we offer a complete description of the commutative case in Section \ref{sect: CommCase}. Then, Section \ref{sect: permanenceprop} collects a number of permanence properties. In Section \ref{sect: Kth} we examine inclusions of real rank zero at the level of their invariant. In particular, we prove that under some regularity assumptions, real rank zero can be characterised by the relative pairing of $K$-theory and traces. Then, Section \ref{sect: purelyinf} focuses on infinite inclusions and provides some non-trivial examples of inclusions of real rank zero.

\subsection*{Acknowledgements} The first-named author was supported by the IRFD grants 1054-00094B and 1026-00371B. The second-named author was supported by the EPSRC grant EP/R513295/1. The first-named author would like to thank Matt Kennedy for suggesting applications to Furstenberg boundaries. The second-named author would like to thank Stuart White for his supervision on this project, Rufus Willett and Julian Kranz for helpful conversations on boundaries of hyperbolic groups, and Alistair Miller for asking a question that helped extending the result in Theorem \ref{roealgebraex}. This article forms part of the DPhil thesis of the second-named author.

\section{Equivalence of definitions}\label{sect: eqdef}

\numberwithin{theorem}{section}

Before defining real rank zero for inclusions, let us first recall the corresponding notion for $\C$-algebras, along with a number of equivalent characterisations. Precisely, in \cite{rr0}, Brown and Pedersen prove the following result.

\begin{theorem}[{\cite[Theorem 2.6]{rr0}}]\label{thm: RR0Alg}
For a $\C$-algebra $A$, the following conditions are equivalent:
\begin{enumerate}
    \item self-adjoint elements in $A$ can be approximated arbitrarily well by invertible self-adjoint elements in the minimal unitisation of $A$;
    \item the self-adjoint elements in $A$ with finite spectra are dense in the set of self-adjoint elements in $A$;
    \item every hereditary $\C$-subalgebra of $A$ has an approximate unit (not necessarily increasing) consisting of projections;
    \item for each pair of positive orthogonal elements $x,y$ in the minimal unitisation of $A$ and $\epsilon> 0$, there is a projection $p$ in the minimal unitisation of $A$, such that $\|(1-p)x\|\leq\epsilon$ and $\|py\|\leq\epsilon$.
\end{enumerate}
\end{theorem}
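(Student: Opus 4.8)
The plan is to establish the four conditions equivalent by closing the cycle $(1)\Rightarrow(2)\Rightarrow(3)\Rightarrow(4)\Rightarrow(1)$ (together with the easy converse $(2)\Rightarrow(1)$), the whole argument being driven by continuous functional calculus and spectral projections; throughout, one must keep careful track of the scalar part coming from the minimal unitisation $\widetilde{A}$. For $(1)\Leftrightarrow(2)$: given a self-adjoint $a\in A$ within $\epsilon$ of an invertible self-adjoint $b\in\widetilde{A}$, the point $0$ is isolated from $\mathrm{sp}(b)$, so applying a continuous step function that is locally constant near each piece of $\mathrm{sp}(b)$ yields a self-adjoint element of finite spectrum within $\epsilon$ of $b$, and a short computation returns it to $A$; conversely, if $a$ is close to a finite-spectrum self-adjoint $c\in A$, then adding a small multiple of the spectral projection $\chi_{\{0\}}(c)\in\widetilde{A}$ pushes the spectrum off $0$ and produces a nearby invertible self-adjoint in $\widetilde{A}$.

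For $(2)\Rightarrow(3)$ let $B\subseteq A$ be a hereditary \C-subalgebra. It suffices to show that for every positive $x\in B$ and $\epsilon>0$ there is a projection $p\in B$ with $\|x-px\|<\epsilon$, since applying this to $x=\sum_{f\in F}f$ for a finite set $F\subseteq B_+$ produces a single projection that is simultaneously an approximate unit for all of $F$, and indexing over pairs $(F,\epsilon)$ assembles the (not necessarily increasing) approximate unit of projections. To produce $p$, approximate $x$ by a positive finite-spectrum element $c\in B$ using (2) and functional calculus, and let $p=\chi_{(\eta,\infty)}(c)$ for a suitable small $\eta>0$; since this spectral projection is a continuous function of $c$ vanishing at $0$ and $B$ is hereditary, $p\in B$, while $\|x-px\|$ is controlled by $\eta$ and the approximation error.

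For $(3)\Rightarrow(4)$, given orthogonal positive $x,y\in\widetilde{A}$, take a projection $p$ lying in the hereditary subalgebra $\overline{x\widetilde{A}x}$ that is an approximate unit for $x$, so that $\|(1-p)x\|\le\epsilon$; orthogonality $xy=0$ gives $y\cdot\overline{x\widetilde{A}x}=0$ and hence $py=0$, so both estimates hold (the second exactly). Finally, for $(4)\Rightarrow(1)$, decompose a self-adjoint $a=a_+-a_-$ into its orthogonal positive and negative parts and apply (4) to obtain a projection $p$ with $\|(1-p)a_+\|\le\epsilon$ and $\|pa_-\|\le\epsilon$; then $a_+\approx pa_+p$ and $a_-\approx(1-p)a_-(1-p)$, and the block-diagonal self-adjoint $b=pa_+p+\epsilon p-(1-p)a_-(1-p)-\epsilon(1-p)$ commutes with $p$ and has spectrum contained in $(-\infty,-\epsilon]\cup[\epsilon,\infty)$, hence is invertible in $\widetilde{A}$ and within $O(\epsilon)$ of $a$.

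The routine parts are the functional-calculus perturbations in $(1)\Leftrightarrow(2)$ and the block construction in $(4)\Rightarrow(1)$. I expect the main obstacle to be $(2)\Rightarrow(3)$: one must guarantee that the approximating element $c$ and the spectral projection $p$ genuinely lie in the possibly non-unital hereditary subalgebra $B$ (rather than merely in $\widetilde{A}$), and that the projections so produced can be organised into an approximate unit despite its not being increasing. A secondary nuisance, present in $(3)\Rightarrow(4)$ and $(4)\Rightarrow(1)$, is the consistent bookkeeping of the unitisation, which in practice is handled by first checking that the conditions for $A$ and for $\widetilde{A}$ are equivalent.
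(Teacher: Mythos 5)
Your overall architecture $(2)\Rightarrow(3)\Rightarrow(4)\Rightarrow(1)$ is the standard one (in the relative setting of this paper it is exactly the chain (ii)$\Rightarrow$(i), (i)$\Rightarrow$(iv), (iv)$\Rightarrow$(iii) in the proof of Theorem \ref{thm: eqdefunital}), and your block construction for $(4)\Rightarrow(1)$ is correct. The genuine gap is the closing arrow $(1)\Rightarrow(2)$. Invertibility of the approximant $b$ gives you exactly one spectral gap, at $0$; it does not disconnect the rest of $\mathrm{sp}(b)$. A continuous function that is locally constant on $\mathrm{sp}(b)$ is constant on each connected component of $\mathrm{sp}(b)$, so if $\mathrm{sp}(b)$ contains an interval of length $\ell$, every finite-spectrum element obtained from $b$ by functional calculus lies at distance at least $\ell/2$ from $b$. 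Worse, your argument is purely local: it uses only that $a$ is close to \emph{some} invertible self-adjoint $b$, so if it worked it would show that every invertible self-adjoint element is close to a finite-spectrum one, which is false (take $b$ to be the identity function in $C([1,2])$; the finite-spectrum self-adjoints there are the constants). This is precisely the hard implication of Brown--Pedersen's theorem and cannot be closed by a one-step spectral argument; since $(1)\Rightarrow(2)$ is your only exit from condition (1), the equivalence is not established.

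The route taken in \cite{rr0} (the paper under review cites the theorem rather than reproving it, but its relative analogue, Theorem \ref{thm: eqdefunital}, is proved along exactly these lines) never passes from (1) to (2) directly. Instead one proves $(1)\Rightarrow(4)$: given orthogonal positive $x,y$, approximate $a=x-y$ by an invertible self-adjoint $b$, set $p=\chi_{(0,\infty)}(b)$ (legitimate functional calculus, since $0\notin\mathrm{sp}(b)$), and control $\|px-x\|$ and $\|(1-p)y-y\|$ via the perturbation estimate for positive parts, $\|a_+-b_+\|\leq \epsilon+\sqrt{(\|a\|+\|b\|)\epsilon}$ (\cite[Lemma 2.2]{rr0}), using $a_+=x$, $a_-=y$, $pb_+=b_+$, $(1-p)b_-=b_-$. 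Then $(4)\Rightarrow(2)$ is a discretization: apply (4) to the orthogonal pairs $\left((a-j/n)_+,(a-j/n)_-\right)$, $j=1,\dots,n-1$, arrange the resulting projections $p_j$ to be decreasing, and check that $\sum_j (j/n)(p_j-p_{j+1})$ is a finite-spectrum self-adjoint within $O(1/n)$ of $a$. Separately, in your $(2)\Rightarrow(3)$ you correctly flag the obstacle but then assume it away: condition (2) produces finite-spectrum approximants in $A$, not in the hereditary subalgebra $B$, so you cannot simply take $c\in B$. The standard repair (used in the paper's proof of (ii)$\Rightarrow$(i)) is to take a finite-spectrum $b\in A$ near $x$, let $q$ be a spectral projection of $b$, and conjugate into $B$ by setting $z=x^{1/n}qx^{1/n}\in B$; an estimate on $\|z-z^2\|$ shows $\mathrm{sp}(z)$ has a gap at $1/2$, and the spectral projection of $z$ then lies in $B$ and almost dominates $x$. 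Finally, a cosmetic point in $(3)\Rightarrow(4)$: condition (3) concerns hereditary subalgebras of $A$, not of the unitisation, so you must first note that $xy=0$ forces at least one of $x,y$ to lie in $A$ (and treat the other case by replacing $p$ with $1-p$).
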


\begin{remark}
In \cite{rr0}, condition (i) of Theorem \ref{thm: RR0Alg} was taken as the definition of real rank zero for a $\C$-algebra. However, it is now common practice to say that a $\C$-algebra has real rank zero if it satisfies any of the equivalent conditions in Theorem \ref{thm: RR0Alg}. 
\end{remark}

We define the notion of real rank zero for inclusions of $\C$-algebras. However, we will show that, similar to the result in Theorem \ref{thm: RR0Alg}, there are a number of equivalent characterisations.

\begin{definition}\label{defn: RR0inclusions}
We say that an inclusion of $\C$-algebras $A\subseteq B$ has \emph{real rank zero} if for any nonzero positive element $a\in A$, the hereditary $\C$-subalgebra $\overline{aBa}$ of $B$ has an approximate unit of projections.\footnote{The approximate unit of projections is not assumed to be increasing.}
\end{definition}

\begin{remark}
Since $\overline{xBx^*}=\overline{xx^*Bxx^*}$ for all $x\in B$, an inclusion $A\subseteq B$ has real rank zero if and only if for any $a\in A$, $\overline{aBa}$ has an approximate unit of projections.
\end{remark}

A natural source of inclusions arises from $^*$-homomorphisms by considering the inclusion of the image into the codomain. Then, if this inclusion has real rank zero, we will say that the $^*$-homomorphism has real rank zero.

\begin{definition}\label{defn: RR0morphisms}
Let $\theta:A \to B$ be a $^*$-homomorphism between $\C$-algebras. Then we say that $\theta$ has \emph{real rank zero} if the inclusion $\theta(A)\subseteq B$ has real rank zero.
\end{definition}

In the spirit of {\cite[Theorem 2.6]{rr0}}, let us show that there are many equivalent characterisations of when an inclusion has real rank zero. Since some of the properties require the existence of a unit, let us first prove the following theorem in the unital case.

\begin{theorem}\label{thm: eqdefunital}
Let $A\subseteq B$ be an inclusion of unital $\C$-algebras such that $1_B\in A$. Then the following conditions are equivalent:
\begin{enumerate}
\item the inclusion $A\subseteq B$ has real rank zero;

\item for any self-adjoint $a\in A$ and any $\epsilon>0$, there exists a self-adjoint $b\in B$ with finite spectrum such that $\|a-b\|\leq\epsilon$;

\item for any self-adjoint $a\in A$ and any $\epsilon>0$, there exists an invertible self-adjoint $b\in B$ such that $\|a-b\|\leq \epsilon$;

\item for any positive $x,y\in A$ such that $xy=0$, and any $\epsilon>0$, there exists a projection $p\in B$ such that $\|px-x\|\leq\epsilon$ and $\|(1-p)y-y\|\leq\epsilon$.
\end{enumerate}
\end{theorem}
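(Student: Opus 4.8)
The plan is to prove the cycle of implications (i) $\Rightarrow$ (ii) $\Rightarrow$ (iii) $\Rightarrow$ (iv) $\Rightarrow$ (i), since this mirrors the structure of Brown--Pedersen's original theorem and keeps the functional-calculus steps elementary. The hardest implications should be the two that genuinely use the hereditary-subalgebra definition, namely (i) $\Rightarrow$ (ii) and (iv) $\Rightarrow$ (i); the remaining two implications are essentially formal once one is in the unital setting with $1_B \in A$.

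For (i) $\Rightarrow$ (ii), let $a = a^* \in A$ and $\epsilon > 0$. I would cover $\mathrm{spec}(a) \subseteq \mathbb{R}$ by finitely many disjoint half-open intervals $I_1,\dots,I_n$ of length at most $\epsilon$ and write $a = \sum_j a g_j(a)$ using a partition of the identity into continuous bumps; the point is to replace each spectral ``slab'' of $a$ by a scalar multiple of a projection coming from $B$. Concretely, I would first reduce to approximating the positive and negative parts of $a$ separately, so assume $a \geq 0$. Decompose $a$ using orthogonal positive elements $x_j = a f_j(a)$ supported near the interval $I_j$, where the $f_j$ are continuous and have pairwise orthogonal supports, so that $x_j x_k = 0$ for $j \neq k$. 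Each nonzero $x_j$ lies in $A$, and applying the definition of real rank zero to $x_j$ gives an approximate unit of projections in $\overline{x_j B x_j}$; choosing a projection $p_j$ close enough to acting as a unit on $x_j$, one has $\|p_j x_j - x_j\| \leq \delta$ for any prescribed $\delta$. Because the $x_j$ are orthogonal, the hereditary subalgebras $\overline{x_j B x_j}$ are orthogonal, so the $p_j$ are mutually orthogonal projections in $B$; then $b = \sum_j \lambda_j p_j$ (with $\lambda_j \in I_j$ a chosen scalar) is a self-adjoint element of $B$ with finite spectrum, and a triangle-inequality estimate comparing $b$ to $\sum_j \lambda_j \frac{x_j}{\|x_j\|}$-type terms, controlled by the interval widths and by $\delta$, yields $\|a - b\| \leq \epsilon$. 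The delicate point here is bookkeeping the orthogonality and ensuring the $p_j$ can simultaneously be made to nearly fix each $x_j$; orthogonality is what makes the simultaneous choice automatic.

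The implication (ii) $\Rightarrow$ (iii) is standard: given a self-adjoint $b \in B$ with finite spectrum approximating $a$, I perturb each of the finitely many eigenvalues of $b$ by less than $\epsilon$ to avoid $0$, producing an invertible self-adjoint within $2\epsilon$ of $a$; this uses only that $b$ has finite spectrum and that $1_B \in B$. For (iii) $\Rightarrow$ (iv), I would take orthogonal positive $x,y \in A$ and consider the self-adjoint element $x - y \in A$; approximating it by an invertible self-adjoint $c \in B$ via (iii) and letting $p = \chi_{(0,\infty)}(c)$ be the spectral projection for the positive part (which lies in $B$ since $c$ is invertible, so $0 \notin \mathrm{spec}(c)$ and $\chi_{(0,\infty)}$ is continuous on $\mathrm{spec}(c)$), a functional-calculus estimate shows $p$ nearly fixes $x$ and nearly annihilates $y$, giving $\|px - x\|$ and $\|(1-p)y - y\|$ small. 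The orthogonality $xy = 0$ is exactly what makes $x - y$ behave like $x$ on the positive spectral part and like $-y$ on the negative part.

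Finally, for (iv) $\Rightarrow$ (i), I would show that every hereditary subalgebra $\overline{aBa}$ with $a \in A$ positive has an approximate unit of projections by verifying the standard criterion: given a positive contraction $z \in \overline{aBa}$ and $\epsilon > 0$, I must produce a projection $p \in \overline{aBa}$ with $\|pz - z\|$ small. The trick is to apply (iv) to a suitable orthogonal pair built from $a$ itself, using functional calculus to split $a$ into a piece $x$ that dominates $z$ and a piece $y$ orthogonal to the relevant part, obtaining a projection $p \in B$ that nearly fixes the dominating piece; one then checks $p$ can be arranged to lie in (or be corrected to lie in) $\overline{aBa}$ and to serve as the required approximate unit element. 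I expect this last step to be the main obstacle, since passing from the existence of a single nearly-fixing projection in $B$ to a genuine approximate unit of projections inside the hereditary subalgebra requires care: one must ensure the projections land in $\overline{aBa}$ and handle the standard two-step argument (first approximate $z$ by an element of the form $a^{1/2} w a^{1/2}$, then apply the orthogonality criterion), so the bulk of the technical work will be confined here.
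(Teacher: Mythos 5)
Your cycle breaks at (i) $\Rightarrow$ (ii), and since that is the only implication leaving (i), the gap is fatal to the whole equivalence. The proposed decomposition cannot exist: you ask for continuous functions $f_1,\dots,f_n$ with pairwise disjoint supports such that $\sum_j a f_j(a)$ approximates $a$, but when $\mathrm{spec}(a)$ is connected (say $[0,1]$) no such family exists. Disjointness of the supports forces gaps, i.e.\ spectral points $t$ at which every $f_j$ vanishes, and at such a point the error is $\bigl|t - \sum_j t f_j(t)\bigr| = t$, which is of the order of the spectral value, not of the interval width. (If continuous orthogonal bumps summing to $1$ existed on a connected spectrum, the open sets $\{f_j > 0\}$ would disconnect it.) So $\bigl\|a - \sum_j x_j\bigr\|$ cannot be made small and $b = \sum_j \lambda_j p_j$ does not approximate $a$. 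Note that the point you flag as delicate, the mutual orthogonality of the $p_j$, is actually fine ($x_jx_k=0$ does force $\overline{x_jBx_j}\cdot\overline{x_kBx_k}=0$); what fails is the decomposition of $a$ itself, and this is the central difficulty of the theorem rather than bookkeeping. The paper instead routes through (iv): it applies (iv) to the level cuts $(a-j/n)_+$ and $(a-j/n)_-$, passes to the sequence algebra $B_\infty$ so that the resulting projections $p_j$ \emph{exactly} fix $(a-j/n)_+$ and annihilate $(a-j/n)_-$, deduces that they commute with $a$ and are \emph{nested}, $p_1\geq p_2\geq\cdots\geq p_{n-1}$, takes the telescoping differences $q_j=p_j-p_{j+1}$ (which are orthogonal and sum to $1$, with $\|q_ja-(j/n)q_j\|\leq 2/n$), and finally lifts the finite-spectrum element $\sum_j (j/n)q_j$ from $B_\infty$ back to $B$ using semiprojectivity of $\mathbb{C}^n$. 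Nothing in your sketch plays the role of the nesting or of the lifting step; a nested family, not an orthogonal one, is what a connected spectrum permits.

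The other three implications are sound in outline. Your (ii) $\Rightarrow$ (iii) (perturb the finitely many spectral values off $0$) is correct. Your (iii) $\Rightarrow$ (iv) is exactly the paper's argument: approximate $x-y$ by an invertible self-adjoint $c$ and take $p=\chi_{(0,\infty)}(c)$; the quantitative step, controlling $\|(x-y)_\pm - c_\pm\|$ in terms of $\|x-y-c\|$ (with a square-root loss), is \cite[Lemma 2.2]{rr0}. Your (iv) $\Rightarrow$ (i) differs from the paper, which goes (iv) $\Rightarrow$ (ii) $\Rightarrow$ (i) instead, but it is workable: apply (iv) to the orthogonal pair $(a-\epsilon)_+,\ (\epsilon/2-a)_+ \in A$, observe that the resulting projection $p$ is nearly annihilated by the low spectral part of $a$, so that $g(a)pg(a)\in\overline{aBa}$ is close to $p$ for a bump function $g$ vanishing near $0$ and equal to $1$ on $[\epsilon/2,\|a\|]$; its spectrum then has a gap at $1/2$, and the corresponding spectral projection lies in $\overline{aBa}$ and nearly fixes $a$. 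This correction argument (essentially \cite[Lemma 3.13]{rr0}) would need to be written out, but the strategy is fine. To repair the proposal with minimal change, replace (i) $\Rightarrow$ (ii) by (i) $\Rightarrow$ (iv), which is three lines (a projection $p\in\overline{xBx}$ nearly fixing $x$ automatically satisfies $py=0$ when $xy=0$), and then prove (iv) $\Rightarrow$ (ii) by the nesting-and-lifting argument above.
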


\begin{proof}

We are going to show that (iii) and (iv) are equivalent, and then that (i)$\implies$(iv)$\implies$(ii)$\implies$ (i).

Assume condition (iii). We will show that condition (iv) holds. Let $x,y\in A$ be positive and mutually orthogonal, $\epsilon>0$, and $\epsilon_0>0$ such that $\epsilon_0+\sqrt{(4+\epsilon_0)\epsilon_0)}<\epsilon$. If $x=0$, we take $p=0$ and if $y=0$, we take $p=1$. Therefore, we can assume that $x$ and $y$
are nonzero. Rescaling if necessary, suppose that $\|x\|=\|y\|=1.$ If we let $a=x-y$ which is self-adjoint, then by (iii), there exists $b\in B$ invertible and self-adjoint such that $\|a-b\|\leq\epsilon_0$. Let $p$ be the spectral projection of $b$ supported on $(0,\infty]$. Then, we note that $pb_+=b_+$ and $(1-p)b_-=b_-$.\footnote{Note that $b_+=\max(b,0)$ and $b_-=\max(-b,0)$. In particular, $b=b_+-b_-$.} We claim that $p$ satisfies the required conditions in (iv). Since $xy=0$, it follows that $a_+=(x-y)_+=x$. As $pb_+=b_+$, by an application of the triangle inequality, we have that
\begin{align*}
\|px-x\|&\leq\|pa_+-pb_+\|+\|a_+-b_+\|\\
&\leq 2\|a_+-b_+\|.
\end{align*} By {\cite[Lemma $2.2$]{rr0}}, we get that $2\|a_+-b_+\|\leq\delta+\epsilon_0$, where $\delta^2=(\|a\|+\|b\|)\epsilon_0$. Since $\|a\|\leq 2$, we have that $\|b\|\leq 2+\epsilon_0$ and hence $\delta^2\leq (4+\epsilon_0)\epsilon_0$. Therefore, $$\|px-x\|\leq \epsilon_0+ \sqrt{(4+\epsilon_0)\epsilon_0)}<\epsilon.$$ Similarly, as $a_-=y$ and $(1-p)b_-=b_-$, we get that $$\|(1-p)y-y\|\leq \|(1-p)a_--(1-p)b_-\|+\|a_--b_-\|.$$ The same argument as above gives that $\|(1-p)y-y\|\leq\epsilon$, which proves the claim.

Suppose that (iv) holds. To show (iii), let $a\in A$ be self-adjoint and $\epsilon>0$. Applying (iv) to $a_+$ and $a_-$, there is a projection $p\in B$ such that $\|(1-p)a_+\|\leq \epsilon$ and $\|pa_-\|\leq\epsilon$. 

Note that 
\begin{align*}
\|pa_+(1-p)+(1-p)a_+p\|& \leq \|a_+(1-p)\|+\|(1-p)a_+\|\\&\leq 2\epsilon.
\end{align*} Likewise, we get that $\|pa_-(1-p)+(1-p)a_-p\|\leq 2\epsilon$, so using that $a=a_+-a_-$, we obtain
\begin{equation}\label{eq8}
    \|a-(pap+(1-p)a(1-p))\|\leq 4\epsilon.
\end{equation}

Define a self-adjoint element in $B$ by $$b=pap+2\epsilon p +(1-p)a(1-p)-2\epsilon(1-p).$$ Since $\|pa_-p\|\leq\epsilon$, we have $pa_-p\leq \epsilon p$, so $pap\geq p(a_+-\epsilon)p\geq -\epsilon p$. Therefore, we get that $pbp\geq \epsilon p$, which shows that $pbp$ is invertible in $pBp$. Similarly, $(1-p)a(1-p)\leq \epsilon(1-p)$, so $(1-p)b(1-p)\leq -\epsilon(1-p)$, which gives that $(1-p)b(1-p)$ is invertible in $(1-p)B(1-p)$. Thus, $b$ is invertible in $B$, by an application of {\cite[Lemma 2.3]{rr0}} with $x=b$ and $c=d=0$. Finally, using \eqref{eq8}, a standard application of the triangle inequality shows that $\|a-b\|\leq 8\epsilon$.

We then move to the implication (i)$\implies$(iv). Let $x,y\in A$ positive orthogonal and $\epsilon>0$. Since $A\subseteq B$ has real rank zero, there exists a projection $p\in \overline{xBx}$ such that $\|px-x\|\leq\epsilon$. Since $xy=0$, we also get that $py=0$, so (iv) holds.

Suppose that (iv) holds. To show (ii), let $a\in A$ be self-adjoint and $\epsilon>0$. If $a=0$, the conclusion follows by taking $b=0$. Suppose that $a$ is nonzero. By replacing $a$ with $(\|a_+\|+\|a_-\|)^{-1}(a+\|a_-\|1_B)$, we can assume that $a\in A$ is a positive contraction. Indeed, if $(x_n)_{n\geq 1}$ is a sequence of self-adjoint elements in $B$ with finite spectrum converging to $(\|a_+\|+\|a_-\|)^{-1}(a+\|a_-\|1_B)$, then $(\|a_+\|+\|a_-\|)x_n-\|a_-\|1_B$ is a sequence of self-adjoint elements with finite spectrum converging to $a$.

Let $n\in\mathbb{N}$ such that $3/n<\epsilon$, so by applying condition (iv) to $(a-j/n)_+$ and $(a-j/n)_-$, for any $j\in\{1,2,\ldots,n-1\}$ there exists a projection $p_j\in B_{\infty}$\footnote{Here $B_\infty$ stands for the sequence algebra of $B$ given by $$B_\infty= \ell^{\infty}(\mathbb{N},B)/\Big\{(b_n)_{n\geq 1} : \lim\limits_{n\to\infty}\|b_n\|=0\Big\}.$$} such that $$p_j(a-j/n)_+=(a-j/n)_+ \ \text{and}\ (1-p_j)(a-j/n)_-=(a-j/n)_-.$$ Then, it follows that
\begin{equation}\label{eqn: computeproj}
p_ja=p_j((a-j/n)_+-(a-j/n)_-+(j/n)1_B)=(a-j/n)_++ (j/n)p_j.
\end{equation}Moreover, $p_ja=ap_j$.

Let $f_j\in C_0((0,1])_+$ be given by  \[  f_j(t)= 
\begin{cases} 
      0 & 0\leq t\leq \frac{j-1}{n} \\
      \linear & \frac{j-1}{n}\leq t\leq \frac{j}{n} \\
      1 & \frac{j}{n}\leq t\leq 1.
   \end{cases}
\] As the spectrum of $p_ja$ in $D_j:= C^*((a-j/n)_+,p_j)$ is contained in $[j/n,1]$ and $p_j$ commutes with $a$, we get that $p_j=f_j(p_ja)$. Moreover, since the map $\phi:\C(a)\to D_j$ given by $x\mapsto p_jx$
is a $^*$-homomorphism, we also get that $p_jf_j(a)=f_j(p_ja)=f_j(ap_j)=f_j(a)p_j$.

As $p_{j-1}$ acts as an identity on $(a-(j-1)/n)_+$, it also fixes $f_j(a)$, so $$p_{j-1}p_j=p_{j-1}f_j(a)p_j=f_j(a)p_j=p_j,$$ which gives that $p_1\geq p_2\geq\ldots\geq p_{n-1}$ is a decreasing sequence of projections. 

Let $q_0=1-p_1$, $q_j=p_j-p_{j+1}$ for $j=1,2,\ldots, n-2$ and $q_{n-1}=p_{n-1}$. Then, for $j=1,2,\ldots,n-2$ equation \eqref{eqn: computeproj} yields that $$q_ja=(a-j/n)_+-(a-(j+1)/n)_++(j/n)q_j-(1/n)p_{j+1}, $$so 
\begin{equation}\label{estimations1}
    \|q_ja-(j/n)q_j\|\leq 2/n.
\end{equation}Also, \begin{equation}\label{estimations2}
    \|q_0a\|=\|a-(a-1/n)_+-(1/n)p_1\|\leq 2/n
\end{equation} and 
\begin{equation}\label{estimations3}
    \left\|q_{n-1}a-\frac{n-1}{n}q_{n-1}\right\|=\left\|\left(a-\frac{n-1}{n}\right)_+\right\|\leq 1/n.
\end{equation}

Since all of the approximations above are in mutually orthogonal $\C$-algebras, putting together \eqref{estimations1}, \eqref{estimations2}, \eqref{estimations3}, and using that $\sum\limits_{j=0}^{n-1}q_j=1_{B_\infty}$, we get that \begin{equation}\label{estimations}
    \Bigg\|a-\sum\limits_{j=0}^{n-1}(j/n)q_j\Bigg\|=\Bigg\|\sum\limits_{j=0}^{n-1}(q_ja-(j/n)q_j)\Bigg\|\leq 2/n.
\end{equation}

Since $(q_j)_{j=0}^{n-1}$ is a sequence of mutually orthogonal projections that sum up to $1_{B_\infty}$, the map $\phi:\mathbb{C}^n\to B_\infty$ given by $(\lambda_0,\ldots,\lambda_{n-1})\mapsto \sum\limits_{j=0}^{n-1}\lambda_jq_j$ is a unital $^*$-homomorphism. Let $c_j(B)=\{(b_i)_{i\geq 1}\in\ell^{\infty}(B): b_i=0 \ \forall \ i\geq j\}$ and note that $(c_j(B))_{j\geq 1}$ is an increasing sequence of ideals of $\ell^\infty(B)$ generating $c_0(B)$. Then, since $\mathbb{C}^n$ is semiprojective (\cite[Lemma 14.1.5]{LoringLifting} and \cite[Theorem 14.2.1]{LoringLifting}), it follows that $\phi$ lifts to a unital $^*$-homomorphism $\tilde{\phi}:\mathbb{C}^n\to \ell^{\infty}(B)/c_j(B)\subseteq\ell^{\infty}(B)$ for some $j\geq 1$. Say that $\tilde{\phi}$ is given by a sequence of $^*$-homomorphisms $(\phi_k)_{k\geq 1}$. Then $x_k=\phi_k(0,1/n,\ldots, (n-1)/n)$ is a self-adjoint element with finite spectrum and \eqref{estimations} gives that $\limsup\limits_{k\to\infty}\|a-x_k\|\leq 2/n$. Thus, for $k$ large, we get that $\|a-x_k\|\leq 3/n<\epsilon$, so condition (ii) holds as claimed.

Suppose now that (ii) holds and we want to obtain (i). We remark for the reader's convenience that this part of the proof does not use the assumption that the inclusion is unital. Let $a\in A_+$ be nonzero, and $E$ be the hereditary $\C$-subalgebra of $B$ generated by $a$. Rescaling if necessary, we can assume that $\|a\|=1$. Given $0<\epsilon<1$, it suffices to find a projection $p\in E$ such that $\|a-pa\|\leq 2\epsilon$. Choose $\delta>0$ such that $9\delta < \epsilon - \epsilon^2$ and $n\in\mathbb{N}$ such that $1-\delta^{2/n} \leq \delta$.

By assumption, there exists $b\in B$ self-adjoint and with finite spectrum such that $\|a-b\|\leq \delta$. Since $a$ is positive of norm $1$, we can assume that $0\leq b \leq 1$. Moreover, the function $t \to t^{1/n}$ is continuous, so we may also assume that 
\begin{equation}
\|a^{1/n}-b^{1/n}\|\leq \delta.\label{eq1}
\end{equation}
If we denote by $q$ the spectral projection of $b$ corresponding to the interval $[\delta,1]$, then $\|b-qb\|\leq \delta$. Moreover, $q\in B$ as $b$ has finite spectrum. Recall that $1-\delta^{2/n} \leq \delta$. Since the spectrum of $bq$ in $qBq$ is contained in $[\delta,1]$, it follows that $$\|q-b^{2/n}q\|\leq \sup\limits_{\delta\leq t\leq 1}|1-t^{2/n}|\leq 1-\delta^{2/n}\leq \delta.$$ Using the triangle inequality, we now get that 
\begin{equation}
\|a-qa\|\leq \|b-qb\|+\|a-b\|+\|q(a-b)\|\leq 3\delta.\label{eq2}
\end{equation} Similarly, \eqref{eq1} and the estimate $\|q-b^{2/n}q\|\leq \delta$ give that 
\begin{equation}
\|a^{1/n}qa^{1/n}-q\|\leq 3\delta.\label{eq3}
\end{equation} With the notation $z=a^{1/n}qa^{1/n}$, the last inequality implies that $$\|z-z^2\|\leq\|z-q\|+\|z-q\|\|z+q\|\leq 9\delta$$ and by our choice of $\delta$, the spectrum of $z$ is contained in $[0,\epsilon]\cup [1-\epsilon,1]$. If we consider the spectral projection $p$ of $z$ corresponding to the interval $[1/2,1]$, then $p\in E$ as $z\in E$, and $\|z-p\|\leq \epsilon$.  Hence, 
$\|p-q\|\leq \epsilon+3\delta$ and thus $$\|a-pa\|\leq \epsilon+3\delta+ \|a-qa\|\leq \epsilon+6\delta<2\epsilon.$$
\end{proof}

We can now obtain a similar result for non-unital inclusions. For any $\C$-algebra $C$, we denote its forced unitisation by $C^{\dagger}$. 

\begin{theorem}\label{thm: eqdefnonunital}
Let $A\subseteq B$ be an inclusion of $\C$-algebras. Then the following are equivalent:
\begin{enumerate}
\item the inclusion $A\subseteq B$ has real rank zero;

\item for any self-adjoint $a\in A$ and any $\epsilon>0$, there exists a self-adjoint $b\in B$ with finite spectrum such that $\|a-b\|\leq\epsilon$;

\item for any self-adjoint $a\in A^{\dagger}$ and any $\epsilon>0$, there exists an invertible self-adjoint $b\in B^{\dagger}$ such that $\|a-b\|\leq \epsilon$;

\item for any positive $x,y\in A^{\dagger}$ such that $xy=0$, and any $\epsilon>0$, there exists a projection $p\in B^{\dagger}$ such that $\|px-x\|\leq\epsilon$ and $\|(1-p)y-y\|\leq\epsilon$.
\end{enumerate}    
\end{theorem}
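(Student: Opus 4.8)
The plan is to reduce the non-unital statement (Theorem \ref{thm: eqdefnonunital}) to the unital case already established in Theorem \ref{thm: eqdefunital}, by passing to the forced unitisations $A^\dagger \subseteq B^\dagger$. The key structural observation is that this is an inclusion of unital $\C$-algebras with $1_{B^\dagger} = 1_{A^\dagger} \in A^\dagger$, so all four conditions of Theorem \ref{thm: eqdefunital} are available for the inclusion $A^\dagger \subseteq B^\dagger$. The bulk of the argument is then bookkeeping to match each condition (i)--(iv) of the present statement with the corresponding condition for $A^\dagger \subseteq B^\dagger$.

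\emph{First} I would observe that conditions (iii) and (iv) are by design exactly the conditions (iii) and (iv) of Theorem \ref{thm: eqdefunital} applied to $A^\dagger \subseteq B^\dagger$, since they already quantify over self-adjoints (resp.\ orthogonal positive pairs) in $A^\dagger$ and ask for approximants in $B^\dagger$. Thus the equivalence (iii)$\iff$(iv) is immediate from the unital theorem. \emph{Next}, the implication (i)$\implies$(ii) requires showing that real rank zero of the inclusion $A \subseteq B$ transfers to $A^\dagger \subseteq B^\dagger$; here I would note that real rank zero of $A \subseteq B$ as in Definition \ref{defn: RR0inclusions} only references hereditary subalgebras $\overline{aBa}$ for $a \in A_+$, and for such $a$ one has $\overline{aBa} = \overline{aB^\dagger a}$, so $A \subseteq B$ having real rank zero is equivalent to $A \subseteq B^\dagger$ having real rank zero. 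Condition (ii) of the present statement quantifies over self-adjoints in $A$ (not $A^\dagger$), which matches condition (ii) of the unital theorem applied to the inclusion $A \subseteq B^\dagger$ after unitising the \emph{subalgebra}.

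The cleanest route is therefore to apply Theorem \ref{thm: eqdefunital} to the unital inclusion $A^\dagger \subseteq B^\dagger$ and then peel off the unit. Condition (i) for $A \subseteq B$ is equivalent to condition (i) for $A^\dagger \subseteq B^\dagger$: the forward direction is clear since every $\overline{aBa}$ with $a \in A_+$ is also a hereditary subalgebra arising from $A^\dagger \subseteq B^\dagger$, and for the reverse one checks that an element of $A^\dagger_+$ has the form $\lambda 1 + a$ and that the scalar part does not obstruct the existence of an approximate unit of projections in the relevant hereditary subalgebra. The implication (ii)$\implies$(i) does \emph{not} use unitality---this was explicitly flagged in the proof of Theorem \ref{thm: eqdefunital}---so that arrow carries over verbatim. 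The remaining implication (i)$\implies$(iv), I would deduce by first upgrading (i) for $A \subseteq B$ to (i) for $A^\dagger \subseteq B^\dagger$ and then invoking (i)$\implies$(iv) of the unital theorem.

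\emph{The main obstacle} is the passage of real rank zero from the inclusion $A \subseteq B$ to the unitised inclusion $A^\dagger \subseteq B^\dagger$, i.e.\ verifying condition (i) for positive elements of the form $\lambda 1_{B^\dagger} + a$ with $\lambda > 0$ and $a \in A$ self-adjoint. For such elements the hereditary subalgebra $\overline{(\lambda 1 + a)B^\dagger(\lambda 1 + a)}$ need not be contained in $B$, so the approximate unit of projections must be produced in $B^\dagger$ directly. I expect to handle this by using the equivalence with condition (ii) (which I will have already established for the inclusion $A \subseteq B$, and which only references self-adjoints in $A$), since finite-spectrum approximation is manifestly inherited under unitisation: a finite-spectrum self-adjoint approximant $b \in B$ to $a \in A$ gives $\lambda 1 + b \in B^\dagger$ as a finite-spectrum approximant to $\lambda 1 + a$. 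This lets me verify condition (ii) for the unital inclusion $A^\dagger \subseteq B^\dagger$ and then close the loop via the unital theorem.
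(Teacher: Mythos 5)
Much of your reduction is sound and matches the paper: the equivalence (iii)$\iff$(iv) is indeed immediate from Theorem \ref{thm: eqdefunital} applied to $A^{\dagger}\subseteq B^{\dagger}$; the implication (ii)$\implies$(i) does carry over verbatim, since that step of the unital proof never uses the unit; and the two-way passage between condition (ii) for $A\subseteq B$ and condition (ii) for $A^{\dagger}\subseteq B^{\dagger}$ (add the scalar in one direction, peel it off in the other, using that the character $B^{\dagger}\to\mathbb{C}$ kills $a\in A$, so the scalar part of any approximant is small) is exactly how the paper passes from (iv) to (ii). The genuine gap is the remaining direction, namely getting anything out of hypothesis (i). Your route is: (i) for $A\subseteq B$ implies (i) for $A^{\dagger}\subseteq B^{\dagger}$, then invoke the unital theorem. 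You correctly flag this upgrade as the main obstacle --- the hereditary subalgebras $\overline{(\lambda 1+a)B^{\dagger}(\lambda 1+a)}$ with $\lambda>0$ are not controlled by Definition \ref{defn: RR0inclusions} --- but your proposed resolution is circular: you handle it ``using the equivalence with condition (ii)\ldots which I will have already established for the inclusion $A\subseteq B$.'' At that stage you have established only (ii)$\implies$(i); the implication (i)$\implies$(ii) for $A\subseteq B$ is, by your own framing in your second paragraph, exactly what the upgrade was introduced to prove. So the proposal as written yields (ii)$\iff$(iii)$\iff$(iv)$\implies$(i) but never closes the loop starting from (i). (A minor additional slip: your ``forward direction is clear'' justification --- that the hereditary subalgebras coming from $A$ form a subfamily of those coming from $A^{\dagger}$ --- proves the downward implication, from (i) for $A^{\dagger}\subseteq B^{\dagger}$ to (i) for $A\subseteq B$, not the upgrade you need.)

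The paper breaks this circle by proving (i)$\implies$(iv) directly, with no upgrade at all: if $x,y\in A^{\dagger}$ are positive with $xy=0$, then comparing scalar parts under the character $A^{\dagger}\to\mathbb{C}$ shows that at least one of them, say $x$, lies in $A$; condition (i) then yields a projection $p\in\overline{xBx}$ with $\|px-x\|\leq\epsilon$, and $p\in\overline{xBx}$ together with $xy=0$ forces $py=0$, so $\|(1-p)y-y\|=0$. This single observation is what your argument is missing; with it in place, the equivalence of (i) for $A\subseteq B$ with (i) for $A^{\dagger}\subseteq B^{\dagger}$ becomes a corollary of the full theorem rather than a lemma needed inside its proof. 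If you want to keep your architecture, you must replace the circular upgrade by this (or some other independent) derivation of one of (ii), (iii), (iv) from (i).
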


\begin{proof}
Conditions (iii) and (iv) are equivalent by applying Theorem \ref{thm: eqdefunital} to the unital inclusion $A^{\dagger}\subseteq B^{\dagger}$. Suppose condition (i) holds and we claim it implies (iv). If $x,y\in A^{\dagger}$ are orthogonal, then at least one of $x$ and $y$ is in $A$. Suppose that $x\in A$ and let $p\in\overline{xBx}$ be a projection such that $\|px-x\|\leq\epsilon$. Since $xy=0$ and $p\in \overline{xBx}$, it also follows that $py=0$. 

Following the remark in the proof of Theorem \ref{thm: eqdefunital}, the fact that (ii) implies (i) is obtained in the proof of (ii)$\implies$(i) in Theorem \ref{thm: eqdefunital}. Then, it only remains to prove that (iv) implies (ii). Let $a\in A$ be self-adjoint, $\epsilon>0$ and suppose that (iv) holds. Then, applying Theorem \ref{thm: eqdefunital} to the inclusion $A^{\dagger}\subseteq B^{\dagger}$, there exists $b\in B^{\dagger}$ self-adjoint with finite spectrum such that $\|a-b\|\leq\epsilon/2$. Then $b=b_0+\lambda 1$, so $b_0\in B$ is self-adjoint, has finite spectrum, and $\|a-b_0\|\leq \epsilon$. Hence, the conclusion follows.
\end{proof}

\begin{remark}
In light of Theorem \ref{thm: eqdefnonunital}, we could have chosen any of the conditions above to characterise when an inclusion $A\subseteq B$ has real rank zero. However, we opted for the one appearing in Definition \ref{defn: RR0inclusions} because it can be stated both in the unital and non-unital case.
\end{remark}

We connect our definition to the usual notion of real rank zero for a $\C$-algebra, originally articulated in \cite{rr0}. Naturally, if at least one of the $\C$-algebras has real rank zero, then the inclusion has real rank zero.

\begin{lemma}\label{lemma: domainorcodRR0}
Let $A\subseteq B$ be an inclusion of $\C$-algebras. If there exists an intermediate $\C$-algebra with real rank zero, then the inclusion $A\subseteq B$ has real rank zero. In particular, if either $A$ or $B$ has real rank zero, then the inclusion $A\subseteq B$ has real rank zero. Moreover, a $\C$-algebra $C$ has real rank zero if and only if the inclusion $C\subseteq C$ has real rank zero.
\end{lemma}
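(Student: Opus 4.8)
The plan is to prove the three assertions in sequence, treating the first as the substantive one and deriving the other two quickly. For the first claim, suppose $D$ is an intermediate $\C$-algebra, $A \subseteq D \subseteq B$, with $D$ of real rank zero, and fix a nonzero positive $a \in A \subseteq D$. By characterisation (iii) of Theorem \ref{thm: RR0Alg} applied to $D$, the hereditary subalgebra $\overline{aDa}$ admits an approximate unit $(e_\lambda)$ consisting of projections, and each $e_\lambda$ is a projection in $B$ lying in $\overline{aDa} \subseteq \overline{aBa}$. I would then argue that this same family $(e_\lambda)$ is in fact an approximate unit for the (potentially larger) hereditary subalgebra $\overline{aBa}$, which is exactly what is needed to conclude that $A \subseteq B$ has real rank zero.

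The key step is the following absorption argument. Since $a$ belongs to the hereditary subalgebra it generates, we have $a \in \overline{aDa}$, whence $\|e_\lambda a - a\| \to 0$ and $\|a e_\lambda - a\| \to 0$. Given $x \in \overline{aBa}$ and $\epsilon > 0$, choose $b \in B$ with $\|x - aba\| < \epsilon$. Then $\|e_\lambda(aba) - aba\| = \|(e_\lambda a - a)\, b a\| \le \|e_\lambda a - a\|\,\|b\|\,\|a\| \to 0$, and combining this with the triangle inequality (using $\|e_\lambda\|\le 1$) gives $\limsup_\lambda \|e_\lambda x - x\| \le 2\epsilon$; letting $\epsilon \to 0$ yields $\|e_\lambda x - x\| \to 0$, and the symmetric computation gives $\|x e_\lambda - x\| \to 0$. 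Thus $\overline{aBa}$ has an approximate unit of projections. I expect this transfer of the approximate unit from $\overline{aDa}$ to $\overline{aBa}$ to be the only genuine obstacle, and it rests entirely on the fact that left and right multiplication by $a$ is absorbed by an approximate unit of the smaller hereditary subalgebra.

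The second assertion is then immediate upon taking the intermediate algebra to be $D = A$ or $D = B$, since a $\C$-algebra of real rank zero satisfies condition (iii) of Theorem \ref{thm: RR0Alg}. For the final equivalence I would compare characterisations directly rather than reprove anything: by Theorem \ref{thm: eqdefnonunital} applied to $A = B = C$, the inclusion $C \subseteq C$ has real rank zero if and only if its condition (ii) holds, namely that every self-adjoint element of $C$ is approximated arbitrarily well by self-adjoint elements of $C$ with finite spectrum. This is precisely condition (ii) of Theorem \ref{thm: RR0Alg}, which characterises real rank zero of $C$. Hence $C$ has real rank zero exactly when the inclusion $C \subseteq C$ does.
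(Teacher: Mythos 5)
Your proof is correct, but for the main claim it takes a genuinely different route from the paper. The paper never touches approximate units directly: given a self-adjoint $a\in A$, it invokes condition (ii) of Theorem \ref{thm: RR0Alg} to produce a finite-spectrum self-adjoint $b$ in the intermediate algebra, notes that $b\in B$, and concludes by the implication (ii)$\implies$(i) of Theorem \ref{thm: eqdefnonunital}; all the real work is thus outsourced to the equivalence theorem established earlier. You instead verify Definition \ref{defn: RR0inclusions} by hand: you take the approximate unit of projections of $\overline{aDa}$ supplied by condition (iii) of Theorem \ref{thm: RR0Alg} and show, via the absorption estimate $\|e_\lambda (aba)-aba\|\leq\|e_\lambda a-a\|\,\|b\|\,\|a\|$ together with $a\in\overline{aDa}$, that the same projections already form an approximate unit for the larger hereditary subalgebra $\overline{aBa}$. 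This argument is sound (the density of $aBa$ in $\overline{aBa}$ and the bound $\|e_\lambda\|\leq 1$ are exactly what the $2\epsilon$-estimate needs). What each approach buys: yours is independent of the nontrivial implication (ii)$\implies$(i) of Theorem \ref{thm: eqdefnonunital}, relying only on the classical Brown--Pedersen characterisation plus a standard absorption trick, and it yields the extra information that the approximate unit for $\overline{aBa}$ may be chosen inside $\overline{aDa}$; the paper's argument is shorter given the machinery already in place and illustrates how the equivalent characterisations are intended to be used. For the final equivalence ($C$ has real rank zero if and only if $C\subseteq C$ does) your argument and the paper's coincide: both match condition (ii) of Theorem \ref{thm: eqdefnonunital} against condition (ii) of Theorem \ref{thm: RR0Alg}.
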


\begin{proof}
Suppose first that there exists a $\C$-algebra $C$ with real rank zero such that $A\subseteq C\subseteq B$. We claim that the inclusion $A\subseteq B$ has real rank zero. For this, let $a\in A$ be self-adjoint and $\epsilon>0$. By Theorem \ref{thm: RR0Alg}, there exists a self-adjoint $b\in C$ with finite spectrum such that $\|a-b\|\leq \epsilon$. Since $b\in B$, we conclude that $A\subseteq B$ has real rank zero by Theorem \ref{thm: eqdefnonunital}. In particular, it follows that if $A$ or $B$ has real rank zero, then the inclusion $A\subseteq B$ has real rank zero. Therefore, if $C$ is a $\C$-algebra with real rank zero, the inclusion $C\subseteq C$ has real rank zero. Suppose now that the inclusion $C\subseteq C$ has real rank zero. To show that $C$ has real rank zero, let $c\in C$ be self-adjoint and $\epsilon>0$. Then, by Theorem \ref{thm: eqdefnonunital}, there exists a self-adjoint $d\in C$ with finite spectrum such that $\|c-d\|\leq \epsilon$. Hence, $C$ has real rank zero by Theorem \ref{thm: RR0Alg}. 
\end{proof}

\begin{lemma}\label{lemma: CompHomsRR0}
Let $A,B$, and $C$ be $\C$-algebras.\ Then the set of $^*$-homomorphisms from $A$ to $B$ with real rank zero is point-norm closed. Moreover, if $\phi:C\to B$ and $\psi:A\to C$ are $^*$-homomorphisms, at least one with real rank zero, then $\phi\circ\psi$ has real rank zero.
\end{lemma}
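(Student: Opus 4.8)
The plan is to prove two separate assertions of Lemma \ref{lemma: CompHomsRR0}: first, that the set of real-rank-zero $^*$-homomorphisms $A\to B$ is point-norm closed, and second, the composition statement. For both, the strategy is to reduce everything to the characterisation of real rank zero for inclusions in terms of approximating self-adjoints by self-adjoints with finite spectrum, as established in Theorem \ref{thm: eqdefnonunital}(ii). Indeed, $\theta\colon A\to B$ has real rank zero precisely when $\theta(A)\subseteq B$ has real rank zero, which by Theorem \ref{thm: eqdefnonunital} is equivalent to: every self-adjoint element of $\theta(A)$ can be approximated arbitrarily well in norm by self-adjoint elements of $B$ with finite spectrum. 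Since a self-adjoint element of $\theta(A)$ has the form $\theta(a)$ for $a\in A$ self-adjoint, the condition reads: for every self-adjoint $a\in A$ and every $\epsilon>0$ there is a self-adjoint $b\in B$ with finite spectrum such that $\|\theta(a)-b\|\leq\epsilon$.

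For the point-norm closedness, suppose $(\theta_n)_{n\geq 1}$ is a net (or sequence) of $^*$-homomorphisms $A\to B$, each with real rank zero, converging point-norm to $\theta\colon A\to B$. Fix a self-adjoint $a\in A$ and $\epsilon>0$. First I would choose $n$ large enough that $\|\theta(a)-\theta_n(a)\|\leq\epsilon/2$. Since $\theta_n$ has real rank zero and $\theta_n(a)$ is self-adjoint in $\theta_n(A)$, there is a self-adjoint $b\in B$ with finite spectrum such that $\|\theta_n(a)-b\|\leq\epsilon/2$. The triangle inequality then yields $\|\theta(a)-b\|\leq\epsilon$, so $\theta$ satisfies condition (ii) of Theorem \ref{thm: eqdefnonunital} and therefore has real rank zero.

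For the composition statement, let $\phi\colon C\to B$ and $\psi\colon A\to C$ be $^*$-homomorphisms with at least one of them having real rank zero; I would treat the two cases separately. Fix a self-adjoint $a\in A$ and $\epsilon>0$; the element $\psi(a)\in C$ is self-adjoint and $\phi(\psi(a))$ is self-adjoint in $\phi(C)\supseteq(\phi\circ\psi)(A)$. If $\psi$ has real rank zero, then there is a self-adjoint $c\in C$ with finite spectrum such that $\|\psi(a)-c\|\leq\epsilon$; applying the contractive $^*$-homomorphism $\phi$ gives $\|\phi(\psi(a))-\phi(c)\|\leq\epsilon$, and $\phi(c)\in B$ is self-adjoint with finite spectrum (the spectrum of $\phi(c)$ is contained in that of $c$ together with possibly $\{0\}$). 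If instead $\phi$ has real rank zero, then since $\phi(\psi(a))$ is a self-adjoint element of $\phi(C)$, the real rank zero property of the inclusion $\phi(C)\subseteq B$ directly supplies a self-adjoint $b\in B$ with finite spectrum and $\|\phi(\psi(a))-b\|\leq\epsilon$. In either case $(\phi\circ\psi)(A)\subseteq B$ satisfies Theorem \ref{thm: eqdefnonunital}(ii), so $\phi\circ\psi$ has real rank zero.

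The argument is essentially a routine bookkeeping exercise once the correct equivalent characterisation is invoked, so there is no serious obstacle; the only points requiring minor care are verifying that $\phi$ maps finite-spectrum self-adjoints to finite-spectrum self-adjoints (which holds because the image of an element under a $^*$-homomorphism has spectrum contained in the original spectrum union $\{0\}$, hence still finite) and keeping track of which algebra plays the role of the subalgebra in each invocation of Theorem \ref{thm: eqdefnonunital}. I expect the mild subtlety in the composition case to be ensuring the reduction works uniformly whether $\phi$ or $\psi$ carries the real rank zero hypothesis, which is why splitting into the two cases above is the cleanest route.
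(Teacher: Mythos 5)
Your proof is correct and takes essentially the same approach as the paper's: both arguments reduce to the finite-spectrum characterisation in Theorem \ref{thm: eqdefnonunital}(ii), handle the point-norm limit by a triangle-inequality argument, and treat the composition by pushing a finite-spectrum approximant through $\phi$ (respectively, invoking real rank zero of the inclusion $\phi(C)\subseteq B$ directly). The only difference is that you make explicit some details the paper leaves implicit, such as why $\phi(c)$ still has finite spectrum.
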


\begin{proof}
Let $\theta_n:A\to B$ be a sequence of $^*$-homomorphisms with real rank zero converging pointwise to $\theta:A\to B$. Then $\theta$ has real rank zero by using condition (ii) in Theorem \ref{thm: eqdefnonunital} for the sequence of inclusions $\theta_n(A)\subseteq B$. Suppose now $\phi:C\to B$ and $\psi:A\to C$ are $^*$-homomorphisms and $\psi$ has real rank zero. Let $a\in A$ be self-adjoint and $\epsilon>0$. Then there exists $c\in C$ self-adjoint with finite spectrum such that $\|\psi(a)-c\|\leq\epsilon$. Therefore, $\phi(c)$ is a self-adjoint element with finite spectrum and $\|\phi(\psi(a))-\phi(c)\|\leq \epsilon$. Hence $\phi\circ\psi$ has real rank zero. The case when $\phi$ has real rank zero follows similarly.
\end{proof}

\begin{lemma}\label{lemma: factRR0}
Let $A\subseteq B$ be an inclusion of $\C$-algebras. If the inclusion $A\subseteq B$ is approximately unitarily equivalent to a map factoring through a $\C$-algebra of real rank zero, then $A\subseteq B$ has real rank zero.
\end{lemma}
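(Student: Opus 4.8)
The plan is to reduce the statement to the composition and point-norm closure properties established in Lemma \ref{lemma: CompHomsRR0}, together with the elementary observation that real rank zero of a $^*$-homomorphism is preserved under conjugation by unitaries. First I would unpack the hypothesis: there is a $\C$-algebra $C$ of real rank zero, $^*$-homomorphisms $\psi\colon A\to C$ and $\phi\colon C\to B$, and a sequence of unitaries $(u_n)_{n\geq 1}$ in the forced unitisation $B^{\dagger}$ (or, equally well, the multiplier algebra) of $B$ such that the inclusion $\iota\colon A\hookrightarrow B$ satisfies $\|u_n\phi(\psi(a))u_n^*-a\|\to 0$ for every $a\in A$. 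Since $C$ has real rank zero, Lemma \ref{lemma: domainorcodRR0} shows that the inclusion $\psi(A)\subseteq C$ has real rank zero, i.e.\ that $\psi$ has real rank zero; Lemma \ref{lemma: CompHomsRR0} then yields that $\phi\circ\psi$ has real rank zero.

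The key remaining ingredient is that conjugation by a unitary preserves this property. For any unitary $u\in B^{\dagger}$ the inner automorphism $\Ad(u)$ restricts to an automorphism of $B$, and for positive $a\in A$ and any $^*$-homomorphism $\theta\colon A\to B$ one has $\overline{\Ad(u)(\theta(a))\,B\,\Ad(u)(\theta(a))}=\Ad(u)\big(\overline{\theta(a)B\theta(a)}\big)$. An approximate unit of projections for the algebra on the right is therefore carried by $\Ad(u)$ to one for the algebra on the left, so $\Ad(u)\circ\theta$ has real rank zero whenever $\theta$ does. Applying this to $\theta=\phi\circ\psi$ and to each $u_n$ shows that every $\Ad(u_n)\circ(\phi\circ\psi)\colon A\to B$ has real rank zero.

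Finally, the displayed estimate says precisely that $\iota$ is the point-norm limit of the $^*$-homomorphisms $\Ad(u_n)\circ(\phi\circ\psi)$. Since each of these has real rank zero and the set of real-rank-zero $^*$-homomorphisms from $A$ to $B$ is point-norm closed by Lemma \ref{lemma: CompHomsRR0}, the map $\iota$ has real rank zero; that is, the inclusion $A\subseteq B$ has real rank zero. I do not anticipate a genuine obstacle: the only point requiring a little care is where the implementing unitaries are taken to live in the possibly non-unital setting, but since $\Ad(u)$ restricts to an automorphism of $B$ for any unitary $u$ in $B^{\dagger}$ or $M(B)$, the argument is insensitive to that choice.
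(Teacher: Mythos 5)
Your proof is correct and takes essentially the same route as the paper's: unpack the factorisation, use Lemma \ref{lemma: domainorcodRR0} together with the composition part of Lemma \ref{lemma: CompHomsRR0} to see that each $\Ad(u_n)\circ\phi\circ\psi$ has real rank zero, and then conclude by point-norm closedness. The only cosmetic differences are that the paper dispenses with your separate conjugation step by viewing $\Ad(u_\lambda)\circ\phi$ as a single $^*$-homomorphism from $C$ to $B$ (so the composition lemma applies directly, with $\psi$ the map of real rank zero), and that it works with a net of multiplier unitaries rather than a sequence, which your argument accommodates without change.
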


\begin{proof}
Let $C$ be a $\C$-algebra of real rank zero, $\psi:A\to C$ and $\phi:C\to B$ be $^*$-homomorphisms, and $(u_\lambda)_{\lambda}$ be a net of unitaries in the multiplier algebra of $B$ such that $\Ad(u_\lambda)\circ\phi\circ\psi$ converges pointwise to the inclusion $A\subseteq B$.

Since $C$ has real rank zero, combining Lemma \ref{lemma: domainorcodRR0} and Lemma \ref{lemma: CompHomsRR0} shows that the net of $^*$-homomorphisms $\Ad(u_\lambda)\circ\phi\circ\psi$ have real rank zero.\ Then, by the first part of the Lemma \ref{lemma: CompHomsRR0}, we get that the inclusion $A\subseteq B$ has real rank zero.
\end{proof}

\section{Inclusions of commutative $\C$-algebras}\label{sect: CommCase}

In this section we will give a complete description of when an inclusion of commutative $\C$-algebras has real rank zero.

\begin{theorem}\label{thm: commutativecase}
 Let $X,Y$ be compact Hausdorff spaces and $\phi:Y\to X$ be a continuous map which induces an inclusion $C(X)\subseteq C(Y)$ denoted by $\phi^*$.\ Then the inclusion $C(X)\subseteq C(Y)$ has real rank zero if and only if there exists an intermediate commutative $\C$-algebra with totally disconnected spectrum. 
\end{theorem}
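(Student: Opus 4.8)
The plan is to prove the two implications separately, with the forward (``if'') direction being essentially immediate and the converse carrying all the content. For ``if'', suppose $C(X)\subseteq C(Z)\subseteq C(Y)$ with $Z$ totally disconnected. It is classical that a commutative unital $\C$-algebra has real rank zero exactly when its spectrum is totally disconnected, so $C(Z)$ has real rank zero; Lemma \ref{lemma: domainorcodRR0} then immediately gives that the inclusion $C(X)\subseteq C(Y)$ has real rank zero.

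For the ``only if'' direction, the key observation is that finite-spectrum approximation forces constancy on connected components. Since $\phi^*$ is an inclusion, $\phi$ is surjective. Fix a real-valued $f\in C(X)$ and $\epsilon>0$. By Theorem \ref{thm: eqdefunital}, condition (ii), there is a self-adjoint $g\in C(Y)$ with finite spectrum such that $\|f\circ\phi-g\|\leq\epsilon$. A real-valued continuous function with finite range is constant on each connected component of $Y$; hence for any two points $y_1,y_2$ lying in the same component one has $g(y_1)=g(y_2)$, and the triangle inequality gives $|f(\phi(y_1))-f(\phi(y_2))|\leq 2\epsilon$. Letting $\epsilon\to 0$ shows that $f\circ\phi$ is constant on each connected component of $Y$, for every $f\in C(X)$. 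As real-valued functions separate the points of the compact Hausdorff space $X$, I conclude that $\phi$ itself is constant on each connected component of $Y$.

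I would then factor $\phi$ through the space of connected components. Let $Z$ be the quotient of $Y$ by the relation identifying points in the same connected component, with quotient map $\pi\colon Y\to Z$. For a compact Hausdorff space the connected components coincide with the quasi-components, and $Z$ is again compact, Hausdorff and totally disconnected. Because $\phi$ is constant on the fibres of $\pi$, the universal property of quotient maps yields a unique continuous factorisation $\phi=\psi\circ\pi$ with $\psi\colon Z\to X$; surjectivity of $\phi$ forces $\psi$ to be surjective, while $\pi$ is surjective by construction. Dualising gives injective $^*$-homomorphisms $C(X)\xrightarrow{\psi^*}C(Z)\xrightarrow{\pi^*}C(Y)$ with $\pi^*\psi^*=\phi^*$, so $\pi^*(C(Z))$ is an intermediate commutative $\C$-algebra whose spectrum $Z$ is totally disconnected, as required.

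The main obstacle is the topological input in the last paragraph: verifying that the quotient of a compact Hausdorff space by its connected components is again compact, Hausdorff and totally disconnected (equivalently, that components equal quasi-components and that the component relation is closed). I expect to either quote this standard fact or, to keep the argument self-contained and purely $\C$-algebraic, replace $C(Z)$ by the subalgebra $D=\{g\in C(Y): g \text{ is constant on each connected component of } Y\}$, check directly that $D\supseteq\phi^*(C(X))$ by the previous paragraph, and argue that the projections of $D$ (characteristic functions of clopen subsets of $Y$) separate the points of its spectrum, so that the spectrum of $D$ is totally disconnected. The genuinely new step, by contrast, is the elementary but crucial observation that condition (ii) of Theorem \ref{thm: eqdefunital} propagates from $C(X)$ to constancy on connected components.
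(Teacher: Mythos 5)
Your proposal is correct and takes essentially the same route as the paper: both prove the ``only if'' direction by showing that $\phi$ is constant on each connected component of $Y$ and then factoring $\phi$ through the component space $Y_c$, whose compactness, Hausdorffness and total disconnectedness rest on the same standard fact that components equal quasi-components in compact Hausdorff spaces. The only (cosmetic) difference is that you derive constancy from condition (ii) of Theorem \ref{thm: eqdefunital} (finite-spectrum approximants are locally constant), whereas the paper uses condition (iii) (invertible approximants) together with an intermediate-value argument.
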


\begin{proof}
The if direction follows from Lemma \ref{lemma: factRR0} and \cite[Proposition 1.1]{rr0}. Conversely, let us suppose that the inclusion has real rank zero. We first claim that $\phi$ is constant on each connected component of $Y$. To prove it, let $Y_0$ be a connected component of $Y$ and $y_1,y_2\in Y_0$. If $Y_0$ is a singleton, we are done, so let us assume that $y_1$ and $y_2$ are distinct. Suppose for a contradiction that $\phi(y_1)\neq \phi(y_2)$. By Urysohn's lemma, we then pick a self-adjoint $f\in C(X)$ such that $f(\phi(y_1))<0$ and $f(\phi(y_2))>0$. Let $\epsilon>0$ such that $2\epsilon<\min\{f(\phi(y_2)),|f(\phi(y_1))|\}$. 

Since $\phi^*$ has real rank zero, there exists an invertible self-adjoint $g\in C(Y)$ such that $|g(y)-\phi^*(f)(y)|\leq\epsilon$ for all $y\in Y$. In particular, $$g(y_1)\leq f(\phi(y_1))+\epsilon<0 \ \text{and} \ g(y_2)\geq f(\phi(y_2))-\epsilon>0.$$ Since $g$ is continuous and $Y_0$ is connected, $g(Y_0)\subseteq \mathbb{R}$ is connected, so there is $y_0\in Y_0$ such that $g(y_0)=0$.\ This contradicts the assumption that $g$ is invertible.\ Thus, $\phi(y_1)=\phi(y_2)$, which implies that $\phi$ is constant on $Y_0$.

Let $Y_c$ be the set of connected components of $Y$ equipped with the quotient topology. Since $\phi$ is constant on each connected component, there exists a continuous map $\psi:Y_c\to X$ given by evaluating $\phi$ on each connected component. Then $\phi=\psi\circ\pi$, where $\pi:Y\to Y_c$ is the canonical quotient map. Therefore, by duality, the diagram
\[
\begin{tikzcd}
    C(X) \arrow[rr, "\phi^*"] \arrow[swap, dr, "\psi^*"] & & C(Y) \\
    & C(Y_c) \arrow[swap, ur, "\pi^*"]
\end{tikzcd}
\]commutes, where $\psi^*$ and $\pi^*$ are the dual maps of $\psi$ and $\pi$ respectively. Since $Y$ is compact and Hausdorff, it follows that $Y_c$ is compact and totally disconnected. It is well-known that $Y_c$ is Hausdorff. To see this, note that in any compact Hausdorff space, connected components are quasicomponents and hence closed (see for example \cite[Proposition 1.4]{Top67}). As $Y$ is compact and Hausdorff, and in particular normal, any two connected components are contained in disjoint open sets. Consequently, the quotient space $Y_c$ is Hausdorff. Hence, $C(Y_c)$ has real rank zero by \cite[Proposition 1.1]{rr0}. Moreover, $\pi$ is surjective, which implies that $\pi^*$ is injective by duality. Hence $\pi^*(C(Y_c))$ has real rank zero.\ Since $C(X)\subseteq \pi^*(C(Y_c))\subseteq C(Y)$, we reach the conclusion.
\end{proof}

\begin{cor}\label{rmk: commmorphismsRR0}
 Let $X,Y$ be compact Hausdorff spaces and $\phi:Y\to X$ be a continuous map which induces a $^*$-homomorphism $\phi^*:C(X)\to C(Y)$.\ Then $\phi^*$ has real rank zero if and only if it factors through a commutative $\C$-algebra of real rank zero.
\end{cor}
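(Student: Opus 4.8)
The plan is to reduce to Theorem~\ref{thm: commutativecase}; the only discrepancy is that there $\phi^*$ is injective (equivalently $\phi$ is surjective), whereas in the corollary $\phi$ need not be surjective, so the first task is to manufacture a surjection onto a closed subspace.

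For the \emph{if} direction I would argue directly. Suppose $\phi^*=\beta\circ\alpha$ with $\alpha\colon C(X)\to D$ and $\beta\colon D\to C(Y)$ for some commutative $\C$-algebra $D$ of real rank zero. Since $D$ has real rank zero, Lemma~\ref{lemma: domainorcodRR0} shows the inclusion $\alpha(C(X))\subseteq D$ has real rank zero, i.e.\ $\alpha$ has real rank zero, and then Lemma~\ref{lemma: CompHomsRR0} yields that $\phi^*=\beta\circ\alpha$ has real rank zero. (This is also immediate from Lemma~\ref{lemma: factRR0} using the trivial net of unitaries; note that commutativity of $D$ plays no role here.)

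For the \emph{only if} direction, assume $\phi^*$ has real rank zero and set $Z=\phi(Y)\subseteq X$. Since $Y$ is compact and $X$ Hausdorff, $Z$ is closed, hence compact Hausdorff. I would then factor $\phi=\iota\circ\bar\phi$, where $\bar\phi\colon Y\to Z$ is the corestriction (a continuous surjection) and $\iota\colon Z\hookrightarrow X$ is the inclusion. Dualising gives $\phi^*=\bar\phi^*\circ\iota^*$, with $\iota^*\colon C(X)\to C(Z)$ the surjective restriction map and $\bar\phi^*\colon C(Z)\to C(Y)$ injective (as $\bar\phi$ is onto). The key observation is that $\phi^*(C(X))=\bar\phi^*(C(Z))$, so the injective inclusion $C(Z)\subseteq C(Y)$ induced by $\bar\phi$ inherits real rank zero. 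This inclusion now has exactly the shape required by Theorem~\ref{thm: commutativecase}, which supplies an intermediate commutative $\C$-algebra $E$ with totally disconnected spectrum, $C(Z)\subseteq E\subseteq C(Y)$; such an $E$ has real rank zero by \cite[Proposition 1.1]{rr0}. Composing, $\phi^*$ factors as
$$C(X)\xrightarrow{\ \iota^*\ }C(Z)\hookrightarrow E\hookrightarrow C(Y),$$
exhibiting the desired factorisation through a commutative $\C$-algebra of real rank zero.

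I expect the only substantive step to be the passage to the image $Z=\phi(Y)$: one must check that corestricting does not shrink the image subalgebra, which is precisely the surjectivity of $\iota^*$ (Tietze extension), guaranteeing $\phi^*(C(X))=\bar\phi^*(C(Z))$. Everything else is routine bookkeeping and direct appeals to the lemmas already established.
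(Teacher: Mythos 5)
Your proposal is correct and takes essentially the same route as the paper: the \emph{if} direction via Lemmas~\ref{lemma: domainorcodRR0} and~\ref{lemma: CompHomsRR0}, and the \emph{only if} direction by identifying the image $\phi^*(C(X))$ with $C(Z)$ and applying Theorem~\ref{thm: commutativecase} together with \cite[Proposition 1.1]{rr0}. The only cosmetic difference is that the paper obtains $Z$ abstractly by Gelfand duality, whereas you realise it concretely as $Z=\phi(Y)$ with the Tietze argument; this is the same identification spelled out explicitly.
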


\begin{proof}
The if direction follows from Lemma \ref{lemma: CompHomsRR0}. Conversely, suppose that $\phi^*$ has real rank zero. Note that $\phi^*(C(X))$ is a commutative $\C$-algebra, so we can identify it with some $\C$-algebra $C(Z)$ by Gelfand duality. Then the conclusion follows by applying Theorem \ref{thm: commutativecase} to the inclusion $C(Z)\subseteq C(Y)$ and \cite[Proposition 1.1]{rr0}.
\end{proof}

We use Corollary \ref{rmk: commmorphismsRR0} to characterise $^*$-homomorphisms between commutative $\C$-algebras which have \emph{nuclear dimension equal} to zero. Let us first record the following definition.

\begin{definition}(\cite{TW14,BGSW22})
Let $A$ and $B$ be $\C$-algebras and $\theta : A \to B$ be a $^*$-homomorphism. It is said that $\theta$ has \emph{nuclear dimension equal to zero} if for any finite set $\mathcal{F}\subseteq A$ and $\epsilon>0$, there exist a finite dimensional $\C$-algebra $F$ and cpc maps $$\psi :A\to F \ \text{and} \ \eta :F \to B$$ such that 
\begin{enumerate}
    \item $\|\eta(\psi(a))-\theta(a)\|\leq\epsilon$ for all $a\in\mathcal{F}$;
    \item $\eta$ is cpc order zero.\footnote{Recall that a completely positive and contractive (cpc) map $\phi: A\to B$ between $\C$-algebras is said to be \emph{order zero} if, for every $a,b \in A_+$ with $ab=0$, one has $\phi(a)\phi(b)=0$. The general theory for these maps was developed in \cite{orderzero}.}
\end{enumerate}
\end{definition}
Moreover, a $\C$-algebra $A$ is said to have nuclear dimension equal to zero if the identity map on $A$ has nuclear dimension equal to zero. It is known that a separable $\C$-algebra has nuclear dimension equal to zero if and only if it is approximately finite dimensional ({\cite[Theorem 4.2.3]{Win03}}).

\begin{prop}\label{prop: NucDimvsRR0}
Let $A,B$ be unital $\C$-algebras and $\theta:A\to B$ be a unital $^*$-homomorphism with nuclear dimension equal to zero. Then $\theta$ has real rank zero.
\end{prop}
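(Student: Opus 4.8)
The plan is to verify condition (ii) of Theorem \ref{thm: eqdefunital} for the unital inclusion $\theta(A)\subseteq B$; note that $1_B=\theta(1_A)\in\theta(A)$ since $\theta$ is unital, so the hypotheses of that theorem are met. Every self-adjoint element of $\theta(A)$ has the form $\theta(a)$ with $a=a^*\in A$ (replace a lift $x$ by $(x+x^*)/2$), so it suffices to show: for each self-adjoint $a\in A$ and each $\epsilon>0$ there is a self-adjoint $b\in B$ with finite spectrum and $\|\theta(a)-b\|\leq\epsilon$.

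Fix such an $a$ and $\epsilon$, and let $\epsilon_0\in(0,1)$ be chosen at the end. I would apply the hypothesis (nuclear dimension zero of $\theta$) to the finite set $\mathcal{F}=\{a,1_A\}$ with tolerance $\epsilon_0$, obtaining a finite-dimensional $F$, a cpc map $\psi\colon A\to F$, and a cpc order zero map $\eta\colon F\to B$ with $\|\eta(\psi(a))-\theta(a)\|\leq\epsilon_0$ and $\|\eta(\psi(1_A))-1_B\|\leq\epsilon_0$. Since $\psi$ is cpc, it is self-adjoint preserving, so $\psi(a)$ is a self-adjoint element of the finite-dimensional algebra $F$ and therefore has finite spectrum.

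The main idea is to use the Winter--Zacharias structure theorem for order zero maps (\cite{orderzero}) to promote $\eta$ to an honest $^*$-homomorphism. Write $\eta=h\,\pi$, where $\pi\colon F\to M(C^*(\eta(F)))$ is a $^*$-homomorphism, $h=\eta(1_F)\geq 0$ is a contraction, and $h$ commutes with $\pi(F)$. The key point is that unitality forces $h$ to be close to $1_B$: with $g=\pi(\psi(1_A))$, the elements $h$ and $g$ are commuting positive contractions satisfying $\|hg-1_B\|=\|\eta(\psi(1_A))-1_B\|\leq\epsilon_0$, and for commuting positive contractions a product within $\epsilon_0$ of the unit forces each factor within $\epsilon_0$ of the unit (since $h\geq hg\geq(1-\epsilon_0)1_B$ and $h\leq 1_B$). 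Hence $\|h-1_B\|\leq\epsilon_0<1$, so $h$ is invertible in $B$, and consequently $\pi(x)=h^{-1}\eta(x)\in B$ for all $x\in F$; that is, $\pi$ is a genuine $^*$-homomorphism into $B$.

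Now set $b=\pi(\psi(a))\in B$. As $\pi$ is a $^*$-homomorphism and $\psi(a)$ has finite spectrum, $b$ is self-adjoint with finite spectrum. Using $\eta(\psi(a))=h\,b$ together with $\|h-1_B\|\leq\epsilon_0$ and $\|b\|=\|\pi(\psi(a))\|\leq\|\psi(a)\|\leq\|a\|$, I would estimate
\[
\|\theta(a)-b\|\leq\|\theta(a)-\eta(\psi(a))\|+\|(h-1_B)b\|\leq\epsilon_0+\epsilon_0\|a\|=\epsilon_0(1+\|a\|).
\]
Choosing $\epsilon_0<\min\{1,\epsilon/(1+\|a\|)\}$ yields $\|\theta(a)-b\|\leq\epsilon$, verifying condition (ii) and finishing the argument. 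The only genuine obstacle is manufacturing an approximant with \emph{finite} spectrum: a single order zero approximation merely gives $\theta(a)\approx\sum_i\lambda_i\eta(p_i)$ with mutually orthogonal positive $\eta(p_i)$, which may well have continuous spectrum. This is resolved exactly by the observation that the unital normalisation forces $h\approx 1_B$, so that the order zero map is invertibly close to the finite-dimensional $^*$-homomorphism $\pi$, which preserves finite spectra.
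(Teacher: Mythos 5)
Your argument is correct and reaches the same conclusion by the same broad strategy (factor through a finite-dimensional algebra, upgrade the order zero map to an honest $^*$-homomorphism, then invoke Theorem \ref{thm: eqdefunital}), but it distributes the work very differently. The paper's proof is short because it outsources the key step: it cites \cite[Lemma 3.1]{CN23} (building on \cite[Theorem 3.4]{Win03}) to obtain directly a finite-dimensional $F$, a ucp map $\psi\colon A\to F$, and a \emph{unital $^*$-homomorphism} $\eta\colon F\to B$ with $\|\theta(a)-\eta(\psi(a))\|<\epsilon/2$, and then verifies condition (iii) of Theorem \ref{thm: eqdefunital} by perturbing $\psi(a)$ to an invertible self-adjoint element of $F$. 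You instead reprove the content of that citation in the special case needed: starting from the raw definition (cpc $\psi$, cpc order zero $\eta$), you apply the structure theorem of \cite{orderzero} to write $\eta=h\pi$ and observe that evaluating the approximation at $1_A$ forces $\|h-1_B\|\leq\epsilon_0$, so that $\eta$ is invertibly close to the $^*$-homomorphism $\pi$; you then verify condition (ii) rather than (iii). This ``unitality pins down $h$'' mechanism is exactly what underlies the cited lemma, so your proof is a self-contained substitute for the paper's citation --- a genuine merit, at the cost of length. One step deserves an extra line of care: $\pi$ takes values in $M(C^*(\eta(F)))$, which is not a priori a subalgebra of $B$, so the formula $\pi(x)=h^{-1}\eta(x)\in B$ needs justification. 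It is easily supplied: since $\sigma_B(h)\subseteq[1-\epsilon_0,1]$ avoids $0$, one checks $1_B\in C^*(h)$, hence $C^*(\eta(F))$ is unital with unit $1_B$ and coincides with its own multiplier algebra inside $B$; alternatively, set $\pi'(x):=h^{-1}\eta(x)\in B$ and verify directly that $\pi'$ is a $^*$-homomorphism using the identities $h\eta(x)=\eta(x)h$ and $\eta(x)\eta(y)=h\,\eta(xy)$, which hold in $B$ by the structure theorem. Either patch is routine, so this is a presentational rather than a mathematical gap.
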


\begin{proof}
Let $a\in A$ be a self-adjoint contraction and $\epsilon>0$. Building on \cite[Theorem 3.4]{Win03}, there exist a finite dimensional $\C$-algebra $F$, a ucp map $\psi:A\to F$, and a unital $^*$-homomorphism $\eta:F\to B$ such that $\|\theta(a)-\eta(\psi(a))\|< \epsilon/2$ (\cite[Lemma 3.1]{CN23}).\footnote{Even though \cite[Lemma 3.1]{CN23} is only stated in the setting when $A$ and $B$ are separable, this assumption is not needed.} Since $\psi(a)$ is self-adjoint and $F$ is finite dimensional, there exists a self-adjoint invertible $b'\in F$ such that $\|\psi(a)-b'\|\leq \epsilon/2$. Therefore, $\eta(b')$ is self-adjoint invertible and $$\|\theta(a)-\eta(b')\|\leq \|\theta(a)-\eta \circ \psi(a)\|+\|\psi(a)-b'\|\leq \epsilon.$$ Hence, $\theta$ has real rank zero by Theorem \ref{thm: eqdefunital}.
\end{proof}

\begin{theorem}
Let $X,Y$ be compact Hausdorff spaces and $\phi:Y\to X$ be a continuous map which induces a $^*$-homomorphism $\phi^*:C(X)\to C(Y)$. The following are equivalent:
\begin{enumerate}
    \item $\phi^*$ has nuclear dimension equal to zero;
    \item $\phi^*$ has real rank zero;
    \item $\phi^*$ factors through a commutative $\C$-algebra with nuclear dimension equal to zero.
\end{enumerate}
\end{theorem}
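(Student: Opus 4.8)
The plan is to prove the cycle of implications $(i)\implies(ii)\implies(iii)\implies(i)$, exploiting the machinery already assembled in the excerpt so that most of the work reduces to invoking prior results. The statement concerns $^*$-homomorphisms between commutative $\C$-algebras, so throughout I would work with the factorisation picture supplied by Corollary \ref{rmk: commmorphismsRR0}, which tells us that real rank zero for such a map is \emph{equivalent} to factoring (exactly, not merely approximately) through a commutative $\C$-algebra of real rank zero.

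For $(i)\implies(ii)$ I would simply appeal to Proposition \ref{prop: NucDimvsRR0}. That proposition is stated for unital $^*$-homomorphisms between unital $\C$-algebras, and $\phi^*$ is automatically unital since $\phi$ is a map between compact spaces; hence nuclear dimension zero immediately yields real rank zero. For $(iii)\implies(i)$, suppose $\phi^*$ factors as $\phi^* = \eta\circ\psi$ through a commutative $\C$-algebra $D$ of nuclear dimension zero. A separable commutative $\C$-algebra of nuclear dimension zero is approximately finite dimensional, hence has real rank zero (being an inductive limit of finite-dimensional, indeed commutative finite-dimensional, algebras); more directly, its spectrum is totally disconnected, so $D$ has real rank zero by \cite[Proposition 1.1]{rr0}. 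Then $\phi^*$ factors through a $\C$-algebra of real rank zero, and Lemma \ref{lemma: CompHomsRR0} (composition of a map with a real-rank-zero map has real rank zero) gives that $\phi^*$ has real rank zero. One subtlety I would flag: nuclear dimension zero is phrased via completely positive order-zero maps rather than honest inductive limits, so to pass from $D$ having nuclear dimension zero to $D$ having real rank zero I would either restrict to the separable case and cite \cite[Theorem 4.2.3]{Win03}, or observe that a commutative algebra of nuclear dimension zero has totally disconnected spectrum and invoke \cite[Proposition 1.1]{rr0} directly.

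The substantive implication is $(ii)\implies(iii)$, and this is where I expect the main obstacle to lie. By Corollary \ref{rmk: commmorphismsRR0}, real rank zero of $\phi^*$ gives a factorisation $C(X)\xrightarrow{\psi^*} C(Z)\xrightarrow{\pi^*} C(Y)$ through a commutative $\C$-algebra $C(Z)$ of real rank zero, where $Z$ has totally disconnected spectrum. A totally disconnected compact Hausdorff space need not be second countable, but a commutative $\C$-algebra with totally disconnected spectrum is precisely one whose spectrum is an inverse limit of finite discrete spaces when metrisable, and in general is a direct limit of finite-dimensional commutative $\C$-algebras along a net. The point is that $C(Z)$, having totally disconnected spectrum, has nuclear dimension zero as a $\C$-algebra: the projections in $C(Z)$ (equivalently, the clopen partitions of $Z$) furnish the finite-dimensional approximations, with the connecting maps being honest $^*$-homomorphisms, so the identity on $C(Z)$ has nuclear dimension zero. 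Consequently $\pi^*$, being a composition with the identity on a nuclear-dimension-zero algebra, has nuclear dimension zero by the composition behaviour of nuclear dimension for $^*$-homomorphisms, and therefore $\phi^* = \pi^*\circ\psi^*$ has nuclear dimension zero as well.

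The delicate point I would need to verify carefully is the permanence statement that composing a $^*$-homomorphism with the identity of a nuclear-dimension-zero algebra (or more precisely, that a $^*$-homomorphism factoring through such an algebra) yields a map of nuclear dimension zero; this is the analogue for the order-zero definition of Lemma \ref{lemma: CompHomsRR0}, and one checks it directly by precomposing the finite-dimensional approximation $F\xrightarrow{\eta} C(Z)$ of the identity with $\psi^*$ on the domain side and postcomposing with $\pi^*$ on the codomain side, noting that $\pi^*\circ\eta$ remains completely positive contractive and order zero since $\pi^*$ is a $^*$-homomorphism. With that permanence lemma in hand the three implications close up, so the overall structure is to route everything through the totally-disconnected-spectrum characterisation and keep careful track of which maps are honest $^*$-homomorphisms versus merely order-zero cpc maps.
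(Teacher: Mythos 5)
Your proposal is correct and takes essentially the same route as the paper: (i)$\implies$(ii) via Proposition \ref{prop: NucDimvsRR0}, (ii)$\implies$(iii) via Corollary \ref{rmk: commmorphismsRR0} together with the fact that a commutative real-rank-zero algebra has nuclear dimension zero (which you prove directly by clopen partitions where the paper cites \cite[Proposition 2.4]{nucdim}), and (iii)$\implies$(i) by the order-zero composition argument that the paper dismisses as immediate. The only blemish is organisational: the paragraph you label ``(iii)$\implies$(i)'' in fact proves (iii)$\implies$(ii), but the genuine (iii)$\implies$(i) is supplied by the permanence argument in your final paragraph, so the cycle of implications still closes.
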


\begin{proof}
The fact that (i) implies (ii) follows from Proposition \ref{prop: NucDimvsRR0}. If $\phi^*$ has real rank zero, then it follows by Corollary \ref{rmk: commmorphismsRR0} that $\phi^*$ factors through a commutative $\C$-algebra of real rank zero. This algebra has nuclear dimension equal to zero by \cite[Proposition 1.1]{rr0} and \cite[Proposition 2.4]{nucdim} (see also \cite[Theorem 2.6]{NucDimJorge}). The fact that (iii) implies (i) is immediate.
\end{proof}

We end this section by applying Theorem \ref{thm: commutativecase} to obtain a class of $^*$-homomorphisms with real rank zero.

\begin{theorem}\label{thm: RR0CrossedP}
Let $G$ be a compact connected group, $X,Y$ be compact Hausdorff $G$-spaces and $\phi:Y\to X$ be an equivariant continuous map. If $\phi^*:C(X)\to C(Y)$ has real rank zero, then the induced morphism $\tilde{\phi}:C(X)\rtimes_r G\to C(Y)\rtimes_r G$ factors through a $\C$-algebra of real rank zero, and in particular $\tilde{\phi}$ has real rank zero.
\end{theorem}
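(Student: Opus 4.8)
The plan is to reduce the statement to the commutative factorisation of Theorem \ref{thm: commutativecase}, to make that factorisation equivariant using the connectedness of $G$, and then to apply the crossed product functor.

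First, since $\phi^*$ has real rank zero, the argument in the proof of Theorem \ref{thm: commutativecase} shows that $\phi$ is constant on each connected component of $Y$. Writing $Y_c$ for the (compact, Hausdorff, totally disconnected) space of connected components of $Y$ and $\pi\colon Y\to Y_c$ for the quotient map, there is a continuous $\psi\colon Y_c\to X$ with $\phi=\psi\circ\pi$, dualising to the factorisation $\phi^*=\pi^*\circ\psi^*$ with $C(X)\xrightarrow{\psi^*}C(Y_c)\xrightarrow{\pi^*}C(Y)$, where $C(Y_c)$ has real rank zero by \cite[Proposition 1.1]{rr0}.

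The key step is to upgrade this to an \emph{equivariant} factorisation. For each $y\in Y$ the orbit $G\cdot y$ is the continuous image of the connected space $G$, hence connected, and it contains $y$; thus $G\cdot y$ is contained in the connected component of $y$. It follows that $\pi(g\cdot y)=\pi(y)$ for all $g\in G$, so $\pi$ is $G$-invariant and, equipping $Y_c$ with the \emph{trivial} $G$-action, $\pi$ becomes equivariant. Moreover $\phi(g\cdot y)=\phi(y)$ for the same reason, while equivariance of $\phi$ gives $\phi(g\cdot y)=g\cdot\phi(y)$; hence $\phi(y)\in X^G$ for every $y$, and since $\pi$ is surjective the image of $\psi$ lies in the fixed-point set $X^G$. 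Therefore $\psi$ is equivariant for the trivial action on $Y_c$, and so are $\pi^*$ and $\psi^*$. I expect this to be the main obstacle: everything rests on connectedness of $G$ forcing each orbit into a single connected component, which simultaneously trivialises the action on $Y_c$ and forces $\phi$ to land in $X^G$; without connectedness the factorisation need not be equivariant.

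Next, as $G$ is compact, hence amenable, reduced and full crossed products coincide and are functorial for equivariant $^*$-homomorphisms. Applying $-\rtimes_r G$ to $\phi^*=\pi^*\circ\psi^*$ yields
\[
C(X)\rtimes_r G\longrightarrow C(Y_c)\rtimes_r G\longrightarrow C(Y)\rtimes_r G,
\]
with composite $\tilde\phi$. Since $G$ acts trivially on $Y_c$ and $\C_r(G)=\C(G)$ is nuclear, the middle algebra is $C(Y_c)\rtimes_r G\cong C(Y_c)\otimes\C(G)$. By the Peter--Weyl theorem $\C(G)\cong\bigoplus_{\pi\in\widehat{G}}M_{d_\pi}$ is a $c_0$-direct sum of matrix algebras, so $C(Y_c)\otimes\C(G)\cong\bigoplus_{\pi}M_{d_\pi}(C(Y_c))$; each summand has real rank zero because $C(Y_c)$ does, and a $c_0$-direct sum of real rank zero algebras again has real rank zero. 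Hence $\tilde\phi$ factors through the real rank zero algebra $C(Y_c)\rtimes_r G$. Finally, by Lemma \ref{lemma: domainorcodRR0} the first leg $C(X)\rtimes_r G\to C(Y_c)\rtimes_r G$ has real rank zero (its codomain does), so Lemma \ref{lemma: CompHomsRR0} shows that $\tilde\phi$ has real rank zero, giving both conclusions. A secondary technical point to verify carefully is the identification $C(Y_c)\rtimes_r G\cong C(Y_c)\otimes\C(G)$ together with permanence of real rank zero under matrix amplification and $c_0$-direct sums.
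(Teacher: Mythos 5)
Your proof is correct and follows essentially the same route as the paper: the same equivariant factorisation through $Y_c$ (connectedness of $G$ forcing each orbit into a single connected component, hence a trivial action on $Y_c$ and equivariance of $\pi$ and $\psi$), followed by applying the crossed product functor and identifying $C(Y_c)\rtimes_r G\cong C(Y_c)\otimes\C_r(G)$. The only divergence is the final step: where the paper cites the fact that $\C_r(G)$ has real rank zero for compact connected $G$ together with a tensor-product result of Brown--Pedersen, you decompose $\C_r(G)$ via Peter--Weyl into a $c_0$-direct sum of matrix algebras and invoke permanence of real rank zero under matrix amplification and $c_0$-sums --- a slightly more self-contained argument which, at that stage, uses only compactness of $G$.
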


\begin{proof}
Let $Y_c$ be the set of connected components of $Y$ equipped with the quotient topology. Since $\phi^*$ has real rank zero, the proof of Theorem \ref{thm: commutativecase} shows that there exists a continuous map $\psi:Y_c\to X$ given by evaluating $\phi$ on each connected component.\ Then $\phi=\psi\circ\pi$, where $\pi:Y\to Y_c$ is the canonical quotient map.\ We equip the space $Y_c$ with the trivial action of $G$ and claim that the maps $\pi$ and $\psi$ are $G$-equivariant.

First note that since $G$ is connected, the orbit $Gy$ is connected for any $y\in Y$. Let $g\in G$ and $y\in Y$. Then, since $g\cdot y$ and $y$ are in the same connected component of $Y$, we have that $$\pi(g\cdot y)=\pi(y)=g\cdot \pi(y).$$ Hence, the map $\pi$ is $G$-equivariant.

Similarly, since $g\cdot y$ and $y$ are in the same connected component of $Y$, the proof of Theorem \ref{thm: commutativecase} shows that $\phi(g\cdot y)=\phi(y)$. Therefore, using the equivariance of $\phi$, one gets that $$\psi(g\cdot\pi(y))=\psi(\pi(y))=\phi(y)=\phi(g\cdot y)=g\cdot \phi(y)=g\cdot\psi(\pi(y)).$$ Hence, the $^*$-homomorphism $\phi^*$ factors equivariantly as \[
\begin{tikzcd}
    C(X) \arrow[rr, "\phi^*"] \arrow[swap, dr, "\psi^*"] & & C(Y) \\
    & C(Y_c) \arrow[swap, ur, "\pi^*"].
\end{tikzcd}
\]

Therefore, we obtain the commuting diagram 
\[
\begin{tikzcd}
    C(X)\rtimes_r G \arrow[rr, "\tilde{\phi}"] \arrow[swap, dr] & & C(Y)\rtimes_r G \\
    & C(Y_c)\rtimes_r G \arrow[swap, ur].
\end{tikzcd}
\] Since $G$ acts trivially on $Y_c$, we have that $C(Y_c)\rtimes_r G\cong C(Y_c)\otimes \C_r(G)$. Then, $G$ is compact connected, so $\C_r(G)$ has real rank zero by {\cite[Theorem 1]{GroupsRR0}}. Moreover, $C(Y_c)$ has real rank zero by \cite[Proposition 1.1]{rr0}. In particular, $C(Y_c)$ is an AF-algebra. Then, $C(Y_c)\otimes \C_r(G)$ has real rank zero by \cite[Theorem 3.2]{rr0} and we reach the conclusion. 
\end{proof}

\section{Permanence properties}\label{sect: permanenceprop}

We first show that real rank zero is preserved under inductive limits. 

\begin{prop}\label{prop: rr0inductivelim}
Suppose that $A=\lim\limits_{\longrightarrow}(A_n,\phi_n)$ and $B=\lim\limits_{\longrightarrow}(B_n,\psi_n)$ are two $\C$-inductive limits and there exist inclusions $A_n\subseteq B_n$ for all $n\in\mathbb{N}$, such that $\psi_n|_{A_n}=\phi_n$.\ If there exists $N\in\mathbb{N}$ such that $A_n\subseteq B_n$ has real rank zero for all $n>N$, then $A\subseteq B$ has real rank zero.
\end{prop}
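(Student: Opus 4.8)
The plan is to verify condition (ii) of Theorem~\ref{thm: eqdefnonunital} for the induced inclusion $A\subseteq B$; that is, to show that every self-adjoint element of $A$ can be approximated in norm by self-adjoint elements of $B$ with finite spectrum. Write $\phi_{n,\infty}\colon A_n\to A$ and $\psi_{n,\infty}\colon B_n\to B$ for the canonical maps into the inductive limits, and let $\iota_n\colon A_n\hookrightarrow B_n$ denote the given inclusions. The compatibility $\psi_n|_{A_n}=\phi_n$ says precisely that $(\iota_n)_n$ is a morphism of inductive systems, so it induces a $^*$-homomorphism $\iota\colon A\to B$ realising the inclusion and satisfying $\iota\circ\phi_{n,\infty}=\psi_{n,\infty}\circ\iota_n$ for every $n$.

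First I would fix a self-adjoint $a\in A$ and $\epsilon>0$. Since $\bigcup_n\phi_{n,\infty}(A_n)$ is dense in $A$ and these images are increasing, I can choose some $n>N$ and an element of $A_n$ whose image approximates $a$ to within $\epsilon/4$; replacing it by its self-adjoint part and using that $a=a^*$, I obtain a self-adjoint $a_n\in A_n$ with $\|\phi_{n,\infty}(a_n)-a\|\leq\epsilon/2$. Because $n>N$, the inclusion $A_n\subseteq B_n$ has real rank zero, so Theorem~\ref{thm: eqdefnonunital}(ii) provides a self-adjoint $b_n\in B_n$ with finite spectrum such that $\|a_n-b_n\|\leq\epsilon/2$, where the comparison takes place inside $B_n$ via $\iota_n$. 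I would then set $b=\psi_{n,\infty}(b_n)\in B$; since $\psi_{n,\infty}$ is a $^*$-homomorphism, $b$ is self-adjoint and its spectrum is contained in the finite set $\operatorname{sp}(b_n)\cup\{0\}$, so $b$ has finite spectrum.

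Finally, using $\iota\circ\phi_{n,\infty}=\psi_{n,\infty}\circ\iota_n$ together with the fact that $^*$-homomorphisms are contractive, I would estimate
\begin{align*}
\|\iota(a)-b\| &\leq \|\iota(a)-\iota(\phi_{n,\infty}(a_n))\|+\|\psi_{n,\infty}(\iota_n(a_n))-\psi_{n,\infty}(b_n)\|\\
&\leq \|a-\phi_{n,\infty}(a_n)\|+\|a_n-b_n\| \leq \epsilon,
\end{align*}
which yields condition (ii) and hence the result. I do not expect any serious obstacle here: the argument is essentially bookkeeping, and the only points requiring care are that self-adjointness and finite spectrum are preserved when passing through the connecting homomorphisms, and that the two approximation errors are chained correctly using the compatibility of the two inductive systems.
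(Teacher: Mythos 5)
Your proof is correct, and it follows the same overall skeleton as the paper's: approximate the self-adjoint element by one coming from a building block $A_n$ with $n>N$, invoke real rank zero of $A_n\subseteq B_n$ there, push the approximant into $B$ along $\psi_{n,\infty}$, and chain the two estimates via the compatibility $\iota\circ\phi_{n,\infty}=\psi_{n,\infty}\circ\iota_n$. The genuine difference is which equivalent characterisation you run this through. The paper uses condition (iii) of Theorem~\ref{thm: eqdefunital} (approximation by \emph{invertible} self-adjoints), and since invertibility only makes sense in, and is only preserved by, unital maps, it must first reduce to the unital case by replacing the whole system with $A_n^{\dagger}\subseteq B_n^{\dagger}$ and unital connecting maps. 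You instead use condition (ii) of Theorem~\ref{thm: eqdefnonunital} (finite spectrum), which is robust under arbitrary $^*$-homomorphisms: self-adjointness passes through, and the spectrum of $\psi_{n,\infty}(b_n)$ is contained in $\operatorname{sp}(b_n)\cup\{0\}$, hence stays finite. This lets you skip the unitisation step entirely and work with possibly non-unital algebras and non-unital connecting maps throughout, so your argument is marginally more streamlined; the paper's route, by contrast, only needs the easier fact that unital $^*$-homomorphisms preserve invertibility, at the cost of the preliminary reduction. Both are complete proofs, and your bookkeeping (including the replacement of the approximant by its self-adjoint part and the final triangle inequality) is sound.
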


\begin{proof}
Since $\psi_n|_{A_n}=\phi_n$ for any $n\in\mathbb{N}$, the sequence of inclusions $A_n\subseteq B_n$ induce an inclusion $A\subseteq B$.\ If the inclusion $A\subseteq B$ is non-unital, then consider the unital inclusion $A^{\dagger}\subseteq B^{\dagger}$ with unital connecting maps.\ If this inclusion has real rank zero, then by Theorem \ref{thm: eqdefnonunital}, the inclusion $A\subseteq B$ has real rank zero. Therefore, we can assume that $A\subseteq B$ is a unital inclusion, with $A_n,B_n$ unital for all $n\geq 1$ and unital connecting maps. 

Let $a\in A$ be a self-adjoint element and $\epsilon>0$. Then there exist $n>N$ and a self-adjoint element $a_n\in A_n$ such that $\|a-\phi_{n,\infty}(a_n)\|\leq \epsilon/2$, where $\phi_{n,\infty}:A_n\to A$ is an inductive limit connecting map.\ Since $\psi_k|_{A_k}=\phi_k$ for any $k\in\mathbb{N}$, we get that $\phi_{n,\infty}(a_n)=\psi_{n,\infty}(a_n)$, so $$\|a-\psi_{n,\infty}(a_n)\|\leq\epsilon/2.$$ By assumption, $A_n\subseteq B_n$ has real rank zero, so there exists an invertible self-adjoint $b_n\in B_n$ such that $\|a_n-b_n\|\leq\epsilon/2$. Since $\psi_{n,\infty}$ is a unital $^*$-homomorphism, $\psi_{n,\infty}(b_n)$ is a self-adjoint invertible element.\ Then, triangle inequality gives that $\|a-\psi_{n,\infty}(b_n)\|\leq\epsilon$.\ Hence $A\subseteq B$ has real rank zero by Theorem \ref{thm: eqdefunital}.
\end{proof}

Then, we can show that real rank zero of an inclusion is preserved if we consider an intermediate inclusion into a hereditary subalgebra. 

\begin{prop}\label{prop: rrohereditary}
Let $A\subseteq B$ be an inclusion of $\C$-algebras and $D$ be a hereditary subalgebra of $B$ such that $A\subseteq D$. If $A\subseteq B$ has real rank zero, then the inclusion $A\subseteq D$ has real rank zero.
\end{prop}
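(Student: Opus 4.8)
The plan is to reduce the whole statement to a single algebraic identification: for every nonzero positive $a \in A$, the hereditary $\C$-subalgebra $\overline{aBa}$ of $B$ and the hereditary $\C$-subalgebra $\overline{aDa}$ of $D$ coincide. Granting this, the conclusion is immediate from Definition \ref{defn: RR0inclusions}: since $A \subseteq B$ has real rank zero, $\overline{aBa}$ has an approximate unit of projections, and as $\overline{aDa} = \overline{aBa}$ the very same approximate unit witnesses that $\overline{aDa}$ has an approximate unit of projections. As $a$ ranges over the nonzero positive elements of $A$, this is exactly the assertion that $A \subseteq D$ has real rank zero.

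It therefore remains to prove $\overline{aBa} = \overline{aDa}$ for $a \in A_+$. First note that $a \in A \subseteq D$, so $\overline{aDa}$ is meaningful, and the inclusion $\overline{aDa} \subseteq \overline{aBa}$ is automatic from $D \subseteq B$. For the reverse inclusion, since $\overline{aDa}$ is closed it suffices to show $aBa \subseteq \overline{aDa}$. Fix $b \in B$ and let $(e_\lambda)_\lambda$ be an approximate unit for $D$. Because $D$ is hereditary in $B$ we have $DBD \subseteq D$, so in particular $e_\lambda b e_\lambda \in D$ for every $\lambda$; consequently $a(e_\lambda b e_\lambda)a \in aDa$.

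The remaining step is a routine approximate-unit estimate. From the identity
\[
aba - a e_\lambda b e_\lambda a = ab(a - e_\lambda a) + (a - a e_\lambda) b e_\lambda a,
\]
together with the facts that $a \in D$ forces $e_\lambda a \to a$ and $a e_\lambda \to a$ while $\|ab\|$ and $\|b e_\lambda a\| \le \|b\|\,\|a\|$ remain bounded, we obtain $a(e_\lambda b e_\lambda)a \to aba$. Hence $aba \in \overline{aDa}$, and since $b \in B$ was arbitrary this gives $\overline{aBa} \subseteq \overline{aDa}$, completing the identification $\overline{aBa} = \overline{aDa}$. I do not expect a serious obstacle here; the only points requiring care are the use of hereditariness in the form $DBD \subseteq D$, which keeps $e_\lambda b e_\lambda$ inside $D$, and the observation that an approximate unit of $D$ acts as a two-sided local unit on the element $a \in D$.
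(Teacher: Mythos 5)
Your proof is correct and takes exactly the same route as the paper, whose entire argument is the observation that $\overline{aDa}=\overline{aBa}$ for each nonzero $a\in A_+$, so that the approximate unit of projections in $\overline{aBa}$ serves directly for the inclusion $A\subseteq D$. The only difference is that the paper asserts this identification without proof, whereas you supply the (correct) approximate-unit verification using $DBD\subseteq D$.
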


\begin{proof}
Let $a\in A_+$ be nonzero.\ Then $\overline{aBa}$ has an approximate unit of projections.\ Since $\overline{aDa}=\overline{aBa}$, the inclusion $A\subseteq D$ has real rank zero.
\end{proof}

\begin{cor}\label{cor: rr0hereditary}
Let $A\subseteq B$ be an inclusion of $\C$-algebras and $a\in A$ be a positive element.\ If $A\subseteq B$ has real rank zero, then the induced inclusion $\overline{aAa}\subseteq\overline{aBa}$ has real rank zero.
\end{cor}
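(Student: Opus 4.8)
The plan is to deduce Corollary \ref{cor: rr0hereditary} from Proposition \ref{prop: rrohereditary} by exhibiting $\overline{aBa}$ as a hereditary subalgebra of $B$ containing the subalgebra $\overline{aAa}$. First I would observe that $\overline{aAa}\subseteq\overline{aBa}$ is indeed an honest inclusion of $\C$-algebras: since $a\in A\subseteq B$, any element of the form $axa$ with $x\in A$ also lies in $aBa$, so $\overline{aAa}\subseteq\overline{aBa}$. Moreover $\overline{aBa}$ is by definition a hereditary $\C$-subalgebra of $B$. The idea is therefore to apply Proposition \ref{prop: rrohereditary} with the larger algebra $B$, the subalgebra $A$ replaced by $\overline{aAa}$, and the hereditary subalgebra $D$ replaced by $\overline{aBa}$.

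The one point that requires care — and which I expect to be the only real obstacle — is verifying the hypothesis of Proposition \ref{prop: rrohereditary}, namely that the inclusion $\overline{aAa}\subseteq\overline{aBa}$ already has real rank zero. This does \emph{not} follow formally from $A\subseteq B$ having real rank zero applied to the full $B$; rather, I would argue directly from Definition \ref{defn: RR0inclusions}. Take a nonzero positive element $c\in\overline{aAa}$. Then $c$ is in particular a positive element of $A$, so since $A\subseteq B$ has real rank zero, the hereditary subalgebra $\overline{cBc}$ of $B$ has an approximate unit consisting of projections. The key observation is that the hereditary subalgebra of $\overline{aBa}$ generated by $c$ coincides with $\overline{cBc}$: because $c\in\overline{aBa}$, we have $c\,(\overline{aBa})\,c=\overline{cBc}$, using that $\overline{aBa}$ is hereditary in $B$ and hence closed under the relevant products. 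Thus the hereditary subalgebra $\overline{c\,\overline{aBa}\,c}$ of $\overline{aBa}$ equals $\overline{cBc}$, which has an approximate unit of projections.

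Having established this, the argument closes cleanly. I would phrase the conclusion as: for every nonzero positive $c\in\overline{aAa}$, the hereditary subalgebra $\overline{c(\overline{aBa})c}=\overline{cBc}$ has an approximate unit of projections, which by Definition \ref{defn: RR0inclusions} is exactly the statement that the inclusion $\overline{aAa}\subseteq\overline{aBa}$ has real rank zero. Alternatively, since this already gives real rank zero of the desired inclusion directly, Proposition \ref{prop: rrohereditary} is not strictly needed; but invoking it gives a tidier exposition by reusing the equality $\overline{aDa}=\overline{aBa}$ for hereditary $D$. The main subtlety throughout is the manipulation of nested hereditary subalgebras, specifically the identity $\overline{cBc}=\overline{c(\overline{aBa})c}$ for $c\in\overline{aBa}$, which follows from the defining property of a hereditary subalgebra together with the fact that $\overline{xBx^{*}}=\overline{xx^{*}Bxx^{*}}$ noted in the remark after Definition \ref{defn: RR0inclusions}.
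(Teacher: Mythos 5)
Your proposal is correct and takes essentially the same route as the paper: the paper first notes that $\overline{aAa}\subseteq B$ has real rank zero (because every nonzero positive element of $\overline{aAa}$ is a nonzero positive element of $A$) and then cites Proposition \ref{prop: rrohereditary}, whose one-line proof is exactly the nested-hereditary-subalgebra identity $\overline{c\,(\overline{aBa})\,c}=\overline{cBc}$ that you verify by hand. One terminological slip: what you call the ``hypothesis'' of Proposition \ref{prop: rrohereditary} (that $\overline{aAa}\subseteq\overline{aBa}$ has real rank zero) is actually its conclusion --- the hypothesis needed is that $\overline{aAa}\subseteq B$ has real rank zero, which is precisely what your first two steps provide --- but since your direct verification of Definition \ref{defn: RR0inclusions} is sound, this does not affect the argument.
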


\begin{proof}
Since $A\subseteq B$ has real rank zero, the inclusion $\overline{aAa}\subseteq B$ also has real rank zero.\ Hence, $\overline{aAa}\subseteq\overline{aBa}$ has real rank zero by Proposition \ref{prop: rrohereditary}.
\end{proof}

We will now examine the behaviour of inclusions of real rank zero under extensions. The next three propositions follow the proof of {\cite[Theorem 3.14]{rr0}}.

\begin{prop}\label{prop: rr0extensions1}
Let $A,B$ be $\C$-algebras and $I$ and $J$ be closed $2$-sided ideals of $A$ and $B$ respectively.\ Suppose that $A\subseteq B$, $I\subseteq J$, and the inclusion $A\subseteq B$ has real rank zero.\ Denote the induced map between the quotients by $\theta:A/I\to B/J$.\ Then $I\subseteq J$ and $\theta(A/I)\subseteq B/J$ have real rank zero.\ Moreover, for any nonzero positive element $x\in A$, any projection in $\overline{xAx}/\overline{xIx}$ lifts to a projection in $\overline{xBx}$.
\end{prop}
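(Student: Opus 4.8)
The plan is to prove the three assertions of Proposition \ref{prop: rr0extensions1} in turn, drawing on the self-adjoint finite-spectrum characterisation of Theorem \ref{thm: eqdefnonunital}(ii). For the inclusion $I \subseteq J$, let $a \in I$ be self-adjoint and $\epsilon > 0$; since $a \in A$ and $A \subseteq B$ has real rank zero, Theorem \ref{thm: eqdefnonunital} produces a self-adjoint $b \in B$ with finite spectrum and $\|a - b\| \leq \epsilon$. The issue is that $b$ need not lie in $J$. I would remedy this by using functional calculus: because $a \in I$ and $I$ is an ideal, I can first approximate $a$ by an element of the form $g(a) a$ with $g \in C_0(\mathbb{R} \setminus \{0\})$ supported away from $0$, which still lies in $I$, and then cut the finite-spectrum approximant $b$ down by a spectral projection of $b$ that is close to the support of $a$. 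Concretely, applying a continuous function $h$ which is $1$ on the spectrum of the ``large'' part of $a$ and $0$ near $0$ to $b$ yields a projection $p \in B$ with finite spectrum, and $pbp \in \overline{aBa} \subseteq \overline{aJa}$ lands in $J$ since $\overline{aBa}$ is hereditary and $a \in I \subseteq J$; this $pbp$ (adjusted) is the required finite-spectrum self-adjoint approximant inside $J$. The cleanest route here is to invoke Definition \ref{defn: RR0inclusions} directly: for $a \in I_+$, the hereditary subalgebra $\overline{aBa}$ has an approximate unit of projections, and since $a \in J$ we have $\overline{aBa} = \overline{aJa} \subseteq J$, so the projections lie in $J$, giving real rank zero of $I \subseteq J$ immediately.

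For the quotient inclusion $\theta(A/I) \subseteq B/J$, I would again use Theorem \ref{thm: eqdefnonunital}(ii). Take a self-adjoint element of $A/I$; lift it to a self-adjoint $a \in A$. Apply real rank zero of $A \subseteq B$ to obtain a self-adjoint $b \in B$ with finite spectrum and $\|a - b\| \leq \epsilon$. Then the image $b + J \in B/J$ is self-adjoint, has finite spectrum (quotient maps send finite-spectrum self-adjoints to finite-spectrum self-adjoints, as the spectrum of the image is contained in the spectrum of $b$), and approximates $\theta(a + I)$ to within $\epsilon$ since the quotient map is contractive and intertwines the inclusions. Thus $\theta(A/I) \subseteq B/J$ has real rank zero by Theorem \ref{thm: eqdefnonunital}.

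The main obstacle is the final lifting statement: given a nonzero positive $x \in A$ and a projection $\bar q \in \overline{xAx}/\overline{xIx}$, produce a projection in $\overline{xBx}$ lifting it. Here $\overline{xIx} = \overline{xAx} \cap I$ plays the role of the kernel. I would first lift $\bar q$ to a self-adjoint $c \in \overline{xAx}$, so that $c^2 - c \in \overline{xIx}$, i.e. the image of $c$ is a projection modulo $\overline{xIx}$; this uses that self-adjoint elements lift through quotients. The key point is that $\overline{xBx}$ has real rank zero as a $\C$-algebra in its own right, which follows because $A \subseteq B$ has real rank zero and so, by Definition \ref{defn: RR0inclusions}, $\overline{xBx}$ has an approximate unit of projections — this forces $\overline{xBx}$ to have real rank zero by Theorem \ref{thm: RR0Alg}(iii). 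Viewing $c \in \overline{xAx} \subseteq \overline{xBx}$ as a self-adjoint element whose spectrum clusters near $\{0,1\}$ modulo the ideal $\overline{xIx} \subseteq \overline{xBx}$, I would apply the standard projection-lifting theorem for real rank zero $\C$-algebras: in a $\C$-algebra of real rank zero, projections lift along any quotient (equivalently, the relevant $K_0$-surjectivity and the Brown--Pedersen machinery apply). The care needed is to verify that the ideal of $\overline{xBx}$ we quotient by is exactly $\overline{xIx} = \overline{xBx} \cap (\text{the closed ideal generated by } I)$ so that the quotient $\overline{xBx}/\overline{xIx}$ matches the ambient picture, and then to use functional calculus on $c$ (applying a function equal to $0$ near $0$ and $1$ near $1$) to produce the genuine projection $p \in \overline{xBx}$ with $p - c \in \overline{xIx}$, whence $p$ lifts $\bar q$. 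I expect the bookkeeping identifying the hereditary subalgebras of the quotient with quotients of hereditary subalgebras to be the most delicate part, but it follows from the standard fact that $\overline{xAx}/(\overline{xAx}\cap I) \cong \overline{\bar x (A/I) \bar x}$ under the quotient map.
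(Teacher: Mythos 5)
Your treatment of the first two assertions is correct and coincides with the paper's own argument: for $I\subseteq J$ you use that $\overline{aBa}=\overline{aJa}$ for $a\in I_+$, so the approximate unit of projections supplied by real rank zero of $A\subseteq B$ already lies in $J$ (your preliminary functional-calculus manoeuvre is unnecessary, but the ``cleanest route'' you settle on is exactly the paper's proof), and for $\theta(A/I)\subseteq B/J$ you push a finite-spectrum self-adjoint approximant $b\in B$ through the quotient map, again as in the paper.

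The lifting assertion, however, contains a genuine gap, and it is precisely the pitfall that the relative notion is built around. You claim $\overline{xBx}$ has real rank zero as a $\C$-algebra because it has an approximate unit of projections, citing Theorem \ref{thm: RR0Alg}(iii). That implication is false: condition (iii) of Theorem \ref{thm: RR0Alg} requires \emph{every} hereditary $\C$-subalgebra of $\overline{xBx}$ to have an approximate unit of projections, whereas Definition \ref{defn: RR0inclusions} only provides this for hereditary subalgebras generated by positive elements of $A$. The paper itself supplies counterexamples: for $G=\mathbb{F}_2\times\mathbb{Z}^2$ the inclusion $\C_r(G)\subseteq\ell^{\infty}(G)\rtimes_r G$ has real rank zero (Theorem \ref{roealgebraex}), yet taking $x=1$ gives $\overline{xBx}=\ell^{\infty}(G)\rtimes_r G$, which does not have real rank zero (see the remark following Theorem \ref{roealgebraex}). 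Consequently the ``standard projection-lifting theorem for real rank zero $\C$-algebras'' you wish to invoke has no algebra to apply to. (Two smaller points: the relevant ideal of $\overline{xBx}$ is $\overline{xJx}=\overline{xBx}\cap J$, not $\overline{xBx}$ intersected with the ideal of $B$ generated by $I$ --- the latter can be all of $\overline{xBx}$, e.g.\ when $B$ is simple and $I\neq 0$; and demanding $p-c\in\overline{xIx}$ is too strong, since it would force $p\in\overline{xAx}$, while the lift is only asserted modulo $J$.) The paper's proof uses nothing beyond the relative property: after reducing to $\overline{xAx}\subseteq\overline{xBx}$ via Corollary \ref{cor: rr0hereditary}, it takes a self-adjoint $a$ of norm one with $a+J$ a nonzero projection, approximates $a$ within $\epsilon^2$ by a finite-spectrum element $b=\sum_k\lambda_kp_k\in B$, observes that $p_k\in J$ whenever $\lambda_k$ is not within $2\epsilon$ of $0$ or $1$, sets $p=\sum_{k\in S}p_k$ over the indices with $\lambda_k$ near $1$, and then perturbs $a$ by an element of $J$ to a self-adjoint $z$ with $z-a\in J$ and $\|p-z\|<\tfrac{1}{16}$, so that the spectral gap of $z$ at $\tfrac12$ produces a projection $f(z)\in\overline{xBx}$ lifting $a+J$. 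Some argument of this relative kind is unavoidable; your route cannot be repaired merely by a more careful identification of the ideals involved.
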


\begin{proof}
Let $a\in I_+$ and $\epsilon>0$. Since $\overline{aBa}$ has an approximate unit of projections, there exists a projection $p\in \overline{aBa}$ such that $\|pa-a\|\leq \epsilon.$ However, $a\in J$, so $p\in \overline{aJa}$. Thus, $I\subseteq J$ has real rank zero.

Let us now consider the inclusion $\theta(A/I)\subseteq B/J$.\ Let $a\in A$ be a self-adjoint element and $\epsilon>0$.\ Then, by Theorem \ref{thm: eqdefnonunital}, there exists a self-adjoint element with finite spectrum $b\in B$ such that $\|a-b\|\leq \epsilon$. But $b+J$ is still a self-adjoint element with finite spectrum and $$\|\theta(a+I)-(b+J)\|=\|(a-b)+J\|\leq \epsilon.$$ Hence, $\theta(A/I)\subseteq B/J$ has real rank zero.

Let $x\in A_+$ be nonzero.\ By Corollary \ref{cor: rr0hereditary}, the inclusion $\overline{xAx}\subseteq\overline{xBx}$ has real rank zero.\ Therefore, replacing the inclusion $A\subseteq B$ with $\overline{xAx}\subseteq\overline{xBx}$, it suffices to check that any nonzero projection in $\theta(A/I)$ lifts to a projection in $B$.\ Let $a\in A$ be a self-adjoint element of norm $1$ such that $a+J$ is a nonzero projection and let $0<\epsilon<1$.\ Then there exists $b\in B$ self-adjoint with finite spectrum and norm $1$ such that $\|a-b\|\leq\epsilon^2$. Since $b$ is a finite linear combination of its spectral projections, we can write $b=\sum\limits_{k=1}^n\lambda_kp_k$, for some $n\in\mathbb{N}$, $\lambda_k\in\mathbb{R}$, and $p_k\in B$ pairwise orthogonal projections. 

Let us denote by $\pi:B\to B/J$ the canonical quotient map.\ Using that $\pi(a)$ is a projection, a direct estimation yields that 
\begin{equation}
    \left\|\pi\left(\sum\limits_{k=1}^n(\lambda_k-\lambda_k^2)p_k\right)\right\|=\|\pi(b-b^2-a+a^2)\|\leq 3\epsilon^2.
\end{equation} Therefore, $p_k\in J$ whenever $|\lambda_k-\lambda_k^2|\geq 4\epsilon^2$. 

Let $S$ and $T$ be the subsets of $\{1,2,\ldots,n\}$ consisting of those $k$ such that $|1-\lambda_k|<2\epsilon$ and $|\lambda_k|<2\epsilon$, respectively. Setting $p=\sum\limits_{k\in S}p_k$ and observing that $p_k\in J$ if $k$ is not contained in the union of $S$ and $T$, it follows that $$\|\pi(p-b)\|\leq \Bigg\|\sum\limits_{k\in S}p_k-\sum\limits_{k\in S\cup T}\lambda_kp_k\Bigg\|\leq 2 \epsilon.$$ Then, $\|\pi(p-a)\|\leq 2\epsilon+\epsilon^2$, so there is a positive contraction $e\in J$ such that $\| (1-e)(p-a) \| \leq 3\epsilon + \epsilon^2$. Hence, $z= (1-e)a(1-e) + ep+pe-epe \in B$ is self-adjoint such that $z-a \in J$
 and $\| p-z \| \leq 3 \epsilon + \epsilon^2$.
So for $\epsilon$ small enough we have $\|p-z\|<\frac{1}{16}$. Since $\|z+p\|\leq 2+\frac{1}{16}$, it follows that $$\|z-z^2\|=\|(z-p)-(z^2-p^2)\|< \frac{1}{4},$$ so the spectrum of $z$ has a gap around $\frac{1}{2}$.\ Let 
 \[  f(t)= 
\begin{cases} 
      0 & t<\frac{1}{2} \\
      1 & t>\frac{1}{2}
   \end{cases}
\] be a continuous function on the spectrum of $z$.\ Then $f(z)$ is a projection and it is a lift of the projection $a+J$.\ Hence, for any positive nonzero $x\in A$, any projection in $\overline{xAx}/\overline{xIx}$ lifts to a projection in $\overline{xBx}$.
\end{proof}

\begin{remark}
Let us reflect on the converse of the above proposition. Even if the inclusions $I\subseteq J$ and $\theta(A/I)\subseteq B/J$ have real rank zero, and any projection in $\theta(A/I)$ lifts to a projection in $B$, we cannot conclude that $A\subseteq B$ has real rank zero.\ For example, if $I=\{0\}$ and $J=B$, then $I\subseteq J$ and $\theta(A/I)\subseteq B/J$ have real rank zero.\ Moreover, any projection in $\theta(A/I)$ lifts to a projection in $B$.\ However, the fact that $A\subseteq B$ has real rank zero does not hold in general.
\end{remark}

Let us now provide partial converses to Proposition \ref{prop: rr0extensions1}.

\begin{prop}\label{prop: rroextensions2}
Let $A,B$ be $\C$-algebras and $I$ and $J$ be closed $2$-sided ideals of $A$ and $B$ respectively such that $A\subseteq B$ and $I\subseteq J$. Suppose that $A/I$ has real rank zero and the inclusion $I\subseteq J$ has real rank zero.\ Moreover, assume that for any hereditary subalgebra $D$ of $A$, any projection in $(D+I)/I$ lifts to a projection in $D$.\ Then $A\subseteq B$ has real rank zero.
\end{prop}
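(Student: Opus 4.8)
The plan is to verify condition (iv) of Theorem~\ref{thm: eqdefnonunital} for the inclusion $A\subseteq B$. So I fix positive $x,y\in A^{\dagger}$ with $xy=0$ and $\epsilon>0$, and aim to produce a projection $p\in B^{\dagger}$ with $\|px-x\|\le\epsilon$ and $\|(1-p)y-y\|\le\epsilon$. Applying the character $A^{\dagger}\to\mathbb{C}$ to $xy=0$ shows that the scalar parts of $x$ and $y$ have product zero, so at least one of $x,y$ lies in $A$; I treat the case $x\in A$, the case $y\in A$ being symmetric (one interchanges the roles of $x$ and $y$ and replaces $p$ by $1-p$). Write $\pi\colon A\to A/I$ for the quotient map, set $\bar z=\pi(z)$, and fix a small tolerance $\eta>0$ to be specified at the end.

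The main difficulty is that the two ingredients we control---approximation in the quotient $A/I$ and approximation in the ideal inclusion $I\subseteq J$---do not obviously combine into a single projection. I resolve this by forcing the separating projection to be orthogonal to $y$ from the outset. Since $A/I$ has real rank zero, the hereditary subalgebra $\overline{\bar x\,(A/I)\,\bar x}$ has an approximate unit of projections by Theorem~\ref{thm: RR0Alg}, so there is a projection $\bar p$ in it with $\|\bar p\bar x-\bar x\|<\eta$. As $x\in A$, this hereditary subalgebra is exactly $(\overline{xAx}+I)/I$, so the lifting hypothesis applied to the hereditary subalgebra $D=\overline{xAx}$ of $A$ yields a projection $p_0\in\overline{xAx}$ lifting $\bar p$. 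The point of lifting inside $\overline{xAx}$ is that $\overline{xAx}\cdot y=0$ (because $xy=0$), whence $p_0y=0$ \emph{exactly}.

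It remains to correct the defect of $p_0$ on $x$, which now sits in the ideal. I set $x_1=(1-p_0)x(1-p_0)\ge 0$; since $\|\pi(x_1)\|\le\|(1-\bar p)\bar x\|<\eta$, the element $\hat x=(x_1-\eta)_+$ lies in $I_{+}$ and satisfies $\|x_1-\hat x\|\le\eta$. Moreover $\hat x\le x_1$ lies in the corner $(1-p_0)A^{\dagger}(1-p_0)$, and using $(1-p_0)y=y$ together with $xy=0$ one checks $\hat xy=0$. Now I invoke that $I\subseteq J$ has real rank zero: the hereditary subalgebra $\overline{\hat x J\hat x}$ has an approximate unit of projections, giving a projection $q\in\overline{\hat x J\hat x}$ with $\|q\hat x-\hat x\|<\eta$. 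Because $\hat x=(1-p_0)\hat x(1-p_0)$, this corner lies in $(1-p_0)B^{\dagger}(1-p_0)$, so $q\in J$ and $q\le 1-p_0$; and since $q$ belongs to the hereditary subalgebra generated by $\hat x$, the orthogonality $\hat xy=0$ forces $qy=0$.

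Finally I put $p=p_0+q$, which is a projection in $B^{\dagger}$ because $q\le 1-p_0$. From $p_0y=qy=0$ I get $(1-p)y=y$, so the second estimate holds exactly. For the first, $1-p=(1-p_0)-q$ is a subprojection of $1-p_0$, so $(1-p)x(1-p)=(1-p)x_1(1-p)$, and combining $\|q\hat x-\hat x\|<\eta$ with $\|x_1-\hat x\|\le\eta$ gives $\|(1-p)x_1\|<2\eta$; since $x\ge 0$, this yields $\|px-x\|^2=\|(1-p)x\|^2\le\|x\|\,\|(1-p)x(1-p)\|<2\eta\|x\|$. Choosing $\eta$ so that $2\eta\|x\|<\epsilon^2$ finishes the verification of (iv), and hence shows that $A\subseteq B$ has real rank zero. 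As indicated, the only genuinely delicate step is the fusion of the quotient and ideal data, which is handled entirely by the requirement that $\bar p$---and therefore $p_0$ and $q$---be orthogonal to $y$, confining the correction $q$ to the corner $(1-p_0)B^{\dagger}(1-p_0)$.
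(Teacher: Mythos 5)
Your proof is correct and follows essentially the same route as the paper's: both verify condition (iv) of Theorem \ref{thm: eqdefnonunital} by using real rank zero of $A/I$ together with the lifting hypothesis (applied to $D=\overline{xAx}$) to produce a lifted projection $p_0\in\overline{xAx}$, then real rank zero of $I\subseteq J$ to produce an orthogonal correction projection in the corner $(1-p_0)B(1-p_0)$, and finally take $p=p_0+q$. The only differences are cosmetic: where the paper invokes the induced inclusion $(1-p)\overline{xIx}(1-p)\subseteq(1-p)\overline{xJx}(1-p)$, you cut down explicitly to $\hat x=(x_1-\eta)_+\in I_+$ before applying the definition, and you track the final estimate $\|(1-p)x\|^2\le\|x\|\,\|(1-p)x(1-p)\|$ a bit more carefully than the paper does.
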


\begin{proof}
Let $x,y\in A^{\dagger}$ be positive orthogonal elements and $\epsilon>0$. Since $xy=0$, at least one of them is in $A$, so let us assume that $x\in A$. Let $C=\overline{xAx}$ and $\pi:A\to A/I$ be the canonical quotient map.\ Then $\pi(C)$ is the hereditary subalgebra of $A/I$ generated by $\pi(x)$, so it has real rank zero. Therefore, there exists a projection $q\in \pi(C)$ such that 
\begin{equation}\label{eq: PermProp1}
\|q\pi(x)-\pi(x)\|\leq\epsilon.
\end{equation}By assumption, we can lift $q$ to a projection $p\in C$.

Consider $(1-p)\overline{xIx}(1-p)$ and $(1-p)\overline{xJx}(1-p)$.\ Then, since the inclusion $I\subseteq J$ has real rank zero, a similar proof to that of Proposition \ref{prop: rrohereditary} shows that the inclusion $(1-p)\overline{xIx}(1-p)\subseteq (1-p)\overline{xJx}(1-p)$ has real rank zero.\ Moreover, \eqref{eq: PermProp1} gives $\|\pi(x-px)\|\leq\epsilon$, so we can find a projection $r\in(1-p)\overline{xJx}(1-p)$ such that
\begin{equation}\label{eq: PermProp2}
\|(x-px)-r(x-px)\|\leq2\epsilon.
\end{equation} Note that $p$ and $r$ are orthogonal, so let $p_1=p+r$, which is a projection in $\overline{xB^{\dagger}x}$. Since $xy=0$, it follows that $p_1y=0$. Then, \eqref{eq: PermProp2} yields that $\|x-p_1x\|\leq2\epsilon$, so $A\subseteq B$ has real rank zero by Theorem \ref{thm: eqdefnonunital}(iv).
\end{proof}

\begin{prop}\label{prop: rr0extension3}
Let $A,B$ be $\C$-algebras and $I$ and $J$ be closed $2$-sided ideals of $A$ and $B$ respectively such that $A\subseteq B$ and $I\subseteq J$. Denote the induced map between the quotients by $\theta:A/I\to B/J$.\ Suppose that $J$ has real rank zero, any projection in $B/J$ lifts to a projection in $B$, and $\theta(A/I)\subseteq B/J$ has real rank zero.\ Then $A\subseteq B$ has real rank zero.  
\end{prop}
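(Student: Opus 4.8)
The plan is to prove (i) — that $A\subseteq B$ has real rank zero — by verifying condition (iv) of Theorem \ref{thm: eqdefnonunital}, working with positive orthogonal elements $x,y\in A^{\dagger}$ and an arbitrary $\epsilon>0$. Following the pattern established in Proposition \ref{prop: rroextensions2}, I would reduce to the case $x\in A$ and pass to the quotient, where real rank zero of $\theta(A/I)\subseteq B/J$ gives a projection approximately covering $\pi(x)$; the hypothesis that projections in $B/J$ lift should then let me pull this back to $B$, after which the hypothesis that $J$ has real rank zero handles the correction term living in the ideal.

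\begin{proof}
We verify condition (iv) of Theorem \ref{thm: eqdefnonunital}. Let $x,y\in A^{\dagger}$ be positive orthogonal elements and $\epsilon>0$. Since $xy=0$, at least one lies in $A$; assume $x\in A$. Let $\pi_A:A\to A/I$ and $\pi_B:B\to B/J$ denote the quotient maps, and recall that $\theta\circ\pi_A=\pi_B|_A$. Applying real rank zero of the inclusion $\theta(A/I)\subseteq B/J$ to the orthogonal pair $\theta(\pi_A(x)),\theta(\pi_A(y))$ in $(A/I)^{\dagger}\subseteq (B/J)^{\dagger}$, we obtain a projection $\bar q\in (B/J)^{\dagger}$ with
\begin{equation}\label{eq: rr0ext3a}
\|\bar q\,\pi_B(x)-\pi_B(x)\|\leq\epsilon \quad\text{and}\quad \bar q\,\pi_B(y)=0.
\end{equation}
By hypothesis, $\bar q$ lifts to a projection $p\in B^{\dagger}$, so that $\pi_B(p)=\bar q$. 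From the first estimate in \eqref{eq: rr0ext3a} we get $\|px-x\|\leq\epsilon$ modulo $J$; that is, $px-x=:j_0\in J$ with $\|\pi_B(px-x)\|\leq\epsilon$, and the element $z:=px-x$ is self-adjoint up to controlled error and represents a small element of $J$.

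The remaining task is to correct $p$ by a projection inside $J$ so as to absorb the error term and to kill $y$ exactly. Here the hypothesis that $J$ has real rank zero enters: working in the hereditary subalgebra of $J$ generated by a suitable positive element built from $(1-p)x(1-p)$ (which is close to an element of $J$ after the quotient estimate), I would produce a projection $r\in J$ that approximately covers the part of $x$ not yet covered by $p$, while remaining orthogonal to both $p$ and to $y$. Since $r\in J$ can be arranged inside $\overline{x B^{\dagger} x}$ and $xy=0$ forces $ry=0$, setting $p_1=p+r-pr-rp$ (or arranging $p,r$ genuinely orthogonal) yields a projection in $B^{\dagger}$ with $p_1 y=0$ and $\|p_1 x-x\|\leq C\epsilon$ for an absolute constant $C$. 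Rescaling $\epsilon$ then gives condition (iv), and hence $A\subseteq B$ has real rank zero.

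\textbf{The main obstacle} I anticipate is the bookkeeping of orthogonality and self-adjointness when combining the lifted projection $p$ with the ideal projection $r$: the lift $p$ need not commute with $x$, so the error $px-x$ is not automatically self-adjoint nor manifestly a positive element whose hereditary subalgebra sits inside $J\cap\overline{xB^{\dagger}x}$. Handling this likely requires a functional-calculus cutoff — replacing $p$ by a spectral projection of a nearby self-adjoint, as in the lifting argument at the end of Proposition \ref{prop: rr0extensions1} — so that the residual piece is a genuine positive element of $J$ before invoking real rank zero of $J$. Controlling the accumulated constants through these approximations, rather than any conceptual difficulty, is the crux of the proof.
\end{proof}
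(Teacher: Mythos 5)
Your skeleton matches the paper's proof (verify condition (iv) of Theorem \ref{thm: eqdefnonunital}, reduce to $x\in A$, find a projection in the quotient, lift it, then correct by a projection in $J$), but the execution has a genuine gap at the decisive step. First, a smaller point: applying condition (iv) in the quotient only yields $\|(1-\bar q)\pi_B(y)-\pi_B(y)\|\leq\epsilon$, not $\bar q\,\pi_B(y)=0$ as you assert in your display; exact annihilation of $\pi_B(y)$ comes for free only if the projection is taken inside the hereditary subalgebra $\overline{\pi_B(x)(B/J)\pi_B(x)}$, i.e.\ if one uses Definition \ref{defn: RR0inclusions} (via Corollary \ref{cor: rr0hereditary}) rather than condition (iv) in the quotient. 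Second, and more seriously: after lifting, you have no control over $py$ at all. The hypothesis only provides \emph{some} projection $p\in B$ with $\pi_B(p)=\bar q$; nothing forces $p$ to lie anywhere near $\overline{xB^{\dagger}x}$, so $py$ can be large, and your proposed combination $p_1=p+r-pr-rp$ is not a projection unless $p$ and $r$ commute --- ``arranging $p,r$ genuinely orthogonal'' is precisely the unresolved issue. Your closing paragraph correctly identifies this obstacle, but the suggested repair (a functional-calculus cutoff as in Proposition \ref{prop: rr0extensions1}) is not carried out, and it is not clear it can be made to work while preserving both the lifting property modulo $J$ and orthogonality to $y$.

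The paper resolves exactly this point with Brown--Pedersen's hereditary lifting lemma \cite[Lemma 3.13]{rr0}: when projections lift from $B/J$ to $B$, any projection in the hereditary subalgebra $\overline{(x+J)(B/J)(x+J)}$ lifts to a projection in $\overline{xBx}$. Concretely, the paper sets $C=\overline{xAx}$, notes that $\theta(\pi(C))$ is the hereditary subalgebra of $B/J$ generated by $x+J$, applies Corollary \ref{cor: rr0hereditary} to get a projection $q\in\overline{(x+J)(B/J)(x+J)}$ with $\|(x+J)-q(x+J)\|\leq\epsilon$, and then lifts $q$ to $p\in\overline{xBx}$ via \cite[Lemma 3.13]{rr0}. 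With $p$ in this hereditary subalgebra, $py=0$ is automatic from $xy=0$; moreover the correction projection $r$ can be produced in $(1-p)\overline{xJx}(1-p)$ (real rank zero of $J$ passes to this hereditary subalgebra), so that $r$ is genuinely orthogonal to $p$, lies in $\overline{xBx}$, hence kills $y$, and $p_1=p+r$ is an honest projection with $p_1y=0$ and $\|x-p_1x\|\leq 2\epsilon$. Without \cite[Lemma 3.13]{rr0} or an equivalent device forcing the lift into $\overline{xBx}$, the final assembly in your argument does not go through.
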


\begin{proof}
Let $x,y\in A^{\dagger}$ be positive orthogonal elements and $\epsilon>0$. Since $xy=0$, at least one of them is in $A$, so let us assume that $x\in A$. Let $C=\overline{xAx}$ and $\pi:A\to A/I$ the canonical quotient map.\ Then $\theta(\pi(C))$ is the hereditary subalgebra of $B/J$ generated by $x+J$.\ By Corollary \ref{cor: rr0hereditary}, the inclusion $\theta(\pi(C))\subset \overline{(x+J)B/J(x+J)}$ has real rank zero, so there exists a projection $q\in \overline{(x+J)B/J(x+J)}$ such that 
\begin{equation}\label{eq: PermProp3}
\|(x+J)-q(x+J)\|\leq\epsilon.
\end{equation}
By assumption, $q$ lifts to a projection $p$ in $B$, which can be assumed to be in $\overline{xBx}$ by {\cite[Lemma 3.13]{rr0}}.

Since $J$ has real rank zero, we get that $(1-p)\overline{xJx}(1-p)$ has real rank zero.\ Moreover, \eqref{eq: PermProp3} gives that $\|(x-px)+J\|\leq\epsilon$, so there exists a projection $r\in(1-p)\overline{xJx}(1-p)$ such that 
\begin{equation}\label{eq: PermProp4}
\|(x-px)-r(x-px)\|\leq2\epsilon.
\end{equation}
Note that $p$ and $r$ are orthogonal, so let $p_1=p+r$ be a projection in $\overline{xB^{\dagger}x}$. Since $xy=0$, it follows that $p_1y=0$. Then, by \eqref{eq: PermProp4}, we also have that $\|x-p_1x\|\leq2\epsilon$, so $A\subseteq B$ has real rank zero by Theorem \ref{thm: eqdefnonunital}(iv).
\end{proof}

\section{K-theoretic properties}\label{sect: Kth}

In this section we characterise inclusions of real rank zero by appealing to the pairing between $K$-theory and traces.\ The motivation for this approach comes from \cite{rordamZ}, where Rørdam proved that for a certain class of $\C$-algebras, real rank zero is equivalent to the image of the pairing map in $K$-theory to be uniformly dense in the set of affine functions on the trace space (\cite[Theorem 7.2]{rordamZ}).\ The proof of the next result essentially follows the strategy in {\cite[Proposition 7.1]{rordamZ}} and {\cite[Theorem 7.2]{rordamUHF}}.

Suppose that $A\subseteq B$ is a unital inclusion of $\C$-algebras.\ Let $T(A)$ denote the space of tracial states on $A$ and $\Aff(T(A))$ the space of continuous affine functions $T(A)\to\mathbb{R}$.\ Then there is a canonical map $\gamma:T(B)\to T(A)$ induced by restriction.\ Moreover, there is a canonical affine map $\gamma^*:\Aff(T(A))\to \Aff(T(B))$, which is dual to $\gamma$ (see \cite{Kad51}).\ Moreover, for a unital $\C$-algebra $C$, the natural pairing between $K$-theory and traces is a map $\rho_C:K_0(C)\to \Aff(T(C))$.\ For $n\in\mathbb{N}$ and $\tau\in T(C)$, write $\tau_n$ for the canonical non-normalized extension of $\tau$ to a tracial functional on $M_n(C)$ and define

$$\rho_C([p]_0 - [q]_0)(\tau) := \tau_n(p - q)$$
for all $\tau\in T(C)$ and all projections $p,q\in M_n(C)$.

If $C$ is a unital $\C$-algebra and $a,b\in C\otimes \mathcal{K}$ are positive, we will write $a\preceq b$ to mean that there exists a sequence $(x_n)_{n\in\mathbb{N}}\subset C\otimes \mathcal{K}$ such that $x_nbx_n^*\to a$ as $n\to\infty$.\ In this case, $a$ is said to be \emph{Cuntz subequivalent} to $b$. Given $\tau\in T(C)$ and $a \in M_n(C)_+$, define $$d_{\tau}(a) :=\lim\limits_{r\to\infty} \tau_n(a^{1/r}),$$
where $\tau_n$ is the non-normalized extension of $\tau$ to $M_n(C)$.\ Following \cite[Definition 1.5]{BBSTWW}, we say that $C$ has \emph{strict comparison of positive elements with
respect to tracial states} if whenever $a, b \in M_n(C)_+$ for some $n\in\mathbb{N}$ and $a\in\overline{M_n(C)bM_n(C)}$, we have 
$$d_{\tau}(a)<d_{\tau}(b) \ \text{for all} \ \tau\in T(C)\ \implies a\preceq b.$$ Moreover, $C$ is said to have \emph{stable rank one} if invertible elements are dense in $C$ (see \cite{SR1}).

\begin{theorem}\label{thm: traceimpliesRR0}
Let $A\subseteq B$ be a unital inclusion of separable $\C$-algebras such that every nonzero positive element in $A$ is full in $B$.\ Suppose further that $B$ has stable rank one and strict comparison of positive elements with respect to tracial states.\ If $\gamma^*\left(\Aff(T(A))\right)\subseteq\overline{\rho_B(K_0(B))}$, then the inclusion $A\otimes\mathcal{K}\subseteq B\otimes\mathcal{K}$ has real rank zero.
\end{theorem}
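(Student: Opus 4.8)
The plan is to verify the inclusion $A\otimes\mathcal{K}\subseteq B\otimes\mathcal{K}$ has real rank zero directly from Definition \ref{defn: RR0inclusions}, by showing that for every nonzero positive $a\in A\otimes\mathcal{K}$ the hereditary subalgebra $\overline{a(B\otimes\mathcal{K})a}$ has an approximate unit of projections. Writing $C=B\otimes\mathcal{K}$ and $D=\overline{aCa}$, the key reduction is that $a$ is a strictly positive element of $D$, so it suffices to produce, for each $\epsilon>0$, a single projection $q\in D$ with $\|qa-a\|\le\epsilon$: any such sequence of projections is then automatically an approximate unit for $D$. Moreover, a projection $q$ with $q(a-\epsilon)_+=(a-\epsilon)_+$ already works, since then $qa-a=(q-1)(a-(a-\epsilon)_+)$, whence $\|qa-a\|\le\|a-(a-\epsilon)_+\|\le\epsilon$. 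So the goal becomes: find a projection $q\in D$ that acts as a unit on $(a-\epsilon)_+$. Using the standard perturbation that $\|a-a'\|<\eta$ forces $(a'-\eta)_+\in\overline{aCa}$ together with $(a'-\eta)_+\preceq a$, I may replace $a$ by an element of $\bigcup_n M_n(A)_+$ and work in $M_n(B)$, which inherits stable rank one and strict comparison, and where $T(M_n(A))\cong T(A)$ and $K_0(M_n(B))\cong K_0(B)$ transport the hypothesis $\gamma^*(\Aff(T(A)))\subseteq\overline{\rho_B(K_0(B))}$ to the matricial level.

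The heart of the argument is to manufacture $q$ from the trace hypothesis, following the template of \cite[Proposition 7.1]{rordamZ} and \cite[Theorem 7.2]{rordamUHF}. Fix a continuous cut-off $g\colon[0,\infty)\to[0,1]$ vanishing on $[0,\epsilon/4]$ and equal to $1$ on $[\epsilon/2,\infty)$, so that $g(a)\in M_n(A)_+$ and $d_\tau((a-\epsilon/2)_+)\le\tau(g(a))\le d_\tau(a)$ for all $\tau$. Since $g(a)\in M_n(A)$, the continuous affine function $h(\tau)=\tau(g(a))$ lies in $\gamma^*(\Aff(T(A)))$, hence in $\overline{\rho_B(K_0(B))}$. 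Because $\gamma^*(\Aff(T(A)))$ is a linear subspace containing the constants, I can perturb $h$ by a small constant and approximate it by $\rho_B(x)$ for a suitable $x\in K_0(B)$, so that the continuous function $\rho_B(x)$ is sandwiched between rank functions of cut-offs of $a$. Strict comparison of positive elements, applied with the usual cut-off trick (so that non-strict inequalities of rank functions yield Cuntz subequivalences of cut-offs), then gives $(a-\epsilon)_+\preceq p\preceq a$ for a projection $p$ representing $x$. Fullness enters precisely here: every nonzero positive element of $A$ is full in $B$, so the cut-offs of $a$ (lying in $A\otimes\mathcal{K}$) generate $C$ as an ideal, and the ideal-theoretic side condition in the definition of strict comparison is met automatically.

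With $p$ in hand, stable rank one upgrades the Cuntz data to a genuine unit relation. The relation $p\preceq a$ together with stable rank one lets me replace $p$ by a Murray--von Neumann equivalent projection inside $D=\overline{aCa}$; then $(a-\epsilon)_+\preceq p$ inside $D$ (which is again of stable rank one, being hereditary) produces a unitary $u$ in the unitisation of $D$ with $u(a-\epsilon)_+u^*$ supported under $p$, so that $q:=u^*pu\in D$ satisfies $q(a-\epsilon)_+=(a-\epsilon)_+$. This yields $\|qa-a\|\le\epsilon$, completing the construction and hence, by the first paragraph, the proof.

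The step I expect to be the main obstacle is the manufacture of $p$ in the second paragraph, namely realising a continuous affine function threading between the two lower semicontinuous rank functions $d_\tau((a-\epsilon/2)_+)$ and $d_\tau(a)$ as the class of a projection in $\rho_B(K_0(B))$. The difficulty is genuine: at traces where $a$ has a spectral gap above $0$ these two rank functions coincide, so the threading value is pinned and the approximation coming from $h\in\overline{\rho_B(K_0(B))}$ must be controlled exactly there. This is where the hypothesis is used in full strength and where the Cuntz-semigroup machinery for algebras of stable rank one (comparison, and the realisation of ranks by projections in full hereditary subalgebras) does the real work: the cut-off slack in strict comparison absorbs the approximation error away from the pinned traces, while fullness keeps the resulting projections inside the correct hereditary subalgebra. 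Assembling these ingredients as in Rørdam's proofs, relativised to the inclusion through $\gamma^*$, gives the statement.
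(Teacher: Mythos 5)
Your skeleton is the same as the paper's (which follows R{\o}rdam's template from \cite{rordamZ} and \cite{rordamUHF}): for each cut-off level of $a$, produce a projection sandwiched between two spectral cut-offs of $a$ by realising a tracially defined affine function coming from $A$ as (approximately) $\rho_B(x)$ for some $x\in K_0(B)$, convert the resulting strict trace inequalities into Cuntz subequivalences via strict comparison, and use stable rank one to upgrade the Cuntz data to a projection acting as a unit on the lower cut-off. Your reduction to $M_n(A)\subseteq M_n(B)$ by perturbation is a harmless variant of the paper's route (prove it for all matrix amplifications, then invoke the inductive limit permanence of Proposition \ref{prop: rr0inductivelim}).

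However, there is a genuine gap at precisely the step you flag as the main obstacle, and the mechanism you offer for it (``the cut-off slack in strict comparison absorbs the approximation error away from the pinned traces'') is not an argument: strict comparison requires strict inequality at \emph{every} trace, so a failure at pinned traces cannot be absorbed elsewhere. Concretely, if $a$ is a projection and $\epsilon<1$, then $\tau(g(a))=d_\tau((a-\epsilon/2)_+)=d_\tau((a-\epsilon)_+)=d_\tau(a)$ for every $\tau$, so no element of $K_0(B)$ can be strictly sandwiched at all, and your construction halts. The missing idea is the dichotomy the paper uses. Either $\sigma(a)$ misses the relevant interval (the paper's $(\delta/8,\delta/2)$), in which case the desired projection exists trivially as a spectral projection $f(a)\in C^*(a)$ and no trace data is needed; or $\sigma(a)$ meets that interval, in which case a bump function $g$ supported in $(\delta/8,\delta/2)$ yields a \emph{nonzero} positive element $g(a)$ of $A$ (or $M_n(A)$), which is full in $B$ by hypothesis, whence $\tau(g(a))>0$ for every tracial state and, by compactness of $T(B)$ and continuity of $\tau\mapsto\tau(g(a))$, is bounded below by some $\eta>0$ uniformly. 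That uniform $\eta$ is what separates $d_\tau(f_{\delta/2}(a))$, $\tau(f(a))$ and $d_\tau(f_{\delta/8}(a))$ at all traces simultaneously, so that approximating $\tau\mapsto\tau(f(a))$ within $\eta/2$ by some $\rho_B(x)$ preserves all strict inequalities. In other words, under the fullness hypothesis pinning can only occur globally, and exactly then the projection comes for free; fullness is what rules out the mixed situation you worry about, rather than ``keeping projections in the correct hereditary subalgebra.'' (Fullness does play a second role, which you only half-identify: $f_{\delta/8}(a)$ full gives, via Brown's theorem, that $\overline{f_{\delta/8}(a)Bf_{\delta/8}(a)}$ is stably isomorphic to $B$ and hence has stable rank one, which is what lets one run the final sandwiching step of \cite[Theorem 7.2]{rordamUHF}.) Without the dichotomy and the uniform-gap argument, the central step of your proof is unproved and, as stated, false.
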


\begin{proof}
We will first show that the inclusion $A\subseteq B$ has real rank zero.\ Let $a\in A$ be a nonzero positive element and $E=\overline{aBa}$.\ Then, for any $\epsilon>0$ define $f_{\epsilon}:\mathbb{R}_+\to\mathbb{R}$ by  \[  f_{\epsilon}(t)= 
\begin{cases} 
      0 & t\leq \epsilon \\
      \frac{t-\epsilon}{\epsilon} & \epsilon\leq t\leq 2\epsilon \\
      1 & 2\epsilon\leq t.
   \end{cases}
\]
We claim that $E$ has an approximate unit of projections.\ For this, it suffices to find projections $p_j\in B$ such that $$f_{\delta_1}(a)\leq p_1\leq f_{\delta_2}(a)\leq p_2\leq \ldots,$$ where $\delta_j=16^{-j}$. Then $\{p_j\}_{j\geq 1}$ will be an approximate unit for $E$. For each $\delta>0$, it suffices to find a projection $p\in B$ such that $$f_{\delta}(a)\leq p \leq f_{\delta/16}(a).$$

Denote by $\sigma(a)$ the spectrum of $a$ in $B$.\ If $\sigma(a)\cap (\delta/8,\delta/2)=\emptyset$, then we can take the projection $p=f(a)$, where 
\begin{equation}\label{eq: intermf}
f(t)= 
\begin{cases} 
      0 & t\leq \delta/8 \\
      \text{linear} & \delta/8\leq t\leq \delta/2 \\
      1 & \delta/2\leq t.
   \end{cases}
\end{equation} If there exists $\lambda\in \sigma(a)\cap (\delta/8,\delta/2)$, then take $g$ to be a continuous function, pointwise smaller than $f_{\delta/8}$, with open support $(\delta/8,\delta/2)$.\ One can note that $f_{\delta/8}(a)\geq g(a)+f_{\delta/2}(a)$ with $g$ and $f_{\delta/2}$ orthogonal.\ Therefore,
\begin{equation}
    d_{\tau}(f_{\delta/2}(a)) + d_{\tau}(g(a))\leq d_{\tau}(f_{\delta/8}(a)),\label{eq4}
\end{equation} for all $\tau\in T(B)$. Since $\sigma(a)\cap (\delta/8,\delta/2)\neq \emptyset$, $g(a)$ is non-zero.\ By assumption, $g(a)$ is full in $B$, so
\begin{equation}
    d_{\tau}(f_{\delta/2}(a))< d_{\tau}(f_{\delta/8}(a)),\label{eq5}
\end{equation} for all $\tau\in T(B)$.

Consider the function $f$ defined in \eqref{eq: intermf}.\ Then,
\begin{equation}\label{eq: RHSIneq}
\tau(f(a))< {\tau}(f_{\delta/8}(a)) \leq d_{\tau}(f_{\delta/8}(a)),
\end{equation} for all $\tau\in T(B)$.\ Moreover, if we consider a continuous function $h$, pointwise smaller than $f$, and with open support $(\delta/8,\delta/2)$, we see that $$\tau(f_{\delta/2}(a)^{1/n})+\tau(h(a))<\tau(f(a))$$ for any $\tau\in T(B)$ and any $n\in\mathbb{N}$.\ Since $\sigma(a)\cap (\delta/8,\delta/2)\neq \emptyset$, $h(a)$ is non-zero and hence full by assumption.\ Combining this with \eqref{eq: RHSIneq} yields that $$ d_{\tau}(f_{\delta/2}(a))<\tau(f(a))< d_{\tau}(f_{\delta/8}(a))$$ for any $\tau\in T(B)$.

The map $\tau\mapsto \tau(f(a))$ is in the image of $\gamma^*$, which is contained in $\overline{\rho_B(K_0(B))}$ by assumption.\ Therefore, we can find $g\in K_0(B)$ such that
\begin{equation}
    d_{\tau}(f_{\delta/2}(a))< \tau(g)< d_{\tau}(f_{\delta/8}(a)),\label{eq7}
\end{equation} for all $\tau\in T(B)$. Since $B$ has strict comparison of positive elements with respect to tracial states, we get that $g=[q]_0\in K_0(B)$, for some projection $q\in M_n(B)$.\footnote{This is a standard trick and goes as follows: write $g= [q']_0 - [p']_0$ for projections $p',q' \in M_n(B)$ for some $n\in \mathbb N$. As $\tau(g) > 0$ for all $\tau\in T(B)$ we have $\tau_n(p') < \tau_n(q')$ for all $\tau\in T(B)$. Hence $p'$ is subequivalent to $q'$ so  $p' \sim q'' \leq q'$ for some projection $q''$. Letting $q = q' - q''$, we have $g = [q]_0 \in K_0(B)$.} Moreover, since $d_\tau(q)=\tau(q)$, strict comparison of $B$ yields that $$f_{\delta/2}(a)\preceq q\preceq f_{\delta/8}(a).$$ 

Furthermore, $f_{\delta/8}(a)$ is full in $B$, so the hereditary $\C$-subalgebra $\overline{f_{\delta/8}(a)Bf_{\delta/8}(a)}$ is stably isomorphic to $B$ by \cite[Theorem 2.8]{Brown77}.\ Since stable rank one is preserved by stable isomorphisms (\cite[Theorem 3.6]{SR1}), it follows that $\overline{f_{\delta/8}(a)Bf_{\delta/8}(a)}$ has stable rank one.\ Then, one can follow the last part of the proof of \cite[Theorem $7.2$]{rordamUHF} to obtain a projection $p\in B$ such that $f_{\delta}(a)\leq p \leq f_{\delta/16}(a).$

Let $n\in\mathbb{N}$ and consider the unital inclusion $M_n(A)\subseteq M_n(B)$.\ Since $A\subseteq B$ is full, so is the inclusion $M_n(A)\subseteq M_n(B)$ (\cite[Lemma 3.16]{oinftyclass}).\ Moreover, strict comparison passes to matrix amplifications, and so does stable rank one (\cite[Theorem 3.3]{SR1}).\ Lastly, one also has that $\gamma^*\left(\Aff(T(M_n(A)))\right)\subseteq\overline{\rho_{M_n(B)}(K_0(M_n(B)))}$.\ Hence the proof above shows that the inclusion $M_n(A)\subseteq M_n(B)$ has real rank zero.\ Thus, $A\otimes\mathcal{K}\subseteq B\otimes\mathcal{K}$ has real rank zero by Proposition \ref{prop: rr0inductivelim}.
\end{proof}

The next theorem provides a converse for Theorem \ref{thm: traceimpliesRR0}.

\begin{theorem}\label{thm: RR0impliestrace}
Let $A\subseteq B$ be a unital inclusion of separable $\C$-algebras such that $A$ is exact, has stable rank one and no nonzero finite dimensional representations.\ If $A\otimes\mathcal{K}\subseteq B\otimes\mathcal{K}$ has real rank zero, then $\gamma^*\left(\Aff(T(A))\right)\subseteq\overline{\rho_B(K_0(B))}$. 
\end{theorem}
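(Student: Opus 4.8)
The plan is to prove the contrapositive in the usual way for such dense-range statements: I would assume that $\gamma^*(\Aff(T(A)))$ is \emph{not} contained in $\overline{\rho_B(K_0(B))}$ and construct, from this failure, a positive element $a \in A$ whose hereditary algebra $\overline{aBa}$ admits no approximate unit of projections, contradicting real rank zero of the inclusion. The natural object to exploit is a continuous affine function $\phi \in \Aff(T(A))$ such that $\gamma^*(\phi)$ sits at positive distance from $\overline{\rho_B(K_0(B))}$; since $A$ is unital, exact and has no finite-dimensional representations, the trace space $T(A)$ is nonempty and the pairing theory applies. The hypotheses that $A$ is exact with stable rank one and no nonzero finite-dimensional representations are exactly what one needs to realise affine functions on $T(A)$ (approximately) as ranks of positive elements: by the work on the Cuntz semigroup of stably finite, stable-rank-one algebras (via strict comparison and the fact that $\mathrm{Cu}(A)$ recovers lower-semicontinuous affine functions on the trace simplex), one can produce a self-adjoint/positive $a \in A$ whose rank function $\tau \mapsto d_\tau(a)$ approximates $\phi$.

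First I would set up the $K_0$--trace pairing and recall that for a unital inclusion the restriction map $\gamma : T(B) \to T(A)$ and its dual $\gamma^*$ are well behaved. Then I would translate the real rank zero hypothesis into a statement about projections: if $A \otimes \mathcal K \subseteq B \otimes \mathcal K$ has real rank zero, then for every positive $a \in A \otimes \mathcal K$ (in particular for a carefully chosen $a \in M_n(A)$), the hereditary algebra $\overline{a(B\otimes\mathcal K)a}$ has an approximate unit of projections, so the rank function $d_\tau(a)$ can be approximated from below by traces of projections $q \in M_m(B)$, i.e.\ by elements of $\rho_B(K_0(B))$ evaluated on $\tau \in T(B)$. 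Composing with $\gamma$, this says that $\gamma^*(d_\tau(a)|_{T(A)})$ is approximable by elements of $\rho_B(K_0(B))$ in the relevant seminorm. The heart of the argument is then to show that as $a$ ranges over suitable positive elements of matrix amplifications of $A$, the functions $\tau \mapsto d_\tau(a)$ (restricted to $T(A)$ and pushed through $\gamma^*$) are dense enough in $\gamma^*(\Aff(T(A)))$ to force the containment in $\overline{\rho_B(K_0(B))}$.

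Concretely, I would proceed as follows. Fix $\phi \in \Aff(T(A))_+$; by density of the ranks of positive elements in $\mathrm{LAff}(T(A))_{++}$ for exact stable-rank-one algebras with no finite-dimensional representations, choose $a \in M_n(A)_+$ with $d_\tau(a) \approx \phi(\tau)$ uniformly over $\tau \in T(A)$. Apply real rank zero of $M_n(A) \subseteq M_n(B)$ (which holds by the inductive-limit and matrix-amplification arguments already used in Theorem \ref{thm: traceimpliesRR0}, via \cite[Lemma 3.16]{oinftyclass} and Proposition \ref{prop: rr0inductivelim}) to obtain a projection $p \in M_n(B)$ lying in $\overline{a M_n(B) a}$ with $\|pa - a\|$ small. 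For such $p$ one gets $d_\tau(a) \approx \tau_n(p)$ for all $\tau \in T(B)$, and $\tau \mapsto \tau_n(p) = \rho_B([p]_0)(\tau)$ lies in $\rho_B(K_0(B))$. Restricting along $\gamma$ gives $\gamma^*(\phi)(\sigma) = \phi(\gamma(\sigma)) \approx \rho_B([p]_0)(\sigma)$ for all $\sigma \in T(B)$, so $\gamma^*(\phi)$ is within the prescribed error of $\rho_B(K_0(B))$; letting the error tend to zero yields $\gamma^*(\phi) \in \overline{\rho_B(K_0(B))}$. Finally I would reduce the general (not necessarily positive) case by writing an arbitrary element of $\Aff(T(A))$ as a difference of positive affine functions plus a constant multiple of the constant function $1$ (whose image $\gamma^*(1)$ is the class of the unit and hence already in $\rho_B(K_0(B))$), using linearity of $\gamma^*$ and the fact that $\overline{\rho_B(K_0(B))}$ is a subgroup. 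The main obstacle I anticipate is the approximation step producing $a$ with $d_\tau(a) \approx \phi$ \emph{uniformly} on $T(A)$ together with the passage from ``$\overline{aBa}$ has an approximate unit of projections'' to the \emph{uniform-in-$\tau$} estimate $\tau_n(p) \approx d_\tau(a)$; controlling this uniformly (rather than pointwise) over the possibly non-compact-looking but actually compact simplex $T(A)$, and ensuring the chosen projection's rank function tracks $d_\tau(a)$ closely in trace for \emph{all} traces simultaneously, is where the exactness and strict-comparison-type input must be invoked carefully.
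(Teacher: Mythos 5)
Your overall strategy is the same as the paper's: realize a strictly positive $f\in\Aff(T(A))$ as the rank function of a positive element $a$ (this is where exactness, stable rank one and the absence of finite-dimensional representations enter, via quasitraces-are-traces and the rank-realization theorem), then use real rank zero of the stabilised inclusion to produce projections in the hereditary subalgebra of $a$ whose traces approximate $d_\tau(a)$, and finally reduce general $f$ to the strictly positive case by adding a large constant multiple of the unit. However, the step you yourself flag as the ``main obstacle'' is a genuine gap, and the repair you gesture at is not available. A single projection $p\in\overline{aM_n(B)a}$ with $\|pa-a\|\leq\epsilon$ only gives the two-sided estimate $d_\tau\bigl((a-2\epsilon)_+\bigr)\leq\tau_n(p)\leq d_\tau(a)$ for $\tau\in T(B)$, and for a \emph{fixed} $\epsilon$ the lower bound can be far from $d_\tau(a)$ at some traces, no matter how small $\epsilon$ is: the functions $\tau\mapsto d_\tau\bigl((a-\epsilon)_+\bigr)$ are merely lower semicontinuous and increase to $d_\tau(a)$ only as $\epsilon\downarrow 0$, with no built-in uniformity. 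So your claim that $\tau_n(p)\approx d_\tau(a)$ for \emph{all} $\tau\in T(B)$ does not follow from $\|pa-a\|$ being small. Worse, the tool you propose to control this --- ``strict-comparison-type input'' --- is not among the hypotheses of Theorem \ref{thm: RR0impliestrace}; strict comparison of $B$ is assumed only in the converse direction, Theorem \ref{thm: traceimpliesRR0}, so it cannot be invoked here.

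The paper closes exactly this gap with two ingredients your sketch is missing. First, the realization is taken to be \emph{exact}, not approximate: there is a positive $a\in A\otimes\mathcal{K}$ with $d_\tau(a)=f(\tau)$ for all $\tau\in T(A)$, so the target of the approximation is a genuinely continuous function on the compact set $T(B)$. Second, by \cite[Proposition 1.2]{Zha91}, the approximate unit of projections of the hereditary subalgebra generated by $a$ in $B\otimes\mathcal{K}$ can be chosen \emph{increasing}; then $\tau(p_n)\nearrow d_\tau(a)=\gamma^*(f)(\tau)$ pointwise and monotonically on $T(B)$, and Dini's theorem upgrades this to uniform convergence. This monotonicity-plus-compactness argument is precisely the pointwise-to-uniform passage your proposal lacks; without it, pointwise approximation of $d_\tau(a)$ by traces of projections does not place $\gamma^*(f)$ in $\overline{\rho_B(K_0(B))}$, since that closure is taken in the uniform norm on $\Aff(T(B))$. (Your final reduction of general $f$ to the positive case, and the observation that constants lie in $\rho_B(K_0(B))$ because the inclusion is unital, matches the paper and is fine.)
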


\begin{proof}
If $T(B)$ is the empty set, then the conclusion is vacuously true, so let us assume that $T(B)\neq \emptyset$.\ Let $f$ be a strictly positive continuous affine function on $T(A)$.\ Moreover, $A$ is unital and exact, so all quasitraces are traces by \cite{haagerup}.\ Then, there exists a positive element $a\in A\otimes\mathcal{K}$ such that $d_{\tau}(a)=f(\tau)$ for all $\tau\in T(A)$ (\cite[Theorem 7.14]{CuntzSemigrRanks} see also the second theorem in \cite[pp.38]{CuntzSemigrRanks}).\ Since the inclusion $A\otimes\mathcal{K}\subseteq B\otimes\mathcal{K}$ has real rank zero, there exists an increasing approximate unit of projections $(p_n)_{n\geq 1}$ for the hereditary subalgebra generated by $a\in A\otimes\mathcal{K}$ in $B\otimes\mathcal{K}$ (\cite[Proposition 1.2]{Zha91}).

In particular, for any $\tau\in T(B)$, $\tau(p_n)=d_{\tau}(p_n)$ converges pointwise to $d_{\tau}(a)=f(\tau)$, where $f(\tau)$ denotes the evaluation of $f$ at the restriction of $\tau$ to $A$. But since $f$ is continuous, Dini's theorem gives that $\tau(p_n)$ converges uniformly to $f(\tau)$. Thus, $\gamma^*(f)$ lies in $\overline{\rho_B(K_0(B))}$.

Now, if $f$ is any continuous affine function on $T(A)$, there exists some $n\in\mathbb{N}$ such that $f+n$ is strictly positive. Since $\gamma^*(n)$ is the constant function $n$ and the inclusion is unital, $\gamma^*(n)\in\overline{\rho_B(K_0(B))}$.\ Hence, $\gamma^*(f)$ lies in $\overline{\rho_B(K_0(B))}$. 
\end{proof}

\begin{remark}
We do not know if real rank zero passes to matrix amplifications, hence the assumption that the stabilised inclusion has real rank zero in the theorem above.
\end{remark}

\begin{question}
Let $A\subseteq B$ be an inclusion of $\C$-algebras and suppose that $A\subseteq B$ has real rank zero.\ Does the induced inclusion $M_2(A)\subseteq M_2(B)$ have real rank zero?
\end{question}

In fact, further assuming strict comparison on $B$ we can strengthen the result in Theorem \ref{thm: RR0impliestrace}.\ Similarly to {\cite[Theorem 7.2]{rordamZ}}, we can show that projections coming from $A\otimes\mathcal{K}$ can be weakly divided in $K_0(B)_+$. 

\begin{theorem}\label{weakdiv}
Let $A\subseteq B$ be a unital inclusion of separable $\C$-algebras such that $A$ is exact, has stable rank one and no nonzero finite dimensional representations.\ Suppose further that $B$ is simple, stably finite, and has strict comparison of projections with respect to tracial states.\ If $A\otimes\mathcal{K}\subseteq B\otimes\mathcal{K}$ has real rank zero, then for all $x\in K_0(A)_+$ and all $n\geq 2$, there exist $g_x^{(n)},h_x^{(n)}\in K_0(B)_+$ such that $$K_0(\iota)(x)=ng_x^{(n)}+(n+1)h_x^{(n)},$$ where $\iota$ denotes the inclusion map of $A$ into $B$.
\end{theorem}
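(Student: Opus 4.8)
The plan is to reduce the desired $K$-theoretic identity to a trace-level approximation combined with an application of strict comparison, exploiting that $n$ and $n+1$ are coprime. First I would dispose of the trivial case $x=0$ by taking $g_x^{(n)}=h_x^{(n)}=0$, so assume $x\neq 0$ and write $x=[p]_0$ for a nonzero projection $p\in M_k(A)$. Set $y:=K_0(\iota)(x)\in K_0(B)_+$ and recall the naturality identity $\rho_B(y)=\gamma^*(\rho_A(x))\in\Aff(T(B))$. Since $B$ is simple, every tracial state on $B$ is faithful, so $\rho_B(y)(\tau)=\tau_k(p)>0$ for every $\tau\in T(B)$; as $T(B)$ is weak-$*$ compact this yields a uniform lower bound $\delta:=\min_{\tau}\rho_B(y)(\tau)>0$. (That $T(B)\neq\emptyset$ is itself forced by simplicity and stable finiteness, as otherwise strict comparison would apply vacuously and contradict stable finiteness already in $M_2(B)$.)

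The algebraic heart of the argument is the observation that, for any $t\in K_0(B)$, the elements $g:=(n+1)t-y$ and $h:=y-nt$ satisfy $ng+(n+1)h=y$ identically in $K_0(B)$. Thus it suffices to produce a single $t\in K_0(B)$ for which both $g$ and $h$ land in $K_0(B)_+$. Since $\rho_B(g)=(n+1)\rho_B(t)-\rho_B(y)$ and $\rho_B(h)=\rho_B(y)-n\rho_B(t)$, at the trace level this amounts to solving the strict inequalities
\begin{equation*}
\frac{\rho_B(y)(\tau)}{n+1}<\rho_B(t)(\tau)<\frac{\rho_B(y)(\tau)}{n}\qquad(\tau\in T(B)).
\end{equation*}
This admissible band is nonempty at every $\tau$ precisely because $\frac1n-\frac1{n+1}=\frac1{n(n+1)}>0$ and $\rho_B(y)(\tau)\geq\delta$; its half-width is bounded below by $\eta:=\delta/(2n(n+1))>0$ uniformly in $\tau$.

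To find such a $t$ I would invoke Theorem \ref{thm: RR0impliestrace}: since the stabilised inclusion has real rank zero and $A$ is exact with stable rank one and no nonzero finite dimensional representations, we have $\gamma^*(\Aff(T(A)))\subseteq\overline{\rho_B(K_0(B))}$. The midpoint function $M:=\frac{2n+1}{2n(n+1)}\,\rho_B(y)=\gamma^*\bigl(\tfrac{2n+1}{2n(n+1)}\rho_A(x)\bigr)$ lies in $\gamma^*(\Aff(T(A)))$, hence in $\overline{\rho_B(K_0(B))}$, so there is $t\in K_0(B)$ with $\|\rho_B(t)-M\|_\infty<\eta$. Then $\rho_B(t)(\tau)$ lies in the open band for every $\tau$, so $\rho_B(g)$ and $\rho_B(h)$ are strictly positive on $T(B)$. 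Writing $g=[p_1]_0-[q_1]_0$ and $h=[p_2]_0-[q_2]_0$ (possible since $B$ is unital), strict positivity of the traces forces $p_i\neq 0$, so simplicity gives $q_i\in\overline{M_m(B)p_iM_m(B)}$; strict comparison of projections together with stable finiteness then upgrades the strict inequalities to $q_i\preceq p_i$ and hence to $g,h\in K_0(B)_+$. Setting $g_x^{(n)}=g$ and $h_x^{(n)}=h$ completes the proof.

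I expect the main obstacle to be conceptual rather than computational: passing from an approximate, trace-level division — which is all that the density of $\gamma^*(\Aff(T(A)))$ in $\overline{\rho_B(K_0(B))}$ directly provides — to an \emph{exact} identity in $K_0(B)$. This is exactly what the coprimality of $n$ and $n+1$ buys us, since it opens a band of positive width around the target value, allowing an approximate solution at the trace level to be promoted to an exact $K_0$-solution via strict comparison. Secondary points to verify carefully are the faithfulness of the traces on the simple algebra $B$ and the non-emptiness of $T(B)$, both of which follow from simplicity and stable finiteness.
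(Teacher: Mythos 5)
Your proof is correct and follows essentially the same route as the paper's: you approximate the midpoint function $\frac{2n+1}{2n(n+1)}\gamma^*(\rho_A(x))$ within tolerance $\delta/(2n(n+1))$ using Theorem \ref{thm: RR0impliestrace}, upgrade the resulting strict trace inequalities to order relations in $K_0(B)$ via strict comparison, and conclude with the identity $n((n+1)t-y)+(n+1)(y-nt)=y$. The only (harmless) deviations are cosmetic: you obtain the uniform lower bound $\delta>0$ from faithfulness of traces on the simple algebra $B$ rather than from the order-unit property cited in the paper, and you spell out explicitly both the non-emptiness of $T(B)$ and the projection-difference trick that the paper leaves implicit.
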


\begin{proof}
Let $x\in K_0(A)_+$ and $n\geq 2$.\ If $K_0(\iota)(x)=0$, then we are done, so let us suppose that $K_0(\iota)(x)\in K_0(B)_+$ is nonzero.\ Since $B$ is simple, stably finite and $K_0(\iota)(x)\in K_0(B)$ is positive and nonzero, it follows that $K_0(\iota)(x)$ is an order unit (see \cite[Corollary 6.3.6]{blackadar}).\ Moreover, $T(B)$ is compact by unitality of $B$, so we get that $\inf\limits_{\tau\in T(B)}\tau(K_0(\iota)(x))>0$.\ Let $\epsilon<\frac{1}{2n(n+1)}\inf\limits_{\tau\in T(B)}\tau(K_0(\iota)(x))$.

Recall that $\rho_A(x):T(A)\to\mathbb{R}$ is the affine continuous function given by $\rho_A(x)(\tau)=\tau(x)$ for any $\tau\in T(A)$.\ Then $\rho_A(x)\in \Aff(T(A))$ and since this is a vector space, $\alpha\rho_A(x)\in\Aff(T(A))$ for any $\alpha\in\mathbb{R}$.\ Take $\alpha_n=\frac{\frac{1}{n}+\frac{1}{n+1}}{2}$ and note that $\gamma^*(\alpha_n\rho_A(x))(\tau)=\alpha_n\tau(K_0(\iota)(x))$ for any $\tau\in T(B)$.\ Then, by Theorem \ref{thm: RR0impliestrace}, the image of the canonical map $\gamma^*:\Aff(T(A))\to \Aff(T(B))$ induced by $\iota$ is contained in $\overline{\rho_B(K_0(B))}$.\ Hence, there exists $y_n\in K_0(B)$ such that $$|\tau(y_n)-\alpha_n\tau(K_0(\iota)(x))|<\epsilon$$ for any $\tau\in T(B)$. Therefore, 
\begin{align*}
n\tau(y_n)&<n\alpha_n\tau(K_0(\iota)(x))+n\epsilon \\
&\leq n\Big(\alpha_n+\frac{1}{2n(n+1)}\Big)\tau(K_0(\iota)(x)) \\
&=\tau(K_0(\iota)(x))
\end{align*}for any $\tau\in T(B)$.\ Moreover, by the choice of $\epsilon$ and $\alpha_n$, we also get that $\tau(y_n)>0$ for any $\tau\in T(B)$.\ But $B$ has strict comparison, so $0\leq ny_n\leq K_0(\iota)(x)$.\ A similar calculation using that $(n+1)\tau(y_n)> (n+1)\alpha_n\tau(K_0(\iota)(x))-(n+1)\epsilon$ and strict comparison of $B$ yields that $(n+1)y_n\geq K_0(\iota)(x)$. 

Finally, set $g_x^{(n)}=(n+1)y_n-K_0(\iota)(x)$ and $h_x^{(n)}=K_0(\iota)(x)-ny_n$. It is immediate to check that $K_0(\iota)(x)=ng_x^{(n)}+(n+1)h_x^{(n)}$.
\end{proof}

\section{Purely infinite inclusions}\label{sect: purelyinf}

Cuntz introduced pure infiniteness of simple $\C$-algebras in \cite{Cuntz-Annals} as a $\C$-algebraic analogue of type $\mathrm{III}$ von Neumann algebra factors which have good $K$-theoretic properties. Following Kirchberg and Rørdam in \cite{purelyinfKR}, a (not necessarily simple) $\C$-algebra is \emph{purely infinite} if every non-zero positive element $a\in A$ is properly infinite, in the sense that $a\oplus a$ is Cuntz subequivalent to $a$. In this section we consider inclusions $A\subseteq B$ where each non-zero positive element in $A$ is properly infinite in $B$.

A classical result of Zhang (\cite{zhangRR0}) shows that any simple purely infinite $\C$-algebra has real rank zero.\ We will now prove the corresponding result for inclusions.

\begin{theorem}\label{thm: purelyinfrr0}
Let $A\subseteq B$ be an inclusion of $\C$-algebras such that every nonzero positive element in $A$ is full and properly infinite in $B$.\ Then $A\subseteq B$ has real rank zero.\ Moreover, for any nonzero positive element $a\in A$, $\overline{aBa}$ has an increasing approximate unit of full, properly infinite projections.
\end{theorem}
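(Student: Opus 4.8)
The plan is to prove the theorem using the characterization of real rank zero from Theorem \ref{thm: eqdefnonunital}, specifically condition (iv) about splitting orthogonal positive elements by a projection, while simultaneously upgrading the projections produced to be full and properly infinite. The key structural input is that pure infiniteness of positive elements in $A$ relative to $B$ should let me manufacture projections inside hereditary subalgebras $\overline{aBa}$ with great freedom. Concretely, a non-zero properly infinite positive element $a \in A$ satisfies $a \oplus a \preceq a$ in $B$, and fullness guarantees that $\overline{aBa}$ is large enough to contain suitable subequivalent copies. I expect to mimic Zhang's original argument for simple purely infinite algebras, but carried out in the relative setting where only elements of $A$ are assumed properly infinite.

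\textbf{Key steps.} First I would reduce to verifying condition (iv) of Theorem \ref{thm: eqdefnonunital}: given orthogonal positives $x,y \in A^{\dagger}$ with $xy=0$ and $\epsilon > 0$, I seek a projection $p \in B^{\dagger}$ with $\|px-x\|\leq\epsilon$ and $\|(1-p)y-y\|\leq\epsilon$. Since at least one of $x,y$ lies in $A$, say $x \in A$, the real task is to find a projection $p \in \overline{xBx}$ that nearly acts as a unit on $x$; orthogonality then forces $py=0$ automatically, exactly as in the proof of (i)$\Rightarrow$(iv). So the heart of the matter is: for non-zero positive $a \in A$ and $\epsilon>0$, produce a projection $p \in \overline{aBa}$ with $\|pa-a\|\leq\epsilon$. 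Here I would use that $(a-\epsilon)_+$ is itself a non-zero positive element of $A$ (hence full and properly infinite in $B$), and that proper infiniteness provides a projection $p$ in $\overline{(a-\epsilon)_+ B (a-\epsilon)_+} \subseteq \overline{aBa}$ dominating $(a-\epsilon)_+$ in the Cuntz sense; standard functional-calculus estimates then give $\|pa-a\|$ small. To obtain the projection concretely, I would invoke the fact that a properly infinite, full positive element supports a projection in its hereditary subalgebra that is itself full and properly infinite, using the Cuntz subequivalence $(a-\epsilon)_+ \preceq a$ together with a partial isometry argument.

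\textbf{Upgrading to full, properly infinite projections.} For the ``moreover'' clause I would show each projection $p$ constructed above can be taken full and properly infinite in $B$, and then arrange an increasing approximate unit with these properties. Fullness of $p$ should follow because $p$ dominates $(a-\epsilon)_+$, which is full by hypothesis, so the ideal generated by $p$ contains that of $(a-\epsilon)_+$, namely all of $B$. Proper infiniteness of $p$ I would derive from the proper infiniteness of the positive element it dominates: since $(a-\epsilon)_+ \oplus (a-\epsilon)_+ \preceq (a-\epsilon)_+ \preceq p$, and $p$ is a projection Cuntz-equivalent to a properly infinite element, $p$ itself is properly infinite (a projection Cuntz-equivalent to a properly infinite positive element is properly infinite). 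To get an \emph{increasing} approximate unit, I would apply the construction to a decreasing sequence $\epsilon_n \to 0$ and telescope, using that in a hereditary subalgebra one can always replace a family of projections that are approximately increasing by a genuinely increasing sequence via small perturbations, invoking semiprojectivity of $\mathbb{C}$ or a direct comparison argument to nest $p_n \leq p_{n+1}$.

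\textbf{Main obstacle.} The delicate point, and where I expect the real work to lie, is producing an \emph{honest projection} $p$ inside $\overline{aBa}$ rather than merely a positive element that is approximately a unit: proper infiniteness gives Cuntz subequivalences and partial isometries, but translating $a \oplus a \preceq a$ into an actual projection with controlled action on $a$ requires care, since $B$ is not assumed to have real rank zero or any projection-richness \emph{a priori} --- all of it must be extracted relatively from the hypotheses on $A$. I anticipate the cleanest route is to follow Zhang's lemmas on properly infinite elements verbatim, checking at each step that the only elements whose proper infiniteness is invoked belong to $A$ (or its hereditary subalgebras generated by elements of $A$), so that the relative hypothesis suffices; reconciling the passage between $\overline{aBa}$ and the subequivalent copies living elsewhere in $B$ while keeping everything inside the correct hereditary subalgebra is the step most likely to require a technical lemma.
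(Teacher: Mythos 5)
Your opening reduction is the same as the paper's: everything comes down to producing, for each nonzero positive $a\in A$ and $\epsilon>0$, a projection $p\in\overline{aBa}$ that (nearly) acts as a unit on $a$. But the mechanism you propose for this crucial step has a genuine gap. You ask for a projection $p\in\overline{(a-\epsilon)_+B(a-\epsilon)_+}$ with $(a-\epsilon)_+\preceq p$ and assert that ``standard functional-calculus estimates then give $\|pa-a\|$ small''. Cuntz dominance carries no spatial information, and this implication is false. Concretely, take $B$ simple and purely infinite and $a\in B_+$ with spectrum $[0,1]$ (so the hypotheses hold with $A=B$); let $g$ be a bump function supported in $(\epsilon,\epsilon+\delta)$ with $\delta$ tiny, and let $p$ be any nonzero projection in $\overline{g(a)Bg(a)}\subseteq\overline{(a-\epsilon)_+B(a-\epsilon)_+}$ (these exist by Zhang's theorem, and are automatically full and properly infinite). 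Then $(a-\epsilon)_+\preceq p$ by {\cite[Proposition 3.5]{purelyinfKR}}, yet $\|pa\|\leq\epsilon+\delta$, so $\|pa-a\|\geq 1-\epsilon-\delta$. What is actually needed is the exact relation $p(a-\epsilon)_+=(a-\epsilon)_+$, i.e.\ $p$ must contain the support of $(a-\epsilon)_+$, and no amount of Cuntz comparison alone delivers that. (Relatedly, your outline never establishes that $\overline{(a-\epsilon)_+B(a-\epsilon)_+}$ contains any projection at all --- that is essentially the content of the theorem --- and your ``Main obstacle'' paragraph defers precisely this point to ``Zhang's lemmas verbatim'' without supplying the argument.)

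The idea you are missing is the paper's scaling-element construction (Blackadar--Cuntz, packaged in {\cite[Remarks 2.4 and 2.5]{pasnicurordam}}). Pick $f$ continuous with open support in the window $(\epsilon/2,\epsilon)$. If $\sigma(a)$ misses this window, a spectral projection of $a$ works; otherwise $f(a)$ is a nonzero positive element of $A$, hence full and properly infinite in $B$, so $a\preceq f(a)$ by {\cite[Proposition 3.5]{purelyinfKR}} --- the whole of $a$ is compressed into the small spectral window, which is exactly where fullness and proper infiniteness enter. This subequivalence yields a contraction $x\in\overline{aBa}$ with $x^*x(a-\epsilon/2)_+=(a-\epsilon/2)_+$ and $xx^*\in\overline{f(a)Bf(a)}$, hence $xx^*(a-\epsilon)_+=0$ and $x^*x\,xx^*=xx^*$; then $v=x+(1-x^*x)^{1/2}$ is an isometry in the unitisation of $\overline{aBa}$ and $p=1-vv^*$ is an honest projection satisfying $p(a-\epsilon)_+=(a-\epsilon)_+$. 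Note also that this exact (not approximate) unit relation makes your perturbation/semiprojectivity argument for increasingness unnecessary: choosing $p_n$ in the hereditary subalgebra of $\bigl(a-\frac{1}{n+1}\bigr)_+$ with $p_n\bigl(a-\frac{1}{n}\bigr)_+=\bigl(a-\frac{1}{n}\bigr)_+$, the projection $p_{n+1}$ acts as a unit on the hereditary subalgebra containing $p_n$, so $p_{n+1}p_n=p_n$ and the sequence is genuinely increasing with no correction needed.
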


\begin{proof}
Let $a\in A_+$ be nonzero and $\epsilon>0$.\ It suffices to show that there exists a projection $p\in \overline{aBa}$ such that $p(a-\epsilon)_+=(a-\epsilon)_+$.\ We will construct this projection by obtaining a suitable scaling element in the sense of \cite[Definition 1.1]{ScalingElem} (see also \cite[Remark 2.4]{pasnicurordam}).\footnote{A contraction $x$ is called a scaling element if $x^*xxx^*=xx^*$.} Following  {\cite[Remark 2.4]{pasnicurordam}}, to obtain such a projection $p$, it suffices to find a contraction $x\in \overline{aBa}$ such that $$x^*xxx^*=xx^*, \quad x^*x(a-\epsilon)_+=(a-\epsilon)_+, \quad xx^*(a-\epsilon)_+=0.$$

Consider the spectrum of $a$ in $B$ denoted by $\sigma(a)$. If $\sigma(a)\cap (\epsilon/2,\epsilon)=\emptyset$, then we can take $p$
to be the orthogonal complement of the spectral projection of $a$ supported on $(\epsilon/2,\epsilon)$.\ Else, let $f:[0,\infty)\to[0,1]$ be a continuous function with open support $(\epsilon/2,\epsilon)$.\ Then, $f(a)$ is full and properly infinite in $B$, so $a\preceq f(a)$ ({\cite[Proposition 3.5]{purelyinfKR}}), with the Cuntz subequivalence taking place in the hereditary subalgebra $\overline{aBa}$.

Applying {\cite[Remark 2.5]{pasnicurordam}} to the subequivalence $a\preceq f(a)$, we can find a contraction $x\in \overline{aBa}$  such that \begin{equation}\label{scaling}
    x^*x(a-\epsilon/2)_+=(a-\epsilon/2)_+
\end{equation} and $$xx^*\in\overline{f(a)Bf(a)}\subseteq \overline{(a-\epsilon/2)_+B(a-\epsilon/2)_+}.$$

It follows that $(x^*x)(xx^*)=xx^*$, so $x$ is a scaling element in the sense of \cite[Definition 1.1]{ScalingElem}.\ Moreover, as $(a-\epsilon)_+\in \overline{(a-\epsilon/2)_+B(a-\epsilon/2)_+}$,  $x^*x(a-\epsilon)_+=(a-\epsilon)_+$.\ Furthermore, $xx^*(a-\epsilon)_+=0$, as $f(a)$ and $(a-\epsilon)_+$ are orthogonal. Following {\cite[Remark 2.4]{pasnicurordam}}, $v=x+(1-x^*x)^{1/2}$ is an isometry in the unitisation of $\overline{aBa}$ such that $p=1-vv^*$ is a projection in $\overline{aBa}$ with $p(a-\epsilon)_+=(a-\epsilon)_+$. Hence $A\subseteq B$ has real rank zero.

Note that the projection $p$ found above is full and properly infinite in $B$ since $(a-\epsilon)_+\preceq p$.\ Moreover, applying the proof above for every $n\in\mathbb{N}$, one can find a sequence of properly infinite projections $p_n\in \overline{\left(a-\frac{1}{n+1}\right)_+B\left(a-\frac{1}{n+1}\right)_+}$ such that $p_n\left(a-\frac{1}{n}\right)_+=\left(a-\frac{1}{n}\right)_+$.\ Hence, $(p_n)_{n\geq k}$, for any $k$ such that $p_k\neq 0$, is an increasing approximate unit for $\overline{aBa}$ consisting of full, properly infinite projections.
\end{proof}

A particular, but important class of inclusions as in the statement of Theorem \ref{thm: purelyinfrr0} is provided by the $^*$-homomorphisms classified by the first named author in \cite{oinftyclass}.\ Therefore, we aim to obtain an equivalent characterisation for inclusions covered by Theorem \ref{thm: purelyinfrr0}.

\begin{remark}
From the proof of Theorem \ref{thm: purelyinfrr0}, we see that whenever $\overline{aBa}$ is not unital i.e. $0\in\sigma(a)$ is not isolated, then the approximate unit of projections $(p_n)_n$ in $\overline{aBa}$ may be chosen to be increasing, and such that each $p_n$ is full, properly infinite and with $[p_n]_0=0$ in $K_0(B)$.

In fact, the projection $p$ in the proof is of the form  $p=1-vv^\ast$ for an isometry $v$ in the unitisation of $\overline{aBa}$, and hence $[p]_0=0$ in $K_0(B)$.
\end{remark}

\begin{cor}\label{cor: equivalencepurelyinf}
Let $A\subseteq B$ be an inclusion of $\C$-algebras.\ Then the following are equivalent:
\begin{enumerate}
    \item every nonzero positive element in $A$ is full and properly infinite in $B$;
    \item for any nonzero positive element $a\in A$, $\overline{aBa}$ has a full properly infinite projection.
\end{enumerate}
\end{cor}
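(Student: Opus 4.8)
The plan is to prove the two implications separately, with the forward direction $(i)\implies(ii)$ being essentially immediate from the already-established Theorem \ref{thm: purelyinfrr0}, and the reverse direction $(ii)\implies(i)$ being the substantive content. For $(i)\implies(ii)$, I would simply invoke Theorem \ref{thm: purelyinfrr0}: if every nonzero positive $a\in A$ is full and properly infinite in $B$, then $\overline{aBa}$ has an increasing approximate unit of full, properly infinite projections, and any single member of such a unit gives the desired projection in (ii).

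For the reverse direction $(ii)\implies(i)$, let $a\in A_+$ be nonzero; I must show $a$ is both full and properly infinite in $B$. By hypothesis there is a full, properly infinite projection $p\in\overline{aBa}$. The key observation is that $p$ lies in the hereditary subalgebra generated by $a$, so $p\preceq a$ (Cuntz subequivalence within $\overline{aBa}$). Fullness of $a$ then follows quickly: since $p$ is full in $B$ and $p\preceq a$, the closed two-sided ideal generated by $a$ contains that generated by $p$, which is all of $B$, so $a$ is full. The main work is proper infiniteness. Here I would exploit that $a$ itself is Cuntz \emph{equivalent} to a positive element dominating a full properly infinite projection, and run the standard argument that an element dominating (in the Cuntz sense) a properly infinite full projection inherits proper infiniteness.

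More concretely, the strategy for proper infiniteness of $a$ is as follows. Since $p\in\overline{aBa}$ is properly infinite, $p\oplus p\preceq p$. I want $a\oplus a\preceq a$. The idea is to compare $a$ with $p$ in both directions: on one hand $p\preceq a$ automatically since $p\in\overline{aBa}$; on the other hand, I would like $a\preceq p$ as well, which would give Cuntz equivalence $a\sim p$ and then $a\oplus a\sim p\oplus p\preceq p\sim a$. The subequivalence $a\preceq p$ should follow from the fact that $p$ is a \emph{full} properly infinite projection sitting inside $\overline{aBa}$: any element of a hereditary subalgebra in which $p$ is full is Cuntz subequivalent to $p$, which is the content of results like \cite[Proposition 3.5]{purelyinfKR} on full properly infinite elements absorbing everything in the ideal they generate. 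Thus the key step reduces to showing $a$ is full and properly infinite inside $\overline{aBa}$ itself, using that $\overline{aBa}$ contains the full properly infinite projection $p$.

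I expect the \textbf{main obstacle} to be establishing the subequivalence $a\preceq p$ rigorously, i.e.\ transferring fullness of $p$ \emph{in all of $B$} to the statement that $a$ is dominated by $p$ \emph{within the relevant hereditary subalgebra}. One must be careful that fullness in $B$ is an ideal-theoretic condition, while Cuntz subequivalence is an approximation condition, and the bridge between them for properly infinite elements is exactly \cite[Proposition 3.5]{purelyinfKR}: a full, properly infinite positive element $b$ satisfies $c\preceq b$ for every $c$ in the (closed, two-sided) ideal it generates. Applying this with $b=p$ and noting $a$ lies in the ideal generated by $p$ (since $p$ is full in $B$, this ideal is $B$, so certainly contains $a$) yields $a\preceq p$, and then $p\preceq a$ together with $p$ properly infinite closes the argument via $a\oplus a\sim p\oplus p\preceq p\sim a$. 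The only remaining care is to record that proper infiniteness is a Cuntz-equivalence invariant for positive elements, which is standard.
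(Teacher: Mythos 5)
Your proposal is correct and follows essentially the same route as the paper: the forward direction is immediate from Theorem \ref{thm: purelyinfrr0}, and the reverse direction combines $p\preceq a$ (from $p\in\overline{aBa}$) with $a\preceq p$ (from \cite[Proposition 3.5]{purelyinfKR}, since $a$ lies in the ideal generated by the full properly infinite projection $p$), then concludes via the chain $a\oplus a\preceq p\oplus p\preceq p\preceq a$. The paper's write-up is just a more compact version of exactly this argument, so the obstacle you flagged is resolved precisely the way you anticipated.
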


\begin{proof}
Theorem \ref{thm: purelyinfrr0} shows that (i) implies (ii), so it remains to show that (ii) implies (i).\ Let $a\in A$ be a nonzero positive element and let $p\in\overline{aBa}$ be a full properly infinite projection.\ Then, $a$ is in the ideal generated by $p$, so $a\preceq p$ ({\cite[Proposition 3.5]{purelyinfKR}}).\ Since $p\in\overline{aBa}$, it also follows that $p\preceq a$ ({\cite[Proposition 2.7 (i)]{purelyinfKR}}), so $a$ is full. Moreover, $$a\oplus a\preceq p\oplus p\preceq p\preceq a,$$ which implies that $a$ is properly infinite.
\end{proof}

\begin{remark}
Inclusions with infinite behaviour of this type have been essential in proving that various natural constructions are purely infinite.\ For example, in {\cite[Theorem 3.3]{purelyinfcrossedproducts}}, Rørdam and Sierakowski show that, under some assumptions, the crossed product $A\rtimes_r G$ is purely infinite if and only if any nonzero positive element in $A$ is properly infinite in $A\rtimes_r G$.\ Similarly, in \cite{purelyinfgroupoids}, Bönicke and Li study when a groupoid $\C$-algebra is purely infinite.\ If $\mathcal{G}$ is a groupoid with unit space $\mathcal{G}^{(0)}$, they use the canonical inclusion $C_0(\mathcal{G}^{(0)})\subseteq \C_r(\mathcal{G})$ to describe when $\C_r(\mathcal{G})$ is purely infinite (see \cite[Theorem B]{purelyinfgroupoids}).
\end{remark}

Assuming nuclearity and $\mathcal{O}_\infty$-stability, we can strenghten the result in Theorem \ref{thm: purelyinfrr0} and show that $^*$-homomorphisms classified by the first named author in \cite{oinftyclass} are approximately factoring through a $\C$-algebra of real rank zero. In many cases of interest, this algebra can be assumed to be a Kirchberg algebra. 

First, let us record some definitions. Let $A$ and $B$ be $\C$-algebras with $A$ separable and let $\theta: A \to B$ be a $^*$-homomorphism.\ Then, following {\cite[Definition 3.16]{O2stableclassif}}, $\theta$ is said to be \emph{$\mathcal{O}_\infty$-stable} if $\mathcal{O}_\infty$ embeds unitally in $B_\infty \cap \theta(A)'/\Ann(\theta(A))$.\footnote{Note that $\Ann(\theta(A))=\{b\in B_\infty: b\theta(A)+\theta(A)b=\{0\}\}$.} Two $^*$-homomorphisms $\phi,\psi:A\to B$, with $A$ separable, are \emph{approximately Murray-von Neumann equivalent} if  there exists $v \in B_\infty$ such that $v^*\phi(a)v = \psi(a)$ and $v\psi(a)v^* = \phi(a)$ for all $a \in A$.\ 

In \cite{oinftyclass}, full, nuclear, $\mathcal{O}_\infty$-stable $^*$-homomorphisms are shown to be classified up to approximate Murray-von Neumann equivalence by KL$_{\nuc}$ (see \cite{Skandalis} or \cite[Section 8.1]{oinftyclass} for details on KL$_{\nuc}$ and KK$_{\nuc}$).\ Moreover, if $\theta:A\to B$ is a full $\mathcal{O}_\infty$-stable $^*$-homomorphism, then any nonzero and positive element $\theta(a)$ is properly infinite in $B$ (\cite[Lemma 4.4]{BGSW22}).\ Thus, the inclusion $\theta(A)\subseteq B$ has real rank zero by Theorem \ref{thm: purelyinfrr0}.\ In fact, under some mild additional assumption, we can show that $\theta$ is equivalent to a $^*$-homomorphism which factors through a $\C$-algebra of real rank zero.

\begin{theorem}\label{thm: factKirch}
Let $A$ be a separable exact $\C$-algebra, $B$ be a $\sigma$-unital $\C$-algebra and $\theta:A\to B$ be a full, nuclear, $\mathcal{O}_\infty$-stable $^*$-homomorphism.\ Then $\theta$ is approximately Murray-von Neumann equivalent to a $^*$-homo-morphism which factors through a separable exact $\C$-algebra $C$ with real rank zero.\ Moreover, if $A$ is $\KK$-equivalent to a nuclear $\C$-algebra,\footnote{For example if $A$ is nuclear or if it satisfies the $\UCT$.
} then $C$ can be taken to be a Kirchberg algebra.\footnote{A simple, separable, nuclear, purely infinite $\C$-algebra is called a Kirchberg algebra.}
\end{theorem}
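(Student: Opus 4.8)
The plan is to leverage the classification machinery of \cite{oinftyclass}: since $\theta$ is full, nuclear, and $\mathcal{O}_\infty$-stable, it is classified up to approximate Murray--von Neumann equivalence by its class in $\KK_{\nuc}(A,B)$. The strategy is therefore to \emph{produce a factorisation} $\theta \sim_{\mathrm{MvN}} \phi \circ \psi$ through an auxiliary algebra $C$ of real rank zero, and to do so it suffices to find $C$ together with maps $\psi \colon A \to C$ and $\phi \colon C \to B$ whose composite realises the correct $\KK_{\nuc}$-class, with $\phi \circ \psi$ itself full, nuclear and $\mathcal{O}_\infty$-stable so that the classification theorem applies and forces approximate Murray--von Neumann equivalence with $\theta$. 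The real rank zero of the inclusion $\phi(C) \subseteq B$ would then follow from Lemma \ref{lemma: factRR0}, but the stronger conclusion we want here is that $\theta$ \emph{itself} is equivalent to such a factoring map.

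First I would construct a candidate intermediate algebra. The natural choice is to tensor with a suitable strongly self-absorbing building block: replace $A$ by $A \otimes \mathcal{O}_\infty$ (which does not change the $\KK_{\nuc}$-class up to the canonical embedding, since $\mathcal{O}_\infty$ is $\KK$-equivalent to $\mathbb{C}$) and then absorb a UHF or AF-type algebra to manufacture real rank zero. Concretely, I expect $C$ to be of the form $A \otimes \mathcal{O}_\infty \otimes \mathcal{Z}_0$ or an analogous construction designed so that $C$ has real rank zero while remaining separable and exact, and while keeping the inclusion $\iota \colon A \to C$ in the correct $\KK_{\nuc}$-class. The key point is that $C$ must be chosen so that there exists a full, nuclear, $\mathcal{O}_\infty$-stable map $\phi \colon C \to B$ with $[\phi \circ \iota]_{\KK_{\nuc}} = [\theta]_{\KK_{\nuc}}$; existence of such a $\phi$ is where the existence half of the classification theorem of \cite{oinftyclass} enters, realising any prescribed $\KK_{\nuc}$-class by a full nuclear $\mathcal{O}_\infty$-stable homomorphism out of the exact separable algebra $C$.

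Next I would verify the hypotheses that make the classification applicable and give real rank zero of $C$. Fullness and $\mathcal{O}_\infty$-stability of $\phi \circ \iota$ should be inherited from the construction (fullness is stable under the tensorial enlargement, and $\mathcal{O}_\infty$-stability is built in via the $\mathcal{O}_\infty$ factor), and nuclearity of the composite follows from nuclearity of $\phi$ together with exactness bookkeeping. That $C$ has real rank zero is the design goal of the tensor-factor choice and should reduce to a known permanence result (e.g. $\mathcal{O}_\infty$-stable purely infinite algebras have real rank zero by the relative Zhang theorem, Theorem \ref{thm: purelyinfrr0}, applied to $C \subseteq C$, or by \cite{zhangRR0} directly if $C$ is a Kirchberg algebra). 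For the moreover clause, when $A$ is $\KK$-equivalent to a nuclear algebra one gains enough UCT-type control to arrange $C$ to be simple, separable, nuclear, and purely infinite --- i.e. a Kirchberg algebra --- by choosing the building block so that $C$ is itself a Kirchberg algebra, whose real rank zero is then immediate from Zhang's theorem.

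The main obstacle I anticipate is \emph{simultaneously} matching the $\KK_{\nuc}$-invariant and securing real rank zero of the intermediate algebra: one must choose $C$ rich enough (absorbing an AF- or $\mathcal{Z}_0$-type factor) to force real rank zero, yet constrained enough that a full nuclear $\mathcal{O}_\infty$-stable map $\phi \colon C \to B$ recovering $[\theta]$ genuinely exists, which requires the invariant of $C$ to surject appropriately onto that of $B$ under the classification. Ensuring that the enlargement $A \to C$ does not destroy the $\KK_{\nuc}$-class while the composite back into $B$ lands in the right class --- and that all three maps remain in the classifiable class at every stage --- is the delicate bookkeeping. The Kirchberg-algebra refinement adds the further subtlety that simplicity of $C$ must be arranged compatibly with the UCT/$\KK$-equivalence hypothesis on $A$, so that the relevant classification of Kirchberg algebras (via the Kirchberg--Phillips theorem) can be invoked to realise $C$ and the factoring maps.
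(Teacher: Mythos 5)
Your high-level skeleton -- find $C$ together with maps $\psi\colon A\to C$ and $\phi\colon C\to B$ whose composite realises $\KK_{\nuc}(\theta)$, get $\phi$ from the existence half of the classification in \cite{oinftyclass}, and conclude $\theta\sim\phi\circ\psi$ from the uniqueness half -- is exactly the paper's strategy. The genuine gap is in your construction of $C$: no algebra of the form $A\otimes\mathcal{O}_\infty\otimes D$ (with $D$ UHF, AF, $\mathcal{Z}_0$, or any other simple auxiliary algebra) has real rank zero in general, because tensoring does not alter the ideal/connectedness obstruction coming from $A$. Concretely, take $A=C([0,1])$, which satisfies all hypotheses of the theorem (any unital embedding $C([0,1])\hookrightarrow\mathcal{O}_2$ is full, nuclear and $\mathcal{O}_\infty$-stable). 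For \emph{any} $\C$-algebra $E$ one has $C([0,1])\otimes E\cong C([0,1],E)$, and the ideal $J=\{f : f|_{[1/2,1]}=0\}$ contains no nonzero projection: for a projection-valued continuous function $p$ on the connected space $[0,1]$, the norm $\|p(t)\|$ is a continuous $\{0,1\}$-valued function, hence constant, so $p$ vanishing on $[1/2,1]$ forces $p=0$. If $C([0,1],E)$ had real rank zero, the hereditary subalgebra $J$ would have an approximate unit of projections (Theorem \ref{thm: RR0Alg}), forcing $J=0$, a contradiction. So your design goal ``absorb a UHF or AF-type factor to manufacture real rank zero'' is unachievable: real rank zero of the intermediate algebra requires destroying the ideal structure of $A$, i.e.\ changing the algebra itself, not tensorially enlarging it. (A secondary issue: you also need your embedding $A\to C$ to be a $\KK$-equivalence in order to invert its class and write down the required class for $\phi$; with a $\mathcal{Z}_0$-type factor this is a further unverified claim.)

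The missing idea, which is the heart of the paper's proof, is the Kumjian--Pimsner construction: by \cite[Theorem 3.1]{KumjianKKeqInclusion} (building on \cite{pimsner}, with exactness preserved by \cite{exactnessCuntzPimsner}) every separable exact $A$ admits an embedding $\iota\colon A\hookrightarrow C$ into a \emph{simple}, separable, exact, purely infinite $\C$-algebra $C$ such that $\iota$ is a $\KK$-equivalence. Simplicity is precisely what a tensor construction can never produce when $A$ is non-simple, and simplicity plus pure infiniteness yields real rank zero by Zhang's theorem \cite{zhangRR0}. From there the paper runs your argument verbatim: since $\KK_{\nuc}(\theta)\circ\KK(\iota)^{-1}\in\KK_{\nuc}(C,B)$, the existence theorem \cite[Theorem A]{oinftyclass} provides a full, nuclear, $\mathcal{O}_\infty$-stable $\psi\colon C\to B$ with $\KK_{\nuc}(\psi\circ\iota)=\KK_{\nuc}(\theta)$, and the uniqueness theorem \cite[Theorem 8.10]{oinftyclass} gives that $\theta$ and $\psi\circ\iota$ are approximately Murray--von Neumann equivalent; for the Kirchberg refinement one applies Kumjian's theorem to the nuclear algebra $\KK$-equivalent to $A$ and uses existence twice. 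Note also that your anticipated obstacle about the invariant of $C$ needing to ``surject appropriately onto that of $B$'' is not a real constraint: the existence theorem realises an \emph{arbitrary} prescribed class in $\KK_{\nuc}(C,B)$, so no surjectivity is needed.
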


\begin{proof}
Building on the work of Pimsner in \cite{pimsner}, Kumjian showed in \cite{KumjianKKeqInclusion} that there exists a simple separable purely infinite $\C$-algebra $C$ and an inclusion $\iota:A\to C$ such that $\iota$ is a $\KK$-equivalence.\ The existence of such $C$ is proved in \cite[Theorem 3.1]{KumjianKKeqInclusion} (note that nuclearity is only needed to obtain nuclearity of $C$), while the fact that $\iota$ induces a $\KK$-equivalence follows from {\cite[Corollary 4.5]{pimsner}}.\ Moreover, since $A$ is exact, $C$ can also be chosen to be exact by {\cite[Corollary 3.6]{exactnessCuntzPimsner}}. 

Since the product of a $\KK$-element with a $\KK_{\nuc}$-element is in $\KK_{\nuc}$ (\cite[Proposition 2.2(b)]{Skandalis}), the product $\KK_{\nuc}(\theta)\circ \KK(\iota)^{-1}$ is an element of $\KK_{\nuc}(C,B)$.\ Then, there exists a full, nuclear, $\mathcal{O}_{\infty}$-stable $^*$-homomorphism $\psi:C\to B$ such that $\KK_{\nuc}(\psi)=\KK_{\nuc}(\theta)\circ \KK(\iota)^{-1}$({\cite[Theorem $A$]{oinftyclass}}).\ Now we apply the uniqueness part of the classification of full, nuclear, $\mathcal{O}_{\infty}$-stable $^*$-homomorphisms.\ Precisely, since $\KK_{\nuc}(\psi\circ\iota)=\KK_{\nuc}(\theta)$, $\theta$ and $\psi\circ\iota$ are approximately Murray-von Neumann equivalent ({\cite[Theorem 8.10]{oinftyclass}}). The fact that $C$ has real rank zero follows from \cite{zhangRR0} (alternatively apply Theorem \ref{thm: purelyinfrr0} to the inclusion $C\subseteq C$).

Suppose now that $A$ is $\KK$-equivalent to a separable nuclear $\C$-algebra $D$. Let $x\in \KK(A,D)$ be an element inducing the equivalence. Note that since $D$ is nuclear, $\KK_{\nuc}(A,D)=\KK(A,D)$.\ Applying {\cite[Theorem 3.1]{KumjianKKeqInclusion}} to the $\C$-algebra $D$, there exists a simple separable purely infinite $\C$-algebra $C$ and an inclusion $\iota:D\to C$ such that $\iota$ is a $\KK$-equivalence. Since $D$ is nuclear, it follows from {\cite[Theorem 3.1]{KumjianKKeqInclusion}} that we can further assume that $C$ is nuclear, so $C$ is a Kirchberg algebra.

Then, there exists a full, nuclear, $\mathcal{O}_{\infty}$-stable $^*$-homomorphism $\phi:A\to C$ such that $$\KK_{\nuc}(\phi)=\KK_{\nuc}(\iota)\circ x$$ ({\cite[Theorem $A$]{oinftyclass}}).\ Applying existence again, there exists a full, nuclear, $\mathcal{O}_{\infty}$-stable $^*$-homomorphism $\psi:C\to B$ such that $$\KK_{\nuc}(\psi)=\KK_{\nuc}(\theta)\circ x^{-1}\circ \KK_{\nuc}(\iota)^{-1}.$$  Since $\KK_{\nuc}(\psi\circ\phi)=\KK_{\nuc}(\theta)$, $\theta$ and $\psi\circ\phi$ are approximately Murray-von Neumann equivalent by {\cite[Theorem 8.10]{oinftyclass}}.\ Hence, $\theta$ is approximately Murray-von Neumann equivalent to a $^*$-homomorphism which factors through a Kirchberg algebra.
\end{proof}

Let us now examine some particular inclusions arising from dynamics that can be shown to have real rank zero.\ In fact, they factor exactly through $\C$-algebras of real rank zero.

\begin{theorem}\label{roealgebraex}
Let $G$ be a countable, discrete, exact group.\ If we consider the action of $G$ on $\ell^\infty(G)$ by left translation, then the canonical inclusion $\C_r(G)\subseteq\ell^{\infty}(G)\rtimes_r G$ has real rank zero.\ In fact, it factors through a $\C$-algebra of real rank zero. 
\end{theorem}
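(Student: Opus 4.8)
The plan is to produce an intermediate $\C$-algebra $C$ with real rank zero sitting between $\C_r(G)$ and $\ell^\infty(G)\rtimes_r G$, and then invoke Lemma \ref{lemma: domainorcodRR0}. The natural candidate for $C$ is the reduced crossed product $c(G)\rtimes_r G$, where $c(G)\subseteq\ell^\infty(G)$ is some $G$-invariant commutative $\C$-subalgebra whose spectrum is totally disconnected and which is large enough to contain the constants (so that $\C_r(G)\subseteq c(G)\rtimes_r G$). Since $\ell^\infty(G)$ itself has totally disconnected spectrum $\beta G$, the cleanest choice is simply $C=\ell^\infty(G)\rtimes_r G$ (the whole codomain) and to show directly that this crossed product has real rank zero; alternatively one exhibits a proper intermediate algebra. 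First I would recall that $\ell^\infty(G)$ is an AF-algebra (indeed a von Neumann algebra, so it has real rank zero), with $G$ acting by left translation.

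The key technical input is a result guaranteeing that the reduced crossed product of a real-rank-zero commutative algebra by a (nice) action retains real rank zero. For actions of exact groups on totally disconnected spaces there are such results in the literature; the relevant mechanism is that $\ell^\infty(G)\rtimes_r G$ for the translation action is known to be an algebra with a rich supply of projections. First I would use that the translation action of $G$ on $\beta G$ (the spectrum of $\ell^\infty(G)$) is \emph{amenable} precisely when $G$ is exact, which places us in the setting where the crossed product is nuclear and well-behaved. Then the strategy is to show that $\ell^\infty(G)\rtimes_r G$ has real rank zero directly: exactness of $G$ gives amenability of the boundary action, and one can approximate self-adjoint elements of the crossed product by self-adjoints with finite spectrum using the AF structure of $\ell^\infty(G)$ together with a conditional-expectation/averaging argument onto the diagonal.

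Concretely, the core step is to establish condition (ii) of Theorem \ref{thm: eqdefnonunital} for the inclusion $\C_r(G)\subseteq \ell^\infty(G)\rtimes_r G$: every self-adjoint element $a\in\C_r(G)$ can be approximated by a self-adjoint element of finite spectrum in the crossed product. Since $a$ lies in $\C_r(G)$, which is singly generated over the group elements, I would first approximate $a$ by a finitely supported sum $\sum_{g\in F}\lambda_g u_g$, then cut down by projections from $\ell^\infty(G)$ coming from a finite partition of $G$ into sets on which the translates $gF$ behave controllably. Exactness enters to ensure these cut-downs do not destroy the reduced-crossed-product norm estimates. Assembling the cut pieces yields a self-adjoint element supported on a finite-dimensional corner, hence of finite spectrum, close to $a$.

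The main obstacle I anticipate is precisely this norm control in the \emph{reduced} crossed product: while $\ell^\infty(G)$ trivially supplies many projections, multiplying a projection $p\in\ell^\infty(G)$ against a unitary $u_g$ and summing does not automatically produce an element close to $a$ in reduced norm, and the reduced norm is hard to estimate directly. This is exactly where exactness of $G$ is indispensable—it is what allows finite-dimensional approximation of the crossed product (or equivalently, nuclearity of the boundary crossed product $\C(\beta G)\rtimes_r G$) and hence the construction of a suitable approximating finite-dimensional subalgebra. I would therefore expect the heart of the argument to be an explicit construction of a totally disconnected $G$-invariant intermediate algebra together with a verification, using exactness, that the associated crossed product still has real rank zero, most likely by appealing to a permanence result for crossed products of AF-algebras by exact groups or by a direct Følner/partition-of-unity estimate, and the footnote's caveat that real rank zero is rare for such algebras signals that the constancy of this phenomenon hinges delicately on the translation action rather than a generic action.
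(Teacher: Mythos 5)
Your central strategy has a fatal flaw: you propose, as the ``cleanest choice,'' to show that $\ell^{\infty}(G)\rtimes_r G$ itself has real rank zero. This is false for many exact groups. The uniform Roe algebra of $SL(n,\mathbb{Z})$ does not have real rank zero for $n\geq 3$, and neither does that of $\mathbb{F}_2\times\mathbb{Z}^2$; the paper emphasises exactly this point (it is the reason the theorem is interesting), and the remark following the theorem exhibits $G=\mathbb{F}_2\times\mathbb{Z}^2$ as a case where the inclusion has real rank zero while neither $\C_r(G)$ nor $\ell^{\infty}(G)\rtimes_r G$ does. Your fallback --- an intermediate algebra $c(G)\rtimes_r G$ with $c(G)\subseteq\ell^{\infty}(G)$ totally disconnected --- suffers from the same problem: total disconnectedness of the spectrum of the coefficient algebra does \emph{not} imply real rank zero of the reduced crossed product ($\ell^{\infty}(G)$ itself has totally disconnected spectrum $\beta G$), so the ``key technical input'' you invoke, a permanence result for crossed products of commutative real-rank-zero algebras by exact groups, does not exist. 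Likewise, your direct approximation sketch conflates nuclearity (cpc factorisations through finite-dimensional algebras, which exactness does provide) with the ability to produce \emph{finite-spectrum} self-adjoint approximants; cpc maps do not preserve spectra, and this is precisely the gap between nuclearity and nuclear dimension zero of a map (Proposition \ref{prop: NucDimvsRR0}).

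What the actual proof needs, and what your proposal is missing, is a pair of genuinely dynamical inputs that produce a \emph{specific} action whose crossed product has real rank zero, through which the inclusion can be factored. For $G$ non-amenable (and exact), Rørdam--Sierakowski construct an action $\alpha$ of $G$ on the Cantor space $X$ such that $C(X)\rtimes_{\alpha,r}G$ is a Kirchberg algebra, hence of real rank zero by Zhang's theorem. For $G$ amenable, Suzuki produces a free minimal action on a totally disconnected compact space whose transformation groupoid is \emph{almost finite}, and the theorem of Ara--B\"onicke--Bosa--Li then gives real rank zero --- almost finiteness, not total disconnectedness alone, is the decisive hypothesis. In either case one then fixes $x\in X$ and uses the orbit map $f\mapsto\bigl(f(\alpha_g(x))\bigr)_{g\in G}$ to get an equivariant $^*$-homomorphism $C(X)\to\ell^{\infty}(G)$, inducing $C(X)\rtimes_{\alpha,r}G\to\ell^{\infty}(G)\rtimes_r G$ whose composition with $\C_r(G)\to C(X)\rtimes_{\alpha,r}G$ is the canonical inclusion; Lemmas \ref{lemma: domainorcodRR0} and \ref{lemma: CompHomsRR0} finish the argument. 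Note that this factors the \emph{map} rather than exhibiting an intermediate subalgebra, so no injectivity of the induced map on crossed products is needed. Without these two case-specific constructions your outline cannot be completed.
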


\begin{proof}
Let us denote this inclusion by $\iota$. If the group is non-amenable, there is an action $\alpha$ of $G$ on the Cantor space $X$ such that $C(X)\rtimes_{\alpha,r} G$ is a Kirchberg algebra ({\cite[Theorem 6.11]{purelyinfcrossedproducts}}). Since the $\C$-algebra $C(X)\rtimes_{\alpha,r} G$ is simple and purely infinite, it has real rank zero by \cite{zhangRR0}. 

Suppose now that the group $G$ is amenable. Then, combining \cite[Theorem 5.4]{Suzuki} and \cite[Lemma 5.2]{Suzuki}, there exists a totally disconnected compact Hausdorff space $X$ and a free minimal action $\alpha$ of $G$ on $X$ such that the transformation groupoid $X\rtimes_\alpha G$ is almost finite.\footnote{All groupoids in \cite{Suzuki} are assumed to be Hausdorff.} Since the groupoid is also minimal and has totally disconnected unit space, the crossed product $C(X)\rtimes_\alpha G$ has real rank zero by \cite[Theorem 3.2]{ABBL20}.\footnote{Note that in \cite{ABBL20}, minimal almost finite groupoids are assumed to be ample with compact unit space. In this case, ampleness is ensured by the fact that $X$ is totally disconnected and compact. Moreover, the definitions of almost finiteness in \cite{ABBL20} and \cite{Suzuki} coincide, as observed in the comments following \cite[Remark 2.1]{ABBL20}.}

We will now combine the two cases above. Let $X$ be a compact Hausdorff space and an action $\alpha$ of $G$ on $X$ such that the crossed product $C(X)\rtimes_\alpha G$ has real rank zero. Fix $x\in X$ and let $C(X)\to \ell^{\infty}(G)$ be the equivariant $^*$-homomorphism given by $f\mapsto \left(f(\alpha_g(x))\right)_{g\in G}$.\ This induces a unital $^*$-homomorphism $C(X) \rtimes_{\alpha,r} G \to \ell^{\infty}(G) \rtimes_r G$. 

It is readily seen that in both cases, the composition
\begin{equation}
\C_r(G) \to C(X) \rtimes_{\alpha,r} G \to \ell^{\infty}(G) \rtimes_r G
\end{equation}
is the canonical inclusion $\iota$. Hence, the inclusion $\C_r(G)\subseteq\ell^{\infty}(G)\rtimes_r G$ factors through a $\C$-algebra of real rank zero. 
\end{proof}

\begin{remark}
Note that the inclusion $\C_r(G)\subseteq\ell^{\infty}(G)\rtimes_r G$ has real rank zero even if neither $\C_r(G)$ nor $\ell^{\infty}(G)\rtimes_r G$ have real rank zero. For example, take $G=\mathbb{F}_2\times \mathbb{Z}^2$ and note that its uniform Roe algebra $\ell^{\infty}(G)\rtimes G$ does not have real rank zero by {\cite[Corollary 3.5]{RRRoeAlg}}. Moreover, $\C_r(G)\cong \C_r(\mathbb{F}_2)\otimes C(\mathbb{T}^2)$, which does not have real rank zero either, since its primitive ideal space is $\mathbb{T}^2$ (by simplicity of $\C_r(\mathbb F_2)$ shown in \cite{Powers}), which is not totally disconnected.
\end{remark}

Another natural inclusion from dynamics is realised by embedding the reduced group $\C$-algebra into the reduced crossed product of $C(\partial_F G)$ by $G$. Here $\partial_F G$ denotes the Furstenberg boundary of $G$ and we refer the reader to \cite{furstenbergboundary} for a detailed description. 

\begin{theorem}\label{furstbdex}
Let $G$ be a countable, discrete, non-elementary hyperbolic group with no nontrivial finite normal subgroups. Then the canonical inclusion $\C_r(G)\subseteq C(\partial_F G) \rtimes_r G$ has real rank zero.\ In fact, it factors through a $\C$-algebra of real rank zero.     
\end{theorem}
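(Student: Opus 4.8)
The plan is to mirror the strategy used for Theorem~\ref{roealgebraex}: exhibit an intermediate crossed product that is a Kirchberg algebra, hence of real rank zero by Zhang's theorem, through which the canonical inclusion $\iota\colon\C_r(G)\hookrightarrow C(\partial_F G)\rtimes_r G$ factors. The natural candidate is the crossed product $C(\partial G)\rtimes_r G$ associated to the action of $G$ on its Gromov boundary $\partial G$. Since $G$ is finitely generated and $\partial G$ is compact metrizable, this algebra is separable.

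First I would record that $C(\partial G)\rtimes_r G$ is a Kirchberg algebra. Because $G$ is hyperbolic, the boundary action of $G$ on $\partial G$ is topologically amenable (Adams), so $C(\partial G)\rtimes_r G$ is nuclear and coincides with the full crossed product. As $G$ is non-elementary, the boundary action is a minimal strong boundary action in the sense of Laca--Spielberg, and the hypothesis that $G$ has no nontrivial finite normal subgroup guarantees that the action is topologically free. Consequently $C(\partial G)\rtimes_r G$ is simple and purely infinite, hence a Kirchberg algebra, and in particular has real rank zero.

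Next I would produce the factorisation. The action of $G$ on $\partial G$ is minimal and strongly proximal, so $\partial G$ is a $G$-boundary, and the universal property of the Furstenberg boundary yields a $G$-equivariant continuous surjection $p\colon\partial_F G\to\partial G$. Pulling back along $p$ gives a $G$-equivariant unital inclusion $p^*\colon C(\partial G)\hookrightarrow C(\partial_F G)$; since $G$ is exact, this induces a $^*$-homomorphism $\phi\colon C(\partial G)\rtimes_r G\to C(\partial_F G)\rtimes_r G$ which restricts to the identity on the canonical copy of $\C_r(G)$. Letting $\psi\colon\C_r(G)\to C(\partial G)\rtimes_r G$ be the canonical inclusion coming from the unital embedding $\mathbb{C}\hookrightarrow C(\partial G)$, the composite $\phi\circ\psi$ agrees with $\iota$ on the unitaries generating $\C_r(G)$ and hence equals $\iota$. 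Since $C(\partial G)\rtimes_r G$ has real rank zero, the inclusion $\psi(\C_r(G))\subseteq C(\partial G)\rtimes_r G$ has real rank zero by Lemma~\ref{lemma: domainorcodRR0}, so $\psi$ has real rank zero; Lemma~\ref{lemma: CompHomsRR0} then shows $\iota=\phi\circ\psi$ has real rank zero. This simultaneously establishes that the inclusion has real rank zero and that it factors through the real-rank-zero algebra $C(\partial G)\rtimes_r G$.

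The main obstacle is assembling the correct structural input on the boundary action, namely verifying that $C(\partial G)\rtimes_r G$ is genuinely simple and purely infinite under the stated hypotheses. Amenability and the strong-boundary-action property are standard for non-elementary hyperbolic groups, but topological freeness is the delicate point, since torsion elements may have large fixed-point sets on $\partial G$; it is precisely here that the absence of nontrivial finite normal subgroups is used, to rule out a nontrivial kernel and force the action to be topologically free. A secondary point to confirm is that $\partial G$ really is a $G$-boundary, so that the universal property of $\partial_F G$ supplies the equivariant surjection $p$; this uses minimality and strong proximality of the boundary action.
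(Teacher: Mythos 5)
Your overall strategy coincides with the paper's: factor the canonical inclusion as $\C_r(G)\to C(\partial G)\rtimes_r G\to C(\partial_F G)\rtimes_r G$ (the paper obtains the second map from the equivariant inclusion $C(\partial G)\subseteq C(\partial_F G)$ of \cite[Remark 5.6]{furstenbergboundary}, which is dual to the boundary surjection $\partial_F G\to\partial G$ you invoke), prove that $C(\partial G)\rtimes_r G$ is simple and purely infinite via Laca--Spielberg, and conclude real rank zero from Zhang's theorem. Your additional verification of nuclearity, to upgrade the intermediate algebra to a Kirchberg algebra, is harmless but unnecessary: simplicity and pure infiniteness already suffice.

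There is, however, a genuine gap at precisely the step you flag as delicate: topological freeness. Your proposed mechanism --- that the absence of nontrivial finite normal subgroups ``rules out a nontrivial kernel and forces the action to be topologically free'' --- does not work, because triviality of the kernel is just faithfulness, which is strictly weaker than topological freeness: a faithful action can still have a single nontrivial element whose fixed-point set has nonempty interior (this is exactly the worry you raise about torsion elements, and faithfulness does nothing to address it). The paper's argument is elementwise rather than kernel-based: since $G$ is non-elementary hyperbolic with no nontrivial finite normal subgroups, every nontrivial element of $G$ acts as a hyperbolic element on $\partial G$ (\cite[Proposition 8.28, Theorem 8.29]{HypGroups}), hence fixes exactly two points of $\partial G$ (\cite[Corollary 8.20]{HypGroups}); as $\partial G$ is infinite, every nontrivial element therefore has fixed-point set with empty interior, which is topological freeness in the sense of \cite[Definition 4]{LacaSpielberg}. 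Some control of this kind on the fixed-point sets of \emph{all} nontrivial elements, not merely on the kernel of the action, is what your proposal is missing; with that supplied, the rest of your argument goes through exactly as in the paper.
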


\begin{proof}
If we denote the Gromov boundary of $G$ by $\partial G$, then there is an equivariant inclusion $C(\partial G)\subseteq C(\partial_F G)$ ({\cite[Remark 5.6]{furstenbergboundary}}). Therefore, there is an induced unital $^*$-homomorphism $C(\partial G)\rtimes_r G\to C(\partial_F G)\rtimes_r G$. 
It is readily seen that the composition
\begin{equation}
\C_r(G) \to C(\partial G) \rtimes_r G \to C(\partial_F G) \rtimes_r G
\end{equation}
is the canonical inclusion $\C_r(G)\subseteq C(\partial_F G) \rtimes_r G$. 

We will now show that the crossed product $C(\partial G) \rtimes_r G$ is simple and purely infinite. Since the group $G$ is hyperbolic, the elements of $G$ are elliptic, parabolic, or hyperbolic (\cite[Section 8.3]{HypGroups}). Then, as $G$ is non-elementary and has no nontrivial finite normal subgroups, all elements of $G$ are hyperbolic (\cite[Proposition 8.28, Theorem 8.29]{HypGroups}).\footnote{An element $g\in G$ is called hyperbolic if for any $x\in \partial G$, the orbit $n\in\mathbb Z \mapsto g^nx$ is a quasi-geodesic (see \cite[Proposition 8.21]{HypGroups}).} Therefore, any element of $G$ fixes exactly two points of $\partial G$ (\cite[Corollary 8.20]{HypGroups}). Hence, the interior of fixed points is empty, so the action of $G$ on $\partial G$ is topologically free (\cite[Definition 4]{LacaSpielberg}). Moreover, by \cite[Example 2.1]{LacaSpielberg}, the action of $G$ on $\partial G$ is a strong boundary action (in the sense of \cite[Definition 1]{LacaSpielberg}). Hence, $C(\partial G) \rtimes_r G$ is simple and purely infinite by \cite[Theorem 5]{LacaSpielberg}. In particular, $C(\partial G) \rtimes_r G$ has real rank zero by \cite{zhangRR0}. Thus, the conclusion follows.
\end{proof}


\begin{remark}
We note what seems to be a persistent issue in the literature. In \cite[Proposition 3.2]{AnDel}, it is claimed that $C(\partial G) \rtimes_r G$ is simple and purely infinite if $G$ is a non-elementary hyperbolic group, while in \cite{LacaSpielberg}, this is claimed to be the case for any word hyperbolic group. If $C(\partial G) \rtimes_r G$ is simple, then $G$ is a $\C$-simple group (\cite[Theorem 1.5]{furstenbergboundary}). However, as it was pointed out to us by Julian Kranz, there are non-elementary hyperbolic groups with finite normal subgroups (e.g. $\mathbb{F}_2\times\mathbb{Z}_2)$ and in these cases, the group $G$ cannot be $\C$-simple (see for example \cite[Theorem 8.14]{DGO17}). Thus, the assumption that $G$ has no nontrivial finite normal subgroups is necessary to conclude that the crossed product $C(\partial G) \rtimes_r G$ is simple.
\end{remark}

\begin{remark}
It seems plausible to the authors that the inclusion $\C_r(G)\subseteq C(\partial_F G) \rtimes_r G$ has real rank zero for any countable, discrete, non-amenable group $G$.
\end{remark}

\bibliography{rr0}
\bibliographystyle{abbrv}
\end{document}